\newcounter{hours}\newcounter{minutes}
\newtheorem*{rep@theorem}{\rep@title}
\newcommand{\newreptheorem}[2]{%
\newenvironment{rep#1}[1]{%
 \def\rep@title{#2 \ref{##1}}%
 \begin{rep@theorem}}%
 {\end{rep@theorem}}}
\theoremstyle{theorem}
\newtheorem{definition}{Definition}[section]
\newtheorem{theorem}{Theorem}[section]
\newtheorem{lem}[theorem]{Lemma}
\newtheorem{lemma}[theorem]{Lemma}
\newtheorem{cor}[theorem]{Corollary}
\newtheorem{corollary}[theorem]{Corollary}
\newtheorem{proposition}[theorem]{Proposition}
\theoremstyle{definition}
\theoremstyle{remark}                  %% For unnumbered Remarks, etc.
\newtheorem{remark}[theorem]{Remark}
\def\A{{\mathcal A}}
\def\N{{\mathcal N}}
\def\R{{\mathbb R}}
\def\e{\varepsilon}
\def\Rn{\mathbb R^n}
\newcommand{\dist}{\operatorname*{dist}}
\newcommand{\abs}[1]{\left| #1 \right|}
\newcommand{\set}[1]{\left\{ {#1} \right\}}
\newcommand{\pth}[1]{\left( {#1} \right)}
\newcommand{\bra}[1]{\left[ {#1} \right]}
\newcommand{\norm}[1]{\lVert#1\rVert}
\definecolor{darkgreen}{rgb}{0,0.7,0}
\definecolor{grey}{rgb}{0.5,0.5,0.5}
\newcommand{\cl}[1]{\overline{#1}}
\newcommand{\dx}{\;dx}
\newcommand{\dt}{\;dt}
\newcommand{\dS}{\;dS}
\numberwithin{equation}{section}
\newcommand{\halfliminf}{\operatorname*{lim\,inf_*}}
\newcommand{\halflimsup}{\operatorname*{lim\,sup^*}}
\newcommand{\supers}{\overline{\mathcal{S}}}
\newcommand{\subs}{\underline{\mathcal{S}}}
\newcommand{\sol}{\mathcal{S}}
\newcommand{\strictordwrt}[1]{\prec_{#1}}
\newcommand{\notstrictordwrt}[1]{\not\prec_{#1}}
\newcommand{\trace}{\operatorname{trace}}
\newcommand{\interior}{\operatorname{int}}
\begin{document}
\title{Porous medium equation to Hele-Shaw flow with general initial density}
\author{Inwon Kim}
\address[I. Kim]{Department of Mathematics, UCLA, USA.}
\thanks{I. Kim was partially supported by NSF DMS-0970072. ikim@math.ucla.edu}
\author{Norbert Po\v{z}\'{a}r}
\address[N. Po\v{z}\'{a}r]{Faculty of Mathematics and Physics, Institute of Science and
Engineering, Kanazawa University, Japan.}
\thanks{N. Pozar was partially supported by JSPS KAKENHI Grant Number 26800068. npozar@se.kanazawa-u.ac.jp}
\date{}
\begin{abstract}
In this paper we study the ``stiff pressure limit" of the porous medium equation, where the initial density is a bounded, integrable function with a sufficient decay at infinity. Our particular model, introduced in \cite{PQV}, describes the growth of a tumor zone with a restriction on the maximal cell density. In a general context, this extends previous results of Caffarelli-Vazquez \cite{CV} and Kim \cite{K03} who restrict the initial data to be the characteristic function of a compact set.  In the limit a Hele-Shaw type problem is obtained, where the interface motion law reflects the acceleration effect of the presence of a positive cell density on the expansion of the maximal density (tumor) zone.

\end{abstract}
\maketitle

\section{Introduction}

In this paper we consider the following degenerate diffusion equation
\begin{equation}\label{pme}
\rho_t - \nabla\cdot(\rho \nabla p)  = \rho G(p) \qquad \text{ in } \R^n\times (0,\infty),
\end{equation}
where
\begin{align}
\label{Pm}
p = P_m(\rho) := \frac{m}{m-1}\rho^{m-1},
\end{align}
$G' <0$ and $G(p_M) = 0$ for some $p_M>0$. The model \eqref{pme} was introduced in \cite{PQV} as a
model problem which describes the growth of cancer cells, with focus on the mechanical  aspect of
the cell density motion. Here the pressure $p$ discourages the overgrowth of the cell density
$\rho$ over some critical density $\rho_c$, which is normalized here as $1$. In \cite{PQV} the
convergence of the solution $\rho$ of \eqref{pme} and the corresponding pressure variable $p$ was
studied in the {\it stiff pressure limit}, i.e., as $m\to\infty$, in the setting of the weak
solutions. In the model of a fluid flow, $m$ characterizes the compressibility of the fluid with $m
\to \infty$ representing the incompressible limit.
It is shown in \cite{PQV} in the $L^1$ setting that $\rho$ and $p$ converges to the limit functions $\rho_{\infty}$ and $p_{\infty}$, satisfying the following equations
\begin{equation}\label{1}
-\Delta p_{\infty} = G(p_{\infty})\hbox{ in } \Omega(t):=\{p_{\infty}(\cdot,t)>0\} =
\{\rho_{\infty}(\cdot, t)=1\},
\end{equation}
\begin{equation}\label{2}
(\rho_{\infty})_t - \nabla\cdot(\rho_{\infty}\nabla p_{\infty}) = \rho_{\infty}G(p_{\infty}) \hbox{ in } \R^n\times (0,\infty).
\end{equation}

\medskip

 We mention that, even at a formal level, it is not clear how to derive from \eqref{1}--\eqref{2} the velocity law of the free boundary of the tumor region, $\partial\{\rho_{\infty}=1\}$. In \cite{PQV} it was conjectured  that the normal velocity law \eqref{boundary} holds for general solutions. This is what we prove, along with the uniform convergence of the density variable away from the boundary of the tumor region. Roughly speaking we will show the following (see Theorem~\ref{main} below for the precise statements).

 \begin{itemize}
\item[(a)]As $m\to\infty$,  $\rho_m$ uniformly converges to $1$ inside $\Omega(t)$ and to $\rho_0e^{G(0)t}$ outside of $\overline{\Omega(t)}$,
\item[(b)] $\overline{\{\rho_{\infty}=1\}}$ equals the closure of $\cup_{t>0}(\Omega(t)\times\{t\})$,
\item[(c)] the set  $\Omega(t)$ evolves with the normal boundary velocity (in the viscosity solutions sense)
 \begin{equation}\label{boundary}
 V= \frac{|\nabla p_{\infty}|}{1-\min[1,\rho_0 e^{G(0)t}]} \quad \hbox{ on } \partial\Omega(t).
 \end{equation}
\end{itemize}

Note that (a) and (b) above imply that $\rho_0 e^{G(0) t}\leq 1$ outside of $\Omega(t)$, and thus
the term $\min[1,\rho_0 e^{G(0) t}]$ in (c) at a boundary point $x\in\partial\Omega(t)$ is the outer limit of $\rho$ from the complement of $\cl{\Omega(t)}$.  Thus \eqref{boundary} coincides with the velocity law conjectured in \cite{PQV}. See Theorem~\ref{main} for a more precise statement.

\medskip

Note that (c) indicates that $\rho$ is generically discontinuous across $\partial\Omega(t)$. Thus proving the convergence result requires keeping track of the pressure variable, which appears to be, at least when $\Omega(t)$ has smooth boundary, continuous across $\Omega(t)$. In terms of the pressure, the equation can be written as
\begin{equation}\label{pressure}
p_t = (m-1)p\Delta p + |\nabla p|^2+ (m-1)pG(p).
\end{equation}

Now to state our main result in precise terms, let us denote by $\rho_m$ and $p_m$ the (density and pressure) solutions of \eqref{pme}. We will show the convergence of $p_m$ as $m\to\infty$ to the viscosity solution of the following free boundary problem:

\begin{align}
\label{FB}
\tag{FB}
\left\{\begin{aligned}
-\Delta p &= G(p) &&\hbox{ in }\{p(\cdot,t)>0\},\\
V &= g(x,t)|\nabla p| &&\hbox{ on }\partial\{p(\cdot,t)>0\},\\
\{\rho^E=1\} &\subset\overline{\{p(\cdot,t)>0\}}.
\end{aligned}
\right.
\end{align}
Here $g(x,t):= \frac{1}{1-\min[1,\rho^E(x,t)]}$ is the free boundary velocity coefficient,
and $\rho^E(x,t):= \rho^E_0(x) e^{G(0)t}$ is the density in the ``exterior'' region.
We set $g = +\infty$ whenever $\rho^E \geq 1$.

\medskip

As for the initial data for the free boundary problem \eqref{FB}, we shall assume that
\begin{equation}\label{initial0}
\begin{aligned}
&\Omega_0 \subset \R^n \text{ open bounded}, \quad \partial\Omega_0 \in C^{1,1},\\
&\rho^E_0\in C(\R^n) \hbox{ with } 0\leq \rho^E_0 < 1 \hbox{ and }\rho^E_0 \to 0 \text{ as }
\abs x \to \infty.\\
\end{aligned}
\end{equation}
Note that $\rho^E_0$ is the initial density in the ``exterior'' region, that is, the region outside of
$\Omega_0$.

% \medskip
%
% $\circ$ {\it Initial data for $\rho_m$}
%
% \medskip

\subsection*{Initial data for $\rho_m$}

In terms of the density variable, we would like to show that $\rho_m$ converge to  $\rho(\cdot,t):=
\chi_{\Omega(t)} + \rho^E\chi_{\Omega(t)^c}$, where $\Omega(t)=\{p(\cdot,t)>0\}$. To this end we
will show that the convergence holds locally uniformly for a ``well-prepared" initial density
$\rho_{0,m}$ approximating the initial density function $\rho_0:= \chi_{\Omega_0} + \rho^E_0
\chi_{\Omega_0^c}$. Our approximation is constructed such that the corresponding solution $\rho_m$
is increasing in time (see Lemma~\ref{monotone}). As for the general initial data $\rho_{0,m}$
approximating $\rho_0$, the convergence then will hold in $L^1$ norm due to the convergence result
for  the specific $\rho_{0,m}$ (Theorem~\ref{main}) as well as the $L^1$ contraction inequality for
$\rho_m$ \eqref{contraction}. While we believe that the monotonicity of $\rho_m$ is not an
essential ingredient of the convergence proof in section 4, it is not clear at the moment whether
the uniform convergence result obtained in Theorem~\ref{main} holds for general choices of
$\rho_{0,m}$ (see Corollary~\ref{cor:general}).

\medskip

To construct our specific approximation $\rho_{0,m}$, let us first assume that  $\rho^E_{0,m}$ satisfies, for some $\delta>0$ which is independent of $m$,
\begin{equation}\label{initial}
\begin{aligned}
\rho^E_{0,m}\in L^1(\R^n) \cap C^{1,1}(\R^n), \quad 0 \leq \rho^E_{0,m} < 1 - \delta,\\
\rho^E_{0,m} \to \rho^E_0 \quad \text{ locally uniformly as $m \to \infty$,}\\
\quad m (1 - \delta)^m \norm{D^2 \rho^E_{0,m}}_\infty \to 0 \quad \text{as } m\to\infty.
\end{aligned}
\end{equation}
%and
%next we consider general, i.e., non-radial solutions $\rho_m$ of \eqref{pme} and the corresponding pressure variable $p_m = P_m(\rho_m)$ with initial data $\rho_{0,m}$ that approximate
%the initial data \eqref{initial0}.
%In this section we assume that $\rho^E_{0,m}$ satisfies the assumptions in \eqref{initial} and that
%$\rho_m$ are solutions of \eqref{pme} with initial data
Next suppose
\begin{align}
\label{matched}
\rho_{0,m} := \max \left( P_m^{-1}(p_0), \rho_{0,m}^E \right),
\end{align}
where $P_m$ was introduced in \eqref{Pm}, and
$p_0$ is the unique smooth solution of
\begin{align*}
\left\{
\begin{aligned}
- \Delta p_0 &= G(p_0) && \text{in } \Omega_0,\\
p_0 &= 0 && \text{on } \R^n \setminus \Omega_0.
\end{aligned}
\right.
\end{align*}
As we shall see in the lemma below, this will guarantee that $\rho_m$ is monotone increasing in
time.  After we obtain convergence result for this particular approximation of $\rho_0$, we can use
$L^1$ contraction for solutions of \eqref{pme} to address the case of general $\rho_{0,m}$.

\begin{remark}
Given $\rho^E_0$ satisfying \eqref{initial0}, we can easily define $\rho^E_{0,m} = \rho^E_0 *
\eta_{1/m}$, where $\eta_{1/m}$ is the standard mollifier with radius $1/m$.  Such initial data
satisfies the assumptions \eqref{initial}.  Indeed, we can easily estimate $\norm{D^2
\rho^E_{0,m}}_\infty \leq \norm{\rho^E_0}_{L^1} \norm{D^2 \eta_{1/m}}_\infty \leq C m^2$. The rest
of \eqref{initial} is standard. These assumptions, as in \cite{PQV}, are required to prevent the jump singularity of $\rho_m$ over time at $t=0$.
\end{remark}

Let us now state the main result in this paper.

\begin{theorem}\label{main}
Let $\rho_m$ solve \eqref{pme} with $\rho_{0,m}$ satisfying \eqref{initial}-\eqref{matched}, and let $p_m$ be the corresponding pressure variable.  Then the following holds:
\begin{itemize}
\item[(a)] (Theorem~\ref{th:hs-well-posedness}) There is a unique viscosity solution $p$ of \eqref{FB} with initial data $p_0$, where $p_0$ solves $-\Delta p_0 = G(p_0)$ in $\Omega_0$, and zero otherwise;\\
\item[(b)] (Lemma~\ref{easy}(b)) $\{\rho^E \geq 1\}$ is contained in the closure of $\{p>0\}$;\\
\item[(c)] (Corollary~\ref{cor:main}) The pressure variable $p_m$ locally uniformly converges to $p$ as long as $p$ is continuous; \\
\item[(d)] (Corollary~\ref{cor:main}) $\rho_m$ locally uniformly converges to $\rho:= \chi_{\{p>0\}} +\rho^E \chi_{\set{p = 0}}$ away from $\partial\set{p>0}$.\\
\item[(e)] (Corollary~\ref{cor:00}) assuming that $\rho^E_0$ is a Lipschitz continuous function, $\partial\set{p>0}$ has zero Lebesgue measure in $\R^n\times [0,\infty)$. \\
 \item[(f)] (Proposition~\ref{BV}) $\partial\set{p(\cdot,t)>0}$ is of finite perimeter as long as $\rho^E(\cdot,t)<1$ on $\partial\set{p(\cdot,t)>0}.$
\end{itemize}
\end{theorem}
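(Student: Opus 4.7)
The plan is to prove Theorem~\ref{main} via the \emph{half-relaxed limits} method for viscosity solutions, carried out in the following order: first establish well-posedness of \eqref{FB} (part (a)), including a comparison principle; then identify the upper and lower relaxed limits of $p_m$ as a sub- and supersolution of \eqref{FB} to obtain (c), with (b) and (d) as by-products; and finally prove the regularity assertions (e) and (f) from non-degeneracy and BV estimates.

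For (a), I define viscosity sub- and supersolutions of \eqref{FB} by testing $p$ from outside and inside of $\{p>0\}$ with smooth functions $\varphi$, checking the elliptic equation in the interior and the velocity law $V = g|\nabla p|$ at the boundary. The structural constraint $\{\rho^E=1\}\subset\overline{\{p>0\}}$ is built into the supersolution definition: a supersolution leaving such a point uncovered would violate $V\ge g|\nabla p|$, since $g=+\infty$ there. Comparison follows the strict-ordering/sup-inf convolution scheme used for Hele-Shaw type problems: perturbing a subsolution $u$ downward and sliding it against a supersolution $v$, a first crossing point yields either a violation of the interior elliptic equation or a contradiction at the front, where appropriate semicontinuity of $g$ (upper semicontinuity of the envelope applied to the subsolution, lower semicontinuity to the supersolution) gives the room needed. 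Existence follows by Perron's method. Part (b) is then immediate from the structural constraint in \eqref{FB}.

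For (c) and (d), set
\[
\overline p(x,t) := \halflimsup_{m\to\infty,\,(y,s)\to(x,t)} p_m(y,s), \qquad
\underline p(x,t) := \halfliminf_{m\to\infty,\,(y,s)\to(x,t)} p_m(y,s).
\]
Dividing \eqref{pressure} by $m-1$ and letting $m\to\infty$ forces $-\Delta \overline p = G(\overline p)$ wherever $\overline p>0$, and similarly for $\underline p$. For the free boundary velocity law, I construct planar/radial barriers of the form $\phi(x\cdot\nu - V t)$ for \eqref{pme} whose propagation speed in the stiff limit is $V = g|\nabla p|$: the factor $1/(1-\rho^E)$ emerges from a mass balance at the interface, since over a time increment $\Delta t$ the strip of width $V\Delta t$ newly absorbed into $\{\rho_m\approx 1\}$ must be filled by the incoming flux $\rho_m|\nabla p_m|\Delta t\simeq |\nabla p|\Delta t$, after subtracting the mass $\rho^E V\Delta t$ already present. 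Comparison for \eqref{pme} transports the barrier speed to $\overline p$ and $\underline p$, certifying them as a subsolution and supersolution of \eqref{FB}. The comparison principle from (a) then gives $\overline p \le p \le \underline p$, whence $p_m\to p$ locally uniformly wherever $p$ is continuous. The monotonicity of $\rho_m$ in time from Lemma~\ref{monotone}, together with $p_m$ being bounded below on compact subsets of $\{p>0\}$ and the vanishing-pressure ODE $\partial_t\rho_m=\rho_m G(0)$ in the exterior, then yields the convergence of $\rho_m$ to $\chi_{\{p>0\}}+\rho^E \chi_{\{p=0\}}$ in part (d).

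Part (e) will follow from non-degeneracy of $p$ near its free boundary (a consequence of $-\Delta p = G(p)$ together with the positivity of $|\nabla p|$ inherited from Lipschitz $\rho^E_0$) combined with the finite propagation speed $g|\nabla p|$, which together preclude a thick space-time free boundary. Part (f) rests on BV-in-time of $t\mapsto \chi_{\{p(\cdot,t)>0\}}$, which in turn follows from the monotone expansion of $\{p_m>0\}$ (Lemma~\ref{monotone}) combined with a uniform perimeter estimate from the barrier construction of (c), valid wherever $\rho^E(\cdot,t)<1$ so that $g$ is bounded. The main obstacle I anticipate is the rigorous identification of the velocity coefficient $g$ on the supersolution side when $\rho^E$ is near $1$: the barrier analysis becomes singular there, and I expect to need a quantitative form of the mass-balance argument, stable under small perturbations of $\rho^E$, to close the comparison in a neighborhood of $\{\rho^E=1\}$.
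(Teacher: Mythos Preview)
Your overall architecture---well-posedness of \eqref{FB} via sup/inf-convolution comparison and Perron, then half-relaxed limits $\overline p,\underline p$ identified as sub/supersolutions---matches the paper's. The genuine gap is in your free-boundary step for (c). You propose to ``construct planar/radial barriers of the form $\phi(x\cdot\nu - Vt)$ for \eqref{pme} whose propagation speed in the stiff limit is $V=g|\nabla p|$,'' with the factor $1/(1-\rho^E)$ read off from a mass balance. The paper explicitly flags this as unavailable: when $\rho^E_0\equiv 0$ one has Barenblatt profiles, but ``when $\rho^E_0$ is non-zero, there are no such explicit solutions available in the radial setting,'' and ``direct derivation of convergence using barriers is an interesting open question.'' The difficulty is real: a barrier for \eqref{pme} must simultaneously be a classical sub/supersolution of the degenerate parabolic equation \emph{for each $m$} and have a front that advances at the $m$-independent speed $|\nabla p|/(1-\rho^E)$; the heuristic mass balance does not produce such a function, and the $|\nabla p|^2$ term in \eqref{pressure} destroys simple traveling-wave ans\"atze once the exterior density is nonzero.

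The paper circumvents this by an indirect route you do not mention: it first proves convergence of $(\rho_m,p_m)$ in the \emph{radial fixed-boundary} setting (Section~\ref{se:convergence radial}) via integral estimates---an Aronson--B\'enilan semiconvexity bound with boundary control, the $L^1$ compactness and uniqueness machinery of \cite{PQV} adapted to the domain, and a spatial Lipschitz bound---to obtain locally uniform convergence to the radial classical solution of \eqref{FB}. These radial limits, not barriers for \eqref{pme}, then serve as the test objects at the free boundary in Theorem~\ref{th:convergence}: at a putative crossing point one localizes to a small ball, replaces the barrier by a radial one, and compares $\rho_m$ with the radial $\tilde\rho_m$ whose limit is known. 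Without this radial convergence theorem your identification of $\overline p$ as a subsolution and $\underline p$ as a supersolution is incomplete. Your sketches for (e) and (f) also diverge from the paper (which uses a density estimate via time-shift comparison for (e), and interior-ball approximations plus $L^1$-contraction for (f)); those may be salvageable, but the barrier gap in (c) is the essential missing idea.
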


Note that the free boundary motion law in \eqref{FB} yields (a) a generic discontinuity of $\rho$ across $\partial\set{p>0}$ and (b) a generic discontinuity of $p$ over time when the region $\{\rho^E\geq1\}$ nucleates. For this reason the convergence of $\rho_m$ and $p_m$ as stated appears to be optimal.

\begin{remark}
Due to the fact that $\rho$ is nonzero outside of $\{p>0\}$, the set $\{p_m>0\}$ will degenerate as $m\to\infty$ and will not converge to $\{p>0\}$. But our result (Corollary~\ref{cor:main}) implies that  for any $\e>0$, the set $\{p_m>\e\}$ will be a subset of $\{p>0\}$ for sufficiently large $m$. In fact one can characterize $\{p>0\}$ as
$$
\{p>0\} = \{\liminf_{m\to\infty} p_m>0\}.
$$
\end{remark}

\medskip

As in \cite{K03} we will be using the notion of viscosity solutions, which is based on comparison principle with appropriate choices of test functions.  In our problem these will be radial functions in local neighborhoods with fixed boundaries. In the viscosity solutions theory, this  corresponds to the usage of second-order polynomials as test functions for nonlinear elliptic equations (see for instance \cite{CIL}). Therefore the first crucial step in the argument is to prove the above theorem in the radial case. When there is no surrounding density, i.e., when $\rho^E_0 = 0$, we rely on {\it Barenblatt solutions}, a well-known family of radially symmetric, compactly supported solutions of the porous medium equation. Based on the convergence of these radial solutions we apply the viscosity solution approach to obtain the corresponding result in \cite{K03}.  On the other hand, when $\rho^E_0$ is non-zero, there are no such explicit solutions available in the radial setting. The other challenges we face are the possible jump-type discontinuity over time of the tumor set $\{p>0\}$ due to the free boundary velocity becoming infinite in the law \eqref{boundary} when the density reaches one, as well as the source term $G(p)$, which each prevent the straightforward application of a comparison principle argument between subsolutions and supersolutions.

\medskip

\subsection*{Formal derivation of the free boundary motion law}

Before we finish this section let us present a formal computation indicating the free boundary velocity law \eqref{boundary}. Let us write \eqref{pme} as
$$
\rho_t - \Delta \tilde{p} = \rho G(p), \hbox{ where }\tilde{p} =\rho^m.
$$

Formally from the definition of $\tilde{p}$ it should  be clear that $\tilde{p}$ and the pressure variable converges to the same limit $p_{\infty}$ as $m\to\infty$. Let us also denote the limit density solution as $\rho_{\infty}$, and suppose that $\rho_{\infty}$ is discontinuous across $\Omega(t)=\{p_{\infty}(\cdot, t)>0\} = \{\rho_{\infty}(\cdot, t)=1\}$. Again if we take the time derivative of the total mass at the formal level, denoting $p_{\infty}=p$, $\rho_{\infty}=\rho$ and  $\rho^+$ and $\rho^-$ as $\rho_{\infty}$ inside and outside of $\Omega(t)$, then we have

\begin{align*}
\begin{aligned}
\int \rho G(p) = \frac d{dt} \int \rho \dx
&= \frac d{dt} \bra{ \int_{\Omega(t)} \rho \dx  + \int_{\R^n-\Omega(t)}  \rho \dx }\\
&= \int_{\Omega(t)} (\rho^+)_t \dx + \int_{\partial\Omega(t)} V (\rho^+ -\rho^-)\;dS + \int_{\R^n-\Omega(t)} (\rho^-)_t \dx\\
&=  \int_{\Omega(t)} \Delta p +\int_{\partial\Omega(t)} V (\rho^+ -\rho^-)dS +\int \rho G(p)\\
&= \int_{\partial\Omega(t)} [-|Dp| + V(\rho^+-\rho^-) ] dS + \int \rho G(p).
\end{aligned}
\end{align*}
This computation indicates \eqref{boundary}.
\vspace{20pt}

\subsection*{Outline}
In section 2 we will prove the comparison principle and uniqueness for the limiting free boundary problem \eqref{FB}. The main results are Theorem ~\ref{th:comparison} and Theorem~\ref{th:hs-well-posedness}.
They extend the comparison and well-posedness results from \cite{Pozar14}
for the Hele-Shaw problem with a time-dependent free boundary velocity coefficient $g$.
The main challenge is to allow for an infinite coefficient depending on time.
This is handled by a shift in time using the fact that the coefficient is nondecreasing in time and possesses a certain regularity.
In section 3 we show the convergence in the radially symmetric setting with fixed boundary data.
Let us mention that we rely on a compactness argument based on integral estimates to derive the convergence of the radial solutions in local neighborhoods. Direct derivation of convergence using barriers is an interesting open question at the moment.  Our integral estimates are modified versions from \cite{PQV} due to the presence of fixed boundaries.
In section 4 we prove the convergence result (Corollary~\ref{cor:main}) based on the comparison principle in section 2 as well as the radial convergence result in section 3. Lastly, in section 5 we present an estimate on the perimeter of the set $\{p>0\}$ based on geometric arguments.

\begin{remark}
Before completion of this paper we learned that similar results were shown by Mellet, Perthame and Quir\'{o}s \cite{MPQ} following a different approach. Their approach
relies on integral estimates, while  ours relies on pointwise arguments which yield uniform convergence results. We believe that both of our approaches have different merits for applications to different contexts.
\end{remark}
\section{Notion of solutions and comparison principle}

\subsection{Notation}

We will follow the notation from \cite{Pozar14}.

Let $E \subset \R^d$ for some $d \geq 1$.
Then $USC(E)$ and $LSC(E)$ are respectively
the sets of all upper semi-continuous
and lower semi-continuous functions on $E$.
For a locally bounded function $u$ on $E$
we define the semi-continuous envelopes
$u^{*, E} \in USC(\R^d)$ and $u_{*,E} \in LSC(\R^d)$
as
\begin{align}\label{usclsc}
\begin{aligned}
    u^{*,E} &:=
    \inf \set{v \in USC(\R^d): v \geq u \text{ on } E},\\
    u_{*, E} &:= \sup \set{v \in LSC(\R^d): v \leq u \text{ on } E}.
\end{aligned}
\end{align}
Note that $u^{*,E} : \R^d \to [-\infty, \infty)$
and $u_{*, E} : \R^d \to (-\infty, \infty]$ are finite on $\cl E$.
We simply write $u^*$ and $u_*$
if the set $E$ is understood from the context.
The envelopes can be also expressed as
\[
u^{*,E}(x) = \lim_{\delta\to0} \sup \set{u(y) : y \in E,\ \abs{y - x} < \delta}
\quad \text{for } x \in \cl E,
\qquad u_{*,E} = -(-u)^{*,E}.
\]

Let us review the shorthand notation for
the set of positive values of a given function $u: E \to \R$,
defined on a set $E \subset \Rn\times \R$,
\begin{equation*}
\Omega(u; E) := \set{(x,t) \in E: u(x,t) > 0}, \qquad
\Omega^c(u; E) := \set{(x,t) \in E: u(x,t) \leq 0},
\end{equation*}
and the closure $\cl\Omega(u;E) := \cl{\Omega(u;E)}$.
For $t \in \R$, the time-slices $\cl\Omega_t(u; E)$,
$\Omega_t(u; E)$ and $\Omega^c_t(u; E)$
are defined in the obvious way, i.e.,
\begin{align*}
\cl\Omega_t(u;E) = \set{x : (x,t) \in \cl\Omega(u;E)}, \qquad \text{etc.}
\end{align*}
We shall call the boundary of the positive set in $E$ the \emph{free
boundary} of $u$ and denote it $\Gamma(u;E)$, i.e.,
\begin{align*}
\Gamma(u; E) = (\partial \Omega(u; E)) \cap E.
\end{align*}
If the set $E$ is understood from the context, we shall simply write
$\Omega(u)$, etc.

For given constant $\tau \in \R$ we will often abbreviate
\begin{align*}
\set{t \leq \tau} := \set{(x,t) \in \Rn\times\R: t \leq \tau},
    \qquad \text{etc.}
\end{align*}

\subsection{Viscosity solutions}

We will consider a general problem for the introduction of the notion of viscosity solutions.
To be more specific, we will define solutions of the problem
\begin{align}
\label{hsg}
\left\{
\begin{aligned}
F(D^2u, Du, u) &= 0 && \text{in } \set{u > 0},\\
u_t - g \abs{Du}^2 &= 0  && \text{on } \partial\set{u > 0}.
\end{aligned}
\right.
\end{align}
We assume that
$F$ is a general elliptic operator $F(D^2u, Du, u)$ that satisfies the following:
There exist constants $c_0$, $c_1 \geq 0$ and $0 < \lambda \leq \Lambda$ such that
\begin{align*}
\mathcal P_{\lambda, \Lambda}^-(M - N) - c_1\abs{p -q} - c_0 \abs{z - w}
&\leq F(M, p, z) - F(N, q, w)\\
&\leq \mathcal P_{\lambda, \Lambda}^+(M - N) + c_1\abs{p -q} + c_0 \abs{z - w}
\end{align*}
for all $M, N \in Sym_n$, $p,q \in \Rn$, $z,w \in \R$,
where $\mathcal P_{\lambda, \Lambda}^\pm$
are the Pucci extremal operators.
This guarantees that $F$ has the strong maximum principle and Hopf's lemma;
see \cite{ArmstrongThesis}.
Then we need to assume that $F_u > 0$ and that for some $p_M > 0$
\begin{align*}
F(0,0,0) < 0 \qquad \text{and} \qquad F(0,0,p_M) = 0.
\end{align*}

\begin{remark}
In the case of \eqref{FB} we set $F(X, p, u) = - \trace X - G(u)$.
\end{remark}
For the velocity coefficient $g: \Rn\times\R \to (0,\infty]$ we will assume that
\begin{align}
\label{g-assumption}
\begin{aligned}
&\text{$g$ is continuous at every point of $\set{g < \infty}$}\\
\text{and } &\text{$g(\hat x, \hat t) = \liminf_{(x,t)\to(\hat x, \hat t)} g(x,t)$ for all $(\hat x, \hat t)$}
\end{aligned}
\end{align}

As in the previous papers \cite{K03, Pozar14}, we define viscosity solutions in two ways: using barriers and using test functions.
These two notions will be shown to be equivalent, but each has its advantages in certain arguments.
We will use the notion using barriers, but we still include the notion via test functions to show the relation with the original definition in \cite{K03}.
The main difference from \cite{Pozar14} is to allow for $g = +\infty$.

Before proceeding with the definition of a viscosity solution,
we first recall the definition of parabolic neighborhood and strict separation used in \cite{Pozar14}:

\begin{definition}[Parabolic neighborhood and boundary]
\label{def:parabolic-nbd}
\ \\A nonempty set $E \subset \Rn\times \R$ is called a \emph{parabolic neighborhood} if $E = U \cap \set{t \leq \tau}$ for some open set $U \subset \Rn\times \R$ and some $\tau \in \R$.
We say that $E$ is a parabolic neighborhood of $(x,t) \in \Rn\times\R$
if $(x,t) \in E$.
Let us define $\partial_P E := \cl{E} \setminus E$, the \emph{parabolic boundary} of $E$.
\end{definition}

Now we introduce an important concept in the theory, the notion of \emph{strict separation}.
We shall use the version introduced in \cite{Pozar14}, which differs slightly from the one introduced in \cite{K03}).

\begin{definition}[Strict separation]
\label{def:strict-separation}
Let $E \subset \Rn \times \R$ be a parabolic neighborhood,
and $u, v : E \to \R$ be
bounded functions on $E$,
and let $K \subset \cl E$.
We say that $u$ and $v$ are strictly separated
on $K$ with respect to $E$,
and we write $u \strictordwrt{E} v$ in $K$,
if
$$
u^{*,E} < v_{*,E} \hbox{ in }
    K \cap \cl\Omega(u;E).
$$
\end{definition}

\begin{remark}
We do not require non-negative functions above, since taking a semicontinuous envelope commutes with taking the positive part and $0 \leq u^{*,E} = (u_+)^{*,E} = \pth{u^{*,E}}_+$ on $\cl\Omega(u;E)$.
\end{remark}

The following lemma was proved in \cite{Pozar14}.

\begin{lemma}[{c.f. \cite[Lemma~2.14]{Pozar14}}]
\label{le:cross}
Suppose that $E$ is a bounded parabolic neighborhood and $u$, $v$ are locally bounded functions on $E$.
The set
\begin{align}
\label{cross-time-set}
\Theta_{u,v;E} := \set{\tau: u \strictordwrt{E} v \text{ in } \cl E \cap \set{t \leq \tau}}
\end{align}
is open and $\Theta_{u,v;E} = (-\infty, T)$ for some $T \in (-\infty, \infty]$.
\end{lemma}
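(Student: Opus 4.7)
The plan is to separately verify that $\Theta := \Theta_{u,v;E}$ is downward-closed and that it is open in $\R$; together these two properties force the claimed form. Downward closure is immediate from the definition of strict separation, since decreasing $\tau$ only shrinks the set $\cl E \cap \set{t \leq \tau} \cap \cl\Omega(u;E)$ on which the inequality $u^{*,E} < v_{*,E}$ is required to hold. So if $\tau_0 \in \Theta$ and $\tau' \leq \tau_0$, then $\tau' \in \Theta$ as well.

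For openness I would fix $\tau_0 \in \Theta$ and work on the compact set $K_0 := \cl E \cap \set{t \leq \tau_0} \cap \cl\Omega(u;E)$, whose compactness comes directly from boundedness of $E$. The key structural observation is that $w := u^{*,E} - v_{*,E}$ is upper semi-continuous, both $u^{*,E}$ and $-v_{*,E}$ being USC by construction of the envelopes, so $w$ attains its maximum on $K_0$, and by the hypothesis $\tau_0 \in \Theta$ this maximum is some $-2\delta < 0$. The core argument is then a standard compactness-contradiction: if no $\tau_0 + \eta$ with small $\eta > 0$ lay in $\Theta$, one could extract a sequence of violating points $(x_n, t_n)$ in $\cl E \cap \set{t \leq \tau_0 + \eta_n} \cap \cl\Omega(u;E)$ with $w(x_n, t_n) \geq 0$; subconvergence to some $(x_0, t_0) \in K_0$ (using closedness of $\cl\Omega(u;E)$ together with $t_n \leq \tau_0 + \eta_n \to \tau_0$) combined with upper semi-continuity of $w$ would yield $w(x_0, t_0) \geq 0$, contradicting the uniform bound $w \leq -2\delta$ on $K_0$.

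Combining the two properties identifies $\Theta$ as an open downward-closed subset of $\R$, hence of the form $(-\infty, T)$ with $T \in (-\infty, \infty]$ (with the convention $T = -\infty$ covering the degenerate case $\Theta = \emptyset$ if it arises). I do not anticipate a real obstacle here; the only genuinely nontrivial ingredient is the upper semi-continuity of the difference of envelopes, which is automatic from \eqref{usclsc}. It is worth noting that boundedness of $E$ enters in an essential way, precisely to guarantee that the sequence of violating points admits a subsequential limit lying in the compact set $K_0$.
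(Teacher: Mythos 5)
Your argument is correct in structure and follows the natural route: downward closure is immediate from the definition, and openness follows from the compactness--upper-semicontinuity contradiction (this is exactly the flavor of the cited \cite[Lemma~2.14]{Pozar14}). The key technical point---that $u^{*,E}$ and $v_{*,E}$ are both finite on $\cl E$, so $w:=u^{*,E}-v_{*,E}$ is a well-defined real-valued USC function there---you identify correctly, and the sequence of violating points does subconverge into $K_0$ because $\cl E$, $\set{t\leq\tau_0+\eta_n}$, and $\cl\Omega(u;E)$ are all closed.

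The one place to tighten is your closing parenthetical. Writing ``with the convention $T=-\infty$ covering the degenerate case $\Theta=\emptyset$'' is inconsistent with the conclusion $T\in(-\infty,\infty]$, which explicitly rules out $T=-\infty$. In fact $\Theta$ is \emph{never} empty under the hypotheses, and this is the second (easy but necessary) place boundedness of $E$ enters: since $E$ is bounded, $\cl E\cap\set{t\leq\tau}=\emptyset$ for all $\tau$ sufficiently negative, so strict separation holds vacuously and such $\tau$ belong to $\Theta$. This short observation should replace the convention remark; with it, the characterization $\Theta=(-\infty,T)$ with $T\in(-\infty,\infty]$ follows exactly as you argue.
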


\subsubsection{Notion via barriers}

We build strict barriers for \eqref{hsg}.

\begin{definition}
\label{def:barrier}
Let $U \subset \Rn \times \R$ be a nonempty open set and let $\phi \in C^{2,1}(U)$ be such that $D\phi \neq 0$ on $\Gamma(\phi; U)$.
We say that $\phi$ is a \emph{sub-barrier} of \eqref{hsg} in $U$ if
there exists a positive constant $\delta > 0$ such that
\begin{enumerate}[(i)]
\item $F(D^2\phi, D\phi, \phi) < - \delta$ in $\Omega(\phi; U)$,
\item $\phi_t - g \abs{D\phi}^2 < - \delta$ on $\Gamma(\phi; U)$.
\end{enumerate}
A \emph{superbarrier} is defined analogously by reversing the inequalities in (i)--(ii) and the sign in front of $\delta$,
and requiring additionally that $g < \infty$ on $\Omega^c(\phi; U)$.
\end{definition}

Note that it is enough to consider barriers with finite free boundary velocity
since we will explicitly require in the definition that the positive set of a viscosity solution
that the positive set of the solution always contains the set where the free boundary
velocity coefficient $g$ is infinite.

\begin{remark}
The Definition~\ref{def:barrier} does not assume $\phi \geq 0$, we can always take the positive part later, as needed.
This does not seem to play a role in the strict separation in Definition~\ref{def:strict-separation}.
\end{remark}

The definition of solutions follows.

\begin{definition}
\label{def:visc-barrier}
We say that a locally bounded, non-negative
function $u: Q \to [0,\infty)$ is
a \emph{viscosity subsolution} of \eqref{hsg} on $Q$ if
for every bounded parabolic neighborhood $E \subset Q$
and every superbarrier $\phi$ on $U$
such that $u \strictordwrt{E} \phi$ on $\partial_P E$,
we also have $u \strictordwrt{E} \phi$ on $\cl E$.

Similarly, a locally bounded, non-negative
function $u: Q \to [0,\infty)$ is
a \emph{viscosity supersolution} of \eqref{hsg} if
$\set{g = \infty} \cap Q \subset \cl\Omega(u_*; Q)$,
and for every bounded parabolic neighborhood $E \subset Q$
and every subbarrier $\phi$ on $U$ such that $\phi \strictordwrt{E} u$ on $\partial_P E$, we also have $\phi \strictordwrt{E} u$ on $\cl E$.

Finally, $u$ is a \emph{viscosity solution} if it is both
a viscosity subsolution and a viscosity supersolution.
\end{definition}

\begin{remark}
\label{re:super-phase-containment}
Since we require $\set{g = \infty} \subset \cl\Omega(v)$ for all $v \in \supers(g; Q)$, we also have to address the stability of this. That is,
\begin{align*}
\set{g = \infty} \subset \cl\Omega(\inf_{v \in \mathcal A} v)
\end{align*}
whenever $\mathcal A \subset \supers(g; Q)$.
We need that $\set{g = \infty} = \cl{\interior \set{g = \infty}}$ for this.
Then we use subsolutions of the elliptic problem in the interior of the positive phase; they give uniform lower bound.
\end{remark}

\begin{remark}
It is not hard to check that if $u$ ($v$) is a viscosity sub(super)solution of \eqref{hsg} then
$$
-\Delta u \leq G(u) \quad \hbox{ and } -\Delta v \geq G(v) \hbox{ in } \{v(\cdot,t)>0\}.
$$
\end{remark}

\begin{remark}
As is standard in the viscosity theory, it is enough to consider only simple cylinders with balls as their base as the parabolic neighborhoods $E$ in Definition~\ref{def:visc-barrier}.
\end{remark}

\subsubsection{Notion via test functions}

Similarly to the previous work in \cite{K03,Pozar14}, we can give an equivalent definition of the notion of viscosity solutions via test functions.
In the following definitions, $Q$ is an arbitrary nonempty parabolic neighborhood.

\begin{definition}
\label{def:visc-test-sub}
We say that a locally bounded, non-negative
function $u: Q \to [0,\infty)$ is
a \emph{viscosity subsolution} of \eqref{hsg} on $Q$ if
\begin{enumerate}[(i)]
\item \emph{(continuous expansion)}
\[
\cl\Omega(u; Q) \cap Q \cap \set{t \leq \tau}
    \subset \cl{\Omega(u;Q) \cap \set{t < \tau}}
    \cup \set{g = \infty}
    \quad \text{for every $\tau > 0$},
\]

\item \emph{(maximum principle)}\\
for any $\phi \in C^{2,1}$
such that $u^* - \phi$ has a local maximum at
$(\hat x, \hat t) \in Q \cap \cl\Omega(u;Q)$
in $\cl\Omega(u;Q) \cap \set{t \leq \hat t}$, we have
\begin{enumerate}[({ii}-1)]
\item if $u^*(\hat x, \hat t) > 0$ then $F(D^2 \phi(\hat x, \hat t), D^2 \phi(\hat x, \hat t), u^*(\hat x, \hat t))\leq 0$,
\item if $u^*(\hat x, \hat t) = 0$ then either
    $F(D^2 \phi(\hat x, \hat t), D^2 \phi(\hat x, \hat t), 0) \leq 0$ or
    $D \phi(\hat x, \hat t) = 0$
    or
    $\phi_t(\hat x, \hat t) - g(\hat x, \hat t) \abs{D\phi}^2(\hat x, \hat t) \leq 0$.
\end{enumerate}
\end{enumerate}
\end{definition}

\begin{remark}
The condition (i) in Definition~\ref{def:visc-test-sub} is necessary
to prevent a scenario where a ``bubble'' closes instantly;
more precisely, a subsolution cannot become instantly
positive on an open set surrounded by a positive phase,
or cannot fill the whole space instantly,
unless the expansion of the positive phase happens into the set $\set{g = \infty}$.
\end{remark}

\begin{definition}
\label{def:visc-test-super}
We say that a locally bounded, non-negative
function $u: Q \to [0,\infty)$ is
a \emph{viscosity supersolution} of \eqref{hsg} on $Q$ if
\begin{enumerate}[(i)]
\item (support)
\begin{enumerate}[({i}-1)]
\item
    if $(\xi,\tau) \in \Omega(u_*; Q)$ then
    $(\xi, t) \in \Omega(u_*;Q)$
    for all $(\xi,t) \in Q$, $t \geq \tau$.
\item
    \begin{align*}
    \set{g = \infty} \cap Q \subset \cl\Omega(u_*; Q).
    \end{align*}
\end{enumerate}
\item (maximum principle)\\
for any $\phi \in C^{2,1}$
such that $u_* - \phi$ has a local minimum at $(\hat x, \hat t) \in Q$
in $\set{t \leq \hat t}$, we have
\begin{enumerate}[({ii}-1)]
\item if $u_*(\hat x, \hat t) > 0$ then $F(D^2 \phi(\hat x, \hat t), D^2 \phi(\hat x, \hat t), u_*(\hat x, \hat t)) \geq 0$,
\item if $u_*(\hat x, \hat t) = 0$ then either
    $F(D^2 \phi(\hat x, \hat t), D^2 \phi(\hat x, \hat t), 0) \geq 0$ or
    $D \phi(\hat x, \hat t) = 0$
    or
    $g(\hat x, \hat t) < \infty$ and
    $\phi_t(\hat x, \hat t) - g(\hat x, \hat t) \abs{D\phi}^2 (\hat x, \hat t) \geq 0$.
\end{enumerate}
\end{enumerate}
\end{definition}

\begin{remark}
As was noted in \cite{Pozar14}, assumption Definition~\ref{def:visc-test-super}(i-1) is there only to make our life easier.
\end{remark}

\begin{remark}
The closure in the condition Definition~\ref{def:visc-test-super}(i-2) cannot be removed since $\Omega(u_*; Q)$ is a (relatively) open set.
If at a given time $g$ becomes $+\infty$ on an open set outside of $\Omega_t(u_*)$ in the previous times, then $u_*$ is zero on this set.
\end{remark}

\begin{remark}
As is standard in the theory of viscosity solutions, we can require that the test functions $\phi$ are smooth, even polynomials of at most second order in space and first order in time.
For (ii-2) we can use only radially symmetric test functions.
\end{remark}

The definition of a viscosity solution follows.

\begin{definition}
\label{def:visc-test}

We say that a locally bounded, non-negative
function $u: Q \to [0,\infty)$ is
a \emph{viscosity solution} of \eqref{hsg} on $Q$ if it is both viscosity subsolution and viscosity supersolution on $Q$.
\end{definition}

\subsection{Equivalence of notions}

We now get a result similar to \cite[Proposition~2.13]{Pozar14}.

\begin{proposition}
\label{pr:visc-equivalency}
The definitions of viscosity subsolutions (resp. supersolutions)
in Definition~\ref{def:visc-test-sub} (resp. \ref{def:visc-test-super})
and in Definition~\ref{def:visc-barrier}
are equivalent.
\end{proposition}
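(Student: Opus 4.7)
The plan is to follow the blueprint of \cite[Proposition~2.13]{Pozar14}, which handles the case of finite, continuous $g$. There are four implications to prove (barriers $\Rightarrow$ test functions and vice versa, for both sub- and supersolutions), and the only new ingredient is to keep track of the set $\set{g = \infty}$, on which super-barriers are by definition forbidden and on which the supersolution must be positive.

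Barriers $\Rightarrow$ test functions. For the subsolution direction, fix $u$ satisfying Definition~\ref{def:visc-barrier}. For the maximum principle (ii), suppose $u^* - \phi$ attains a local maximum at $(\hat x, \hat t)$ with value $c = u^*(\hat x,\hat t) - \phi(\hat x, \hat t)$. If (ii-1) fails, i.e.\ $u^*(\hat x, \hat t) > 0$ and $F(D^2\phi, D\phi, u^*(\hat x, \hat t)) > 0$, then by continuity $\phi + c + \varepsilon \abs{x - \hat x}^2 + \varepsilon(\hat t - t)$ is a super-barrier in a small ball around $(\hat x, \hat t)$ that strictly separates from $u$ on the parabolic boundary, contradicting the barrier definition at $(\hat x, \hat t)$. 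The argument for (ii-2) is the same, using in addition that $D\phi \neq 0$ by hypothesis and using the free boundary inequality of a super-barrier; one can restrict to a small enough neighborhood so that $g < \infty$ there. For the continuous expansion (i), if $(\hat x, \hat t) \in \cl\Omega(u;Q)$ but is isolated from earlier positive times and from $\set{g = \infty}$, construct a small radial super-barrier of the form $\phi(x,t) = c(r(t) - \abs{x-\hat x})_+$ with $r(t)$ decreasing to $0$ at $t = \hat t$ from a small positive value, to obtain a contradiction. For supersolutions, (i-2) is literally built into Definition~\ref{def:visc-barrier}; (i-1) follows by using small elliptic-type subbarriers in the interior phase as in Remark~\ref{re:super-phase-containment}; the maximum principle is analogous.

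Test functions $\Rightarrow$ barriers. Given a bounded parabolic neighborhood $E$ and a super-barrier $\phi$ with $u \strictordwrt{E} \phi$ on $\partial_P E$, apply Lemma~\ref{le:cross} to the crossing time set $\Theta_{u,\phi;E} = (-\infty, T)$. Assume for contradiction that $T < \sup\set{t : E_t \neq \emptyset}$. By upper/lower semicontinuity and strict separation on $\partial_P E$, there exists an interior contact point $(\hat x, T) \in E \cap \cl\Omega(u^*;E)$ at which $u^*(\hat x, T) \geq \phi_{*}(\hat x, T)$ with equality at least on $\cl\Omega(u^*;E)$. Choose $\phi$ itself (or a suitable smoothing) as the test function from above. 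If $u^*(\hat x, T) > 0$ then (ii-1) gives $F(D^2\phi, D\phi, u^*) \leq 0$ at $(\hat x, T)$, contradicting $F(D^2\phi, D\phi, \phi) > \delta$ and $u^*(\hat x,T) = \phi(\hat x, T)$ combined with the monotonicity $F_u > 0$. If $u^*(\hat x, T) = 0$ then $\phi(\hat x, T) = 0$ as well, so $(\hat x, T) \in \Gamma(\phi; U)$, hence $D\phi \neq 0$ there by the barrier definition; the elliptic alternative in (ii-2) is ruled out as before, and the velocity alternative $\phi_t - g \abs{D\phi}^2 \leq 0$ contradicts the strict super-barrier inequality $\phi_t - g\abs{D\phi}^2 > \delta$ (using that $g(\hat x, T) < \infty$ because $(\hat x, T) \in \Omega^c(\phi;U)$ and super-barriers require $g < \infty$ there). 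The supersolution direction is symmetric, with the additional observation that (i-2) of Definition~\ref{def:visc-test-super} forces $\cl\Omega(u_*) \supset \set{g = \infty}$, which is exactly what is separately imposed in Definition~\ref{def:visc-barrier}.

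The main obstacle is the infinite-velocity set $\set{g = \infty}$. The resolution is to treat it as an a priori-prescribed part of the positive phase for supersolutions, and as an explicit exception in the continuous expansion of subsolutions. This way one never has to test with a barrier across $\set{g = \infty}$: super-barriers are only ever evaluated on their own negative set where $g$ is finite, and subbarriers propagate the positive phase of the supersolution but $\set{g = \infty}$ is already contained in $\cl\Omega(u_*)$ by fiat. A secondary technical point, already present in \cite{Pozar14}, is that the touching point $(\hat x, T)$ produced by the crossing-time analysis must be identified as an interior point of $E$; this uses the strict inequality on $\partial_P E$ and the openness of $\Theta_{u,\phi;E}$ from Lemma~\ref{le:cross}.
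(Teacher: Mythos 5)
Your proposal is correct and follows essentially the same strategy the paper uses: both directions are reduced to the argument in \cite[Proposition~2.13]{Pozar14}, with the single new issue being the infinite-velocity set $\set{g = \infty}$, resolved exactly as you say — super-barriers are forbidden from having $g = \infty$ on their zero phase, so the exceptional clause in the continuous expansion condition of Definition~\ref{def:visc-test-sub}(i) never interferes, and the containment $\set{g = \infty}\subset\cl\Omega(u_*)$ is imposed by fiat on both sides for supersolutions. The paper keeps the write-up terser (it merely names the delicate point about ordered supports at the crossing time, cites \cite{Pozar14} for everything else, and verifies the continuous expansion and support monotonicity with radially symmetric barriers resp.\ a stationary subbarrier $\alpha(c-\abs x^2)_+$); your version spells out the perturbed-barrier touching argument at the contact point in more detail, but the content is the same. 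Two small remarks: (1) in your barrier $\Rightarrow$ test function argument for subsolutions at a zero-pressure touching point $(\hat x,\hat t)$, you say ``restrict to a small neighborhood so that $g < \infty$''; if $g(\hat x,\hat t)=\infty$ this is not possible, but then the velocity alternative in Definition~\ref{def:visc-test-sub}(ii-2) is satisfied trivially (the left side is $-\infty$ when $D\phi\neq0$), so nothing is lost — just state that case separately; (2) your citation of Remark~\ref{re:super-phase-containment} for the support monotonicity (i-1) is slightly off (that remark concerns stability of the $\set{g=\infty}$ containment under infima), though the underlying idea of using a stationary subbarrier is exactly what the paper does.
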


\begin{proof}
The direction from Definition~\ref{def:visc-test-sub} follows the proof of
\cite[Proposition~2.13]{Pozar14}.
The only detail that we have to check is that the supports of a subsolution and a superbarrier
stay ordered at the crossing time.
Since the continuous expansion of subsolution in Definition~\ref{def:visc-test-sub}(i)
is valid only in the set $\set{g < \infty}$,
we need to use the fact that a for superbarrier in Definition~\ref{def:barrier}
satisfies $\Omega^c(\phi; U) \subset \set{g < \infty}$.

We do not have this issue with supersolutions, so the proof is standard.

The direction from Definition~\ref{def:visc-barrier} to Definition~\ref{def:visc-test-sub}
and \ref{def:visc-test-super} is also standard.
The continuous expansion Definition~\ref{def:visc-test-sub}(i) can be verified by a comparison
with radially symmetric barriers.
The monotonicity of the support of a supersolution Definition~\ref{def:visc-test-super}(i-1),
an open set at every time,
can be shown by a comparison with a stationary subbarrier such as
$\phi(x,t) = \alpha(c - \abs x^2)_+$ for appropriate constants $\alpha, c > 0$.
\end{proof}

With this proposition, we will from now on use the two notions of subsolutions
and supersolutions
from Definition~\ref{def:visc-barrier}, and from
Definition~\ref{def:visc-test-sub}
and \ref{def:visc-test-super}
interchangeably.

\subsection{Viscosity solution classes}

\begin{definition}
For a given function $g$
and a nonempty parabolic neighborhood $Q \subset \Rn \times \R$ and $g$ satisfying \eqref{g-assumption}
we define the following classes of solutions:
\begin{itemize}
\item $\supers(g,Q)$,
    the set of all viscosity supersolutions of the Hele-Shaw problem
    \eqref{hsg} on $Q$;
\item $\subs(g,Q)$,
    the set of all viscosity subsolutions of \eqref{hsg} on $Q$;
\item $\sol(g,Q) = \supers(g,Q) \cap \subs(g,Q)$,
    the set of all viscosity solutions of \eqref{hsg} on $Q$.
\end{itemize}
\end{definition}

\subsection{Basic properties of solutions}

A subsolution is a subsolution of the elliptic problem on the whole space.

\begin{proposition}
\label{pr:subs-elliptic}
If $u \in \subs(g, Q)$ for some $g$ and $Q$ then $x \mapsto u^*(x, \hat t)$ is the standard viscosity solution of
\begin{align*}
F(D^2 \psi, D\psi, \psi) \leq 0.
\end{align*}
on $\set{x: (x,\hat t) \in Q}$ for every $\hat t \in \R$.

Similarly, if $u \in \supers(g, Q)$ for some $g$ and $Q$, then $x \mapsto u_*(x, \hat t)$ is the standard viscosity solution of
\begin{align*}
F(D^2 \psi, D\psi, \psi) \geq 0.
\end{align*}
on $\Omega_{\hat t}(u_*, Q)$.
\end{proposition}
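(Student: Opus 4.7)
The content of this proposition is that the parabolic sub/super-solution condition for the free boundary problem restricts, on each time slice, to the standard elliptic sub/super-solution condition. The crucial observation underlying my plan is that conditions (ii-1) in Definitions~\ref{def:visc-test-sub} and \ref{def:visc-test-super} involve only the spatial derivatives $D^2\phi$ and $D\phi$ of the test function together with the value of $u$ at the extremum: the time derivative $\phi_t$ plays no role as long as $u > 0$. This is of course a reflection of the fact that inside the positive phase, the free boundary problem reduces to the purely elliptic equation $F(D^2 u, Du, u) = 0$. The goal is therefore to manufacture, from a given spatial test function $\phi(x)$, a space-time test function with the same spatial $2$-jet at $(\hat x, \hat t)$ to which the parabolic definition applies.

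The main tool I will use is a Crandall--Ishii--Lions type penalization in the time variable. For the subsolution direction, fix $\phi \in C^2$ such that $u^*(\cdot, \hat t) - \phi$ attains a local maximum at $\hat x$ with $u^*(\hat x, \hat t) > 0$. I first replace $\phi$ by $\phi + c|x - \hat x|^2$ with small $c > 0$ to make this spatial maximum strict on some ball $B_r(\hat x)$; the Pucci bounds give
\[
0 \leq F(D^2\phi + 2cI, D\phi, u^*)(\hat x) - F(D^2\phi, D\phi, u^*)(\hat x) \leq 2 n \Lambda c,
\]
so this perturbation is harmless in the limit $c \to 0$. For each $\epsilon > 0$, set $\Psi_\epsilon(x,t) := \phi(x) + (t - \hat t)^2/\epsilon$ and let $(x_\epsilon, t_\epsilon)$ maximize the upper semicontinuous function $u^* - \Psi_\epsilon$ on the compact set $\cl{B_r(\hat x)} \times [\hat t - \tau, \hat t] \cap \cl\Omega(u^*; Q)$. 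The inequality $(u^* - \Psi_\epsilon)(x_\epsilon, t_\epsilon) \geq (u^* - \Psi_\epsilon)(\hat x, \hat t) = 0$ combined with local boundedness of $u^*$ gives $(t_\epsilon - \hat t)^2/\epsilon \leq C$, hence $t_\epsilon \to \hat t$. Upper semicontinuity of $u^*$, continuity of $\phi$, and the strict spatial maximum at $\hat x$ then force $(x_\epsilon, t_\epsilon) \to (\hat x, \hat t)$ and $u^*(x_\epsilon, t_\epsilon) \to u^*(\hat x, \hat t) > 0$. For $\epsilon$ small, $(x_\epsilon, t_\epsilon)$ is an interior local maximum in $\cl\Omega(u^*;Q) \cap \{t \leq t_\epsilon\}$, so Definition~\ref{def:visc-test-sub}(ii-1) applied to $\Psi_\epsilon$ yields
\[
F\bigl(D^2\phi(x_\epsilon), D\phi(x_\epsilon), u^*(x_\epsilon, t_\epsilon)\bigr) \leq 0,
\]
and continuity of $F$ produces the desired inequality after sending $\epsilon \to 0$ and then $c \to 0$. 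At points where $u^*(\hat x, \hat t) = 0$, touching from above forces $D\phi(\hat x) = 0$ and $D^2\phi(\hat x) \geq 0$, and the concrete structure $F(X, p, u) = -\trace X - G(u)$ with $G(0) > 0$ immediately gives $F(D^2\phi(\hat x), 0, 0) \leq -G(0) < 0$.

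The supersolution statement, restricted to $\hat x \in \Omega_{\hat t}(u_*; Q)$, is handled dually by flipping the sign of the time penalty: I take $\Psi_\epsilon(x,t) := \phi(x) - (t - \hat t)^2/\epsilon$ and let $(x_\epsilon, t_\epsilon)$ be a minimizer of the lower semicontinuous function $u_* - \Psi_\epsilon$ on the analogous compact set. The bound $(u_* - \Psi_\epsilon)(x_\epsilon, t_\epsilon) \leq 0$ together with $u_* \geq 0$ and local boundedness of $\phi$ yields $(t_\epsilon - \hat t)^2/\epsilon \leq \phi(x_\epsilon) - u_*(x_\epsilon, t_\epsilon) \leq C$, and the same bookkeeping with lower semicontinuity of $u_*$ in place of upper semicontinuity (together with the strict touching from below obtained by subtracting $c|x - \hat x|^2$ from $\phi$) forces $(x_\epsilon, t_\epsilon) \to (\hat x, \hat t)$ and $u_*(x_\epsilon, t_\epsilon) \to u_*(\hat x, \hat t)$. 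Definition~\ref{def:visc-test-super}(ii-1) applied to $\Psi_\epsilon$ then gives the reversed inequality after passing to the limit. I expect the main technical step in both directions to be precisely the convergence of the penalized extremizer to $(\hat x, \hat t)$ rather than to the boundary of the compact set; this is exactly what the strict-touching quadratic perturbation of $\phi$ is engineered to guarantee.
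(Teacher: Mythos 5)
Your argument is correct. The paper itself gives no proof here beyond a one-line citation to Lemma~3.3 of \cite{KP13}, and the mechanism you spell out --- observing that conditions (ii-1) of Definitions~\ref{def:visc-test-sub} and \ref{def:visc-test-super} see only the spatial $2$-jet of the test function, then converting a purely spatial test function $\phi$ into a space-time one by adding $\pm(t-\hat t)^2/\varepsilon$ and sending $\varepsilon\to 0$ (after a $c|x-\hat x|^2$ perturbation to enforce a strict spatial extremum) --- is the standard Crandall--Ishii--Lions penalization that such a reduction to the elliptic slice is based on. One small remark: your final bound $0\le F(D^2\phi+2cI,\cdot,\cdot)-F(D^2\phi,\cdot,\cdot)\le 2n\Lambda c$ is not used with a sign --- only Lipschitz continuity of $F$ in the Hessian argument is needed to pass $c\to 0$ --- which is just as well, since the sign of this difference depends on a Pucci-operator convention that the paper leaves ambiguous (the concrete choice $F(X,p,u)=-\operatorname{trace}X - G(u)$ is nonincreasing in $X$, while the displayed Pucci inequalities as written suggest nondecreasing). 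Your treatment of the case $u^*(\hat x,\hat t)=0$ via $D\phi(\hat x)=0$, $D^2\phi(\hat x)\ge 0$ and $G(0)>0$ is fine; note also that the supersolution statement only needs to be proven on $\Omega_{\hat t}(u_*;Q)$, which is exactly where your restriction to $u_*(\hat x,\hat t)>0$ puts you, so no (ii-2) case arises.
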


\begin{proof}
The proof is analogous to the proof of \cite[Lemma~3.3]{KP13}.
\end{proof}

\subsection{Comparison principle}

\begin{theorem}
\label{th:comparison}
Let $Q$ be a bounded parabolic neighborhood and
let $g_1$ and $g_2$ be two velocity coefficients satisfying \eqref{g-assumption} for which there exists $\hat r > 0$ such that
\begin{align}
\label{conv-order}
\overline g(x,t) := \sup_{\cl B_{\hat r}(x,t) \cap Q} g_1 \leq \inf_{\cl B_{\hat r}(x,t) \cap Q} g_2 =: \underline g(x,t) \qquad \text{for all $(x,t) \in Q$.}
\end{align}
If $u \in \subs(g_1, Q)$ and $v\in \supers(g_2, Q)$
such that $u \strictordwrt{Q} v$ on $\partial_P Q$,
then $u \strictordwrt{Q} v$ in $\cl Q$.
\end{theorem}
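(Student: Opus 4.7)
The plan is to adapt the comparison argument of \cite{Pozar14} to the present setting, treating the points where $g_2 = +\infty$ separately by a time-shift. I would argue by contradiction, assuming that strict separation fails somewhere in $\cl Q$. By Lemma~\ref{le:cross}, the set $\Theta_{u,v;Q}$ is of the form $(-\infty, T)$ with $T < \infty$, and I pick a first crossing point $(\hat x, T) \in \cl Q \cap \cl\Omega(u^{*,Q})$ with $u^{*,Q}(\hat x, T) \geq v_{*,Q}(\hat x, T)$. The assumption $u \strictordwrt{Q} v$ on $\partial_P Q$ together with the semicontinuity of the envelopes forces $(\hat x, T)$ to lie in the (relative) interior of $Q$ in space. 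The buffer condition~\eqref{conv-order} is then exploited in the standard way: a space-time sup-convolution of $u$ of radius $r < \hat r$ produces a subsolution $u^r \in \subs(\bar g, \cdot)$, and a corresponding inf-convolution of $v$ produces a supersolution $v_r \in \supers(\underline g, \cdot)$ on a slightly smaller parabolic neighborhood, with $\bar g \leq \underline g$ pointwise; this reduces matters to the case $g_1 = g_2 =: g$.

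At the crossing point there are then two situations. If $v_{*,Q}(\hat x, T) > 0$, the crossing is in the common interior phase; since $x \mapsto u^{*,Q}(x,T)$ and $x \mapsto v_{*,Q}(x,T)$ are respectively a viscosity sub- and supersolution of $F = 0$ by Proposition~\ref{pr:subs-elliptic}, the strong maximum principle and Hopf lemma for $F$ (which were built into the hypotheses on $F$) yield a contradiction via the standard double-variable / touching argument on $\cl\Omega(u^{*,Q}) \cap \set{t \leq T}$. If instead $v_{*,Q}(\hat x, T) = 0$, so that $(\hat x, T) \in \Gamma(v_{*,Q}; Q)$, and additionally $g_2(\hat x, T) < \infty$, then $g_2$ is continuous near $(\hat x, T)$ by \eqref{g-assumption} and one constructs a radially symmetric classical sub-barrier $\phi$ of \eqref{hsg}, with velocity slightly larger than $g_2$, which touches $v_{*,Q}$ from below from inside $\Omega(u^{*,Q})$ at $(\hat x, T)$; this violates the supersolution property of $v$ and closes the case exactly as in \cite{Pozar14}.

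The genuinely new case is $g_2(\hat x, T) = +\infty$. Here the definition of supersolution forces $(\hat x, T) \in \cl\Omega(v_{*,Q})$, so $(\hat x, T)$ lies on $\Gamma(v_{*,Q}; Q)$, and no finite-velocity barrier can reach it from inside $\Omega(u^{*,Q})$. The remedy is to shift the subsolution forward in time: for small $\tau > 0$ define $u_\tau(x,t) := u(x, t-\tau)$ on $Q \cap \set{t \geq \tau}$. Using the monotonicity of $g$ in time (which in our setting follows from $\rho^E = \rho^E_0 e^{G(0)t}$ being nondecreasing in $t$, so that $g = 1/(1-\min[1,\rho^E])$ is nondecreasing), $u_\tau$ remains a subsolution for $g_1$, and the set $\set{g_2(\cdot, \cdot) = \infty}$ intersected with $\set{t \leq T - \tau}$ is contained in an interior neighborhood of $\cl\Omega(v_{*,Q})$ from the past, where the lower semi-continuity of $g_2$ in \eqref{g-assumption} and the strict separation on $\partial_P$ combine to give slack. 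Applying Step 4 to $u_\tau$ and $v$ on the slab $\set{t \leq T - \tau}$ where $g_2$ is finite in a neighborhood of $\hat x$, one obtains strict ordering up to time $T - \tau$, and then letting $\tau \to 0^+$ together with the semi-continuity of the envelopes recovers strict ordering at $(\hat x, T)$, contradicting the choice of $T$.

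The main obstacle is verifying that the strict separation on $\partial_P$ and the buffer $\hat r$ are truly preserved after both the inf/sup convolution and the time shift; concretely, one has to check that the shrinking of the parabolic neighborhood and the loss incurred in using $g_2(\cdot, \cdot-\tau)$ in place of $g_2$ remain compatible with $\overline g \leq \underline g$ in \eqref{conv-order} uniformly as $\tau \to 0^+$. Once this bookkeeping is done, the core argument reduces to the finite-coefficient comparison from \cite{Pozar14}, and \eqref{g-assumption} provides exactly the right amount of lower semi-continuity to push the time-shift limit through.
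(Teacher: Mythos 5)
Your reduction to a first crossing time via Lemma~\ref{le:cross}, the sup/inf-convolution regularization of radius smaller than $\hat r$, the invocation of the buffer condition \eqref{conv-order}, and the disposal of the two ``ordinary'' crossing cases all follow the structure of the paper's argument. The genuine gap is your Case~3, where $g_2(\hat x, T) = \infty$.

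You propose to handle it by a forward time shift of $u$, justified by the monotonicity of $g$ in time, which you in turn justify by pointing to the specific form $g = 1/(1 - \min[1, \rho^E_0 e^{G(0)t}])$. But Theorem~\ref{th:comparison} is a general comparison principle for arbitrary $g_1, g_2$ satisfying \eqref{g-assumption} and \eqref{conv-order}; it makes no monotonicity-in-time assumption on either coefficient. That extra structure is only invoked later, in Lemma~\ref{le:g-shift-big} and the proof of Theorem~\ref{th:hs-well-posedness}, where a time dilation $v(x, \alpha t + \sigma)$ is used to manufacture the strict initial separation that Theorem~\ref{th:comparison} takes as an input. Your argument would therefore prove a weaker theorem under stronger hypotheses, leaving the theorem as stated unproven. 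More importantly, the time shift is unnecessary: the containment $\set{g_2 = \infty} \subset \cl\Omega(v_{*,Q})$, which you correctly observe, is not an obstruction but precisely the mechanism that eliminates your Case~3. In the paper's proof the inf-convolution $W$ of $v$ over the shrinking sets $\Xi_{r - \delta t}$ satisfies $\set{\underline g = \infty} \cap Q_r \subset \Omega(W; Q_r)$ because the supersolution's support covers $\set{g_2 = \infty}$; since the regularized contact point $(\hat x, \hat t)$ lies in $\Omega^c(W; Q_r)$, one reads off $\underline g(\hat x, \hat t) = \inf_{\cl B_{\hat r}(\hat x, \hat t)} g_2 < \infty$, and then \eqref{conv-order} bounds $g_1$ on the entire buffer ball at the contact. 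The finite-velocity barrier argument then applies exactly as when $g_2$ is finite, and your Case~3 collapses into your Case~2 after regularization.
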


\subsection{Proof of comparison principle}

We can assume that $u \in USC(\cl Q)$ and $v \in LSC(\cl Q)$.

We would like to follow the proof of \cite[Theorem~2.18]{Pozar14}.
We will use the assumption \eqref{conv-order} to justify the use of sup- and inf-convolutions.

The structure of the proof is similar to the previous papers \cite{K03,KP13,Pozar14}, with minor modifications to allow for the unbounded velocity coefficient.
We first regularize the free boundaries of $u$ and $v$ by means of the sup- and inf-convolutions over a set of particular shape to guarantee interior/exterior ball property in both space and space-time.
The set for inf-convolution is decreasing in time to add an additional perturbation, by effectively increasing the free boundary velocity of the supersolution.
Now, if the comparison fails, the regularized solutions must cross.
We first show that due to the continuous expansion of the support of $u$, and the fact that $u$ and $v$ are sub/supersolutions of the elliptic problem, this crossing must happen on the free boundary.
At the first contact point, the boundaries are locally $C^{1,1}$ in space.
Moreover, the velocity coefficient $g_1$ for the subsolution is bounded on the neighborhood of this point.
At the regular contact point it is possible to define weak normal derivatives of the regularized solutions, which must be ordered by Hopf's lemma.
Moreover, we can construct barriers to show that the free boundary velocity law is satisfied with these weak normal derivatives.
An ordering of the free boundary velocities at the crossing point with the additional perturbation above then yields a contradiction.
Therefore the comparison holds.

Let us define the crossing time
\begin{align}
\label{cross-time-uv}
t_0 := \sup \Theta_{u, v; Q},
\end{align}
using the set $\Theta_{u,v;Q}$ defined in \eqref{cross-time-set}.
We observe that $u \strictordwrt{Q} v$ in $\cl Q$ is equivalent to $t_0 = \infty$.

Let us therefore suppose that $t_0 < \infty$ and we will show that this leads to a contradiction.

\subsubsection{Regularization}

We shall use the standard sup/inf-convolutions to regularize the free boundaries at the contact point.
We first introduce the open set $\Xi_r(x,t) \subset \Rn\times\R$ for $(x,t) \in \Rn\times\R$ and $r > 0$ as
\begin{align*}
    \Xi_r(x,t)
    = \set{(y,s) : (\abs{y - x}-r)_+^2 + \abs{s - t}^2 < r^2}.
\end{align*}
Note that $\cl\Xi_r(x,t) \subset B_{\hat r}(x,t)$ if $2r < \hat r$.

Let $T > 0$ be such that $Q \subset \set{t \leq T}$.
For given $0 < r < \hat r/2$, $0 < \delta < \frac r{2T}$ we define
\begin{align*}
Z(x,t) &=  \sup_{\cl \Xi_r(x,t)}u,\\
W(x,t) &= \inf_{\cl \Xi_{r - \delta t}(x,t)} v
\end{align*}
for $(x,t) \in Q_r$ with
\begin{align*}
Q_r := \set{(x,t) \in Q: \cl \Xi_r(x,t) \subset Q}.
\end{align*}
Note that $Q_r$ is a parabolic neighborhood.

The following lemma is standard.

\begin{lemma}
\label{le:W-Z}
For all $r, \delta > 0$ sufficiently small, $Z \in USC(Q_r)$, $W \in LSC(Q_r)$, and
\begin{align*}
Z \strictordwrt{Q_r} W \text{ on } \partial_P Q_r.
\end{align*}
For every $(x,t) \in Q_r$ there exists $(x_u, t_u) \in \cl \Xi_r(x,t) \subset Q$ and $(x_v, t_v) \in \cl \Xi_{r - \delta t}(x,t)$ such that
\begin{align*}
u(x_u, t_u) = Z(x,t) \qquad \text{and} \qquad v(x_v, t_v) = W(x,t).
\end{align*}
Moreover, $x \mapsto Z(x,t)$ is a subsolution of the elliptic problem on $\set{x : (x,t) \in Q_r}$ and $x \mapsto W(x,t)$ is a supersolution of the elliptic problem on $\Omega_t(W; Q_r)$.

The support of $Z$ expands continuously in the sense
\begin{align*}
\cl\Omega(Z; Q_r) \cap Q_r \cap \set{t \leq \tau} \subset \cl{\Omega(Z; Q_r) \cap \set{t < \tau}} \cup \set{\overline g = \infty} \qquad \text{for every } \tau >0.
\end{align*}

Similarly, the support of $W$ is nondecreasing,
\begin{align*}
\text{if $(\xi,\tau) \in \Omega(W; Q_r)$ then $(\xi, t) \in \Omega(W; Q_r)$ for all $(\xi, t) \in Q_r$, $t \geq \tau$,}
\end{align*}
and
\begin{align}
\label{g-inf-W-inter}
\set{\underline g = \infty} \cap Q_r \subset \Omega(W; Q_r)
\end{align}
\end{lemma}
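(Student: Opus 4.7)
The plan is to verify each of the seven assertions in turn, each being a consequence of standard sup/inf-convolution techniques combined with the monotonicity properties of $u$ and $v$ in Definitions~\ref{def:visc-test-sub}--\ref{def:visc-test-super}, with the principal care required by the potentially infinite velocity coefficient. The semicontinuity of $Z$ and $W$ on $Q_r$ and the existence of extremizers $(x_u, t_u) \in \cl\Xi_r(x,t)$ and $(x_v, t_v) \in \cl\Xi_{r - \delta t}(x,t)$ are immediate from compactness and the USC/LSC of $u$ and $v$ on $\cl Q$, using the Hausdorff continuity of $\cl\Xi_r(x,t)$ and $\cl\Xi_{r - \delta t}(x,t)$ in $(x,t)$ and the containment $\cl\Xi_r(x,t) \subset Q$ built into the definition of $Q_r$. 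For the strict separation $Z \strictordwrt{Q_r} W$ on $\partial_P Q_r$, the hypothesis $u \strictordwrt{Q} v$ on $\partial_P Q$ gives a uniform gap $u^* < v_*$ on the compact set $\partial_P Q \cap \cl\Omega(u; Q)$ by semicontinuity, and this gap is preserved under convolutions over sets of diameter at most $2r$ once $r$ and $\delta$ are chosen small enough relative to it.

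The spatial elliptic sub/supersolution claims follow by combining Proposition~\ref{pr:subs-elliptic}---which asserts that each time slice $u^*(\cdot, \hat t)$, resp.\ $v_*(\cdot, \hat t)$, is a standard viscosity subsolution, resp.\ supersolution, of $F = 0$---with the standard invariance of uniformly elliptic viscosity sub/supersolutions under spatial sup- and inf-convolutions. Indeed, on freezing $t$ the definition rewrites as $Z(x, t) = \sup_{\abs{s - t} \leq r}\ \sup_{\abs{y - x} \leq r + \sqrt{r^2 - (s - t)^2}} u(y, s)$, exhibiting $Z(\cdot, t)$ as a supremum of spatial sup-convolutions of elliptic subsolutions (which remain elliptic subsolutions); the analogous formula with infima yields the supersolution property of $W(\cdot, t)$ on $\Omega_t(W; Q_r)$.

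For the continuous expansion of $Z$, at a candidate $(\hat x, \tau) \in \cl\Omega(Z; Q_r) \cap Q_r$ I select a maximizer $(y, s) \in \cl\Xi_r(\hat x, \tau)$ realizing $u^*(y, s) > 0$, and apply the continuous expansion of $u$ at threshold $s$: either $g_1(y, s) = \infty$, which propagates to $\overline g(\hat x, \tau) = \infty$ since $(y, s) \in \cl B_{\hat r}(\hat x, \tau)$, or a sequence $(y_k, s_k) \in \Omega(u; Q) \cap \set{t < s}$ approximates $(y, s)$, in which case the time-shifted choice $t_k := \tau + s_k - s < \tau$ satisfies $\abs{s_k - t_k} = \abs{s - \tau}$ and thus (possibly after a small perturbation of $(y_k, s_k)$ toward the interior of $\cl\Xi_r(\hat x, \tau)$ to handle the boundary case) $Z(\hat x, t_k) > 0$, yielding a sequence $(\hat x, t_k) \to (\hat x, \tau)$ from $\Omega(Z; Q_r) \cap \set{t < \tau}$. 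The support monotonicity of $W$ is the geometric analogue: for $\tau < t$ and $(y, s) \in \cl\Xi_{r - \delta t}(\xi, t)$, the bound $\abs{y - \xi} \leq 2(r - \delta t) < 2(r - \delta \tau)$ together with $r - \delta \tau > r - \delta t$ gives, when $s \geq \tau$, $(y, \tau) \in \cl\Xi_{r - \delta \tau}(\xi, \tau)$ so that $v(y, \tau) > 0$ may be propagated forward by Definition~\ref{def:visc-test-super}(i-1), and when $s < \tau$, direct verification of the torus inequality gives $(y, s) \in \cl\Xi_{r - \delta \tau}(\xi, \tau)$ itself.

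The principal obstacle, and the main departure from \cite{Pozar14}, is the containment $\set{\underline g = \infty} \cap Q_r \subset \Omega(W; Q_r)$: if $\underline g(x, t) = \infty$ then by definition $g_2 \equiv \infty$ on the compact set $\cl B_{\hat r}(x, t) \cap Q$, and in particular on the nonempty open ball $B_{\hat r}(x, t) \cap Q$; the supersolution condition Definition~\ref{def:visc-test-super}(i-2) forces this open ball into $\cl\Omega(v_*; Q)$, and the topological fact that the boundary of the relatively open set $\Omega(v_*; Q)$ has empty interior in $Q$ then forces the entire open ball into $\Omega(v_*; Q)$ itself. Consequently $v_* > 0$ on the compact set $\cl\Xi_{r - \delta t}(x, t) \subset \cl B_{\hat r}(x, t) \cap Q$, and lower semicontinuity together with compactness yield $W(x, t) = \min_{\cl\Xi_{r - \delta t}(x, t)} v > 0$, as required.
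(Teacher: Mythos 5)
Your proposal follows the paper's outline closely, and most of the steps are correct and essentially identical in spirit: semicontinuity and the existence of extremizers, the strict separation on $\partial_P Q_r$ from a compactness/uniform-gap argument, the elliptic sub/supersolution property of the time-slices of $Z$ and $W$ via Proposition~\ref{pr:subs-elliptic}, the continuous expansion of the support of $Z$ from that of $u$, and the geometric monotonicity-in-time of the support of $W$ all match the paper's reasoning. The problem is the final item, \eqref{g-inf-W-inter}, which you yourself identify as the principal obstacle: your topological argument there is wrong. You claim that since $\Omega(v_*;Q)$ is relatively open and its boundary has empty interior in $Q$, the containment $B_{\hat r}(x,t)\cap Q\subset\cl\Omega(v_*;Q)$ forces $B_{\hat r}(x,t)\cap Q\subset\Omega(v_*;Q)$. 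This implication is false: the boundary of an open set always has empty interior, yet an open ball contained in the \emph{closure} of an open set may contain boundary points. Take $\Omega=B_1(0)\setminus\{0\}\subset\Rn$: then $\cl\Omega=\cl B_1(0)\supset B_1(0)$ and $\partial\Omega$ has empty interior, but $0\in B_1(0)\setminus\Omega$. Nothing in your argument rules out $v_*$ vanishing at scattered points of $\cl\Xi_{r-\delta t}(x,t)$, so the conclusion $W(x,t)=\min_{\cl\Xi_{r-\delta t}(x,t)}v>0$ is not justified.

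The correct mechanism for upgrading the closure containment to strict positivity of $v_*$ on the compact set is quantitative and comes from the equation, as flagged in Remark~\ref{re:super-phase-containment}: one uses stationary strict subbarriers for the elliptic operator $F$ in spatial balls contained in $\set{g_2=\infty}$, together with the monotonicity in time of the support of $v$, to produce a uniform positive lower bound for $v_*$ in the interior of $\cl\Omega(v_*;Q)$. The paper's own proof is admittedly terse here (it states only $B_\rho(\xi,\tau)\cap Q_r\subset\cl\Omega(W;Q_r)$ and relies on Remark~\ref{re:super-phase-containment} for the passage to the open set), but the point-set shortcut you substitute is not a valid replacement for that barrier argument. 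I would rewrite the last step by constructing an explicit elliptic subbarrier in a spatial ball sitting inside $\set{g_2=\infty}$ at a slightly earlier time and comparing with $v$ via Definition~\ref{def:visc-barrier} to get $v_*\geq c>0$ on $\cl\Xi_{r-\delta t}(x,t)$.
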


\begin{remark}
We can prove a stronger result that actually $Z \in \subs(\overline g; Q_r)$ and $W \in \supers(\underline g; Q_r)$, where $\overline g$ and $\underline g$ are sup/inf of $g$ over $\cl \Xi_r$, but we actually never need this.
\end{remark}

\begin{proof}
The semicontinuity and existence of points $(x_u, t_u)$ and $(x_v, t_v)$ is standard from semicontinuity of $u$ and $v$.
We can choose $r < \hat r/2$ and $\delta < \frac T{2r}$ sufficiently small so that $Z$ and $W$ are strictly ordered on $\partial_P Q_r$ since $u$ and $v$ are strictly ordered on $\partial_P Q$.

To check that $x \mapsto Z(x,t)$ and $x \mapsto W(x,t)$ are a subsolution and a supersolution of the elliptic problem in $\set{x : (x,t) \in Q_r}$ and $\Omega_t(W; Q_r)$, respectively,
for every $t \in \R$, we just need to recall that
they are the supremum of subsolutions,
respectively the infimum of supersolutions,
of the elliptic problem due to Proposition~\ref{pr:subs-elliptic}

The continuous expansion of $Z$ follows from the continuous expansion of $u$.
Indeed, if $(\xi, \tau) \in \cl\Omega(Z; Q_r) \cap Q_r$
and $\overline g(\xi, \tau) < \infty$,
then $g_1 < \infty$ on $B_{\hat r}(\xi, \tau)$.
Moreover, there exists
$(\xi_u, t_u) \in \cl\Omega(u; Q) \cap \cl \Xi_r(\xi, \tau) \subset B_{\hat r}(\xi, \tau)$.
By the continuous expansion of $u$, we have
\begin{align*}
(\xi_u, t_u) \in \cl{\Omega(u; Q) \cap \set{t < t_u}}.
\end{align*}
By the definition of the sup-convolution, we conclude that
\begin{align*}
(\xi, \tau) \in \cl{\Omega(Z; Q_r) \cap \set{t < \tau}}.
\end{align*}

To see that the support of $W$ is nondecreasing, suppose that $(\xi, \tau) \in \Omega(W; Q_r)$.
Then by definition $\cl\Xi_{r- \delta \tau}(\xi, \tau) \subset \Omega(v; Q)$.
Since $v$ is a supersolution, its support is nondecreasing, Definition~\ref{def:visc-test-super}(i-1),
and therefore $\cl\Xi_{r - \delta t}(\xi, t) \subset \cl\Xi_{r - \delta \tau}(\xi, t) \subset \Omega(v; Q)$
for all $t \geq \tau$.
We conclude that $(\xi, t) \in \Omega(W; Q_r)$ for all $t \geq \tau$.

Finally, if $(\xi, \tau) \in Q_r$ with $\underline g(\xi, \tau) = \infty$,
then $g_2 = \infty$ on $\cl B_{\hat r}(\xi, \tau)$.
By Definition~\ref{def:visc-test-super}(i-2) we have $B_{\hat r}(\xi, \tau) \subset \cl\Omega(v; Q)$.
Therefore $B_\rho(\xi, \tau) \cap Q_r \in \cl\Omega(W; Q_r)$ for small $\rho > 0$
such that $B_{\hat r - \rho}(\xi, \tau) \supset \cl \Xi_{r - \delta \tau}(\xi, \tau)$.
\end{proof}

\subsubsection{Contact}

Let us define the contact time
\begin{align*}
\hat t := \sup\Theta_{Z, W; Q_r} < t_0 < \infty
\end{align*}
where $t_0$ was introduced as the crossing time in \eqref{cross-time-uv}.
We will show that this leads to a contradiction.

\begin{lemma}
\label{zw-contact-time}
$Z = W = 0$ on $\Omega^c_{\hat t}(W; Q_r)$ and $Z < W$ on $\Omega_{\hat t}(W; Q_r)$.
In particular, $Z \leq W$ on $Q_r \cap \set{t \leq \hat t}$.
\end{lemma}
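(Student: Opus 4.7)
The plan is to establish the two pointwise claims at $t = \hat t$ separately; the concluding inequality $Z \leq W$ on $Q_r \cap \{t \leq \hat t\}$ then follows immediately. I would begin by observing that the openness of $\Theta_{Z,W;Q_r}$ from Lemma~\ref{le:cross} together with its form $(-\infty, \hat t)$ gives strict separation $Z \strictordwrt{Q_r} W$ on $\cl Q_r \cap \{t \leq \tau\}$ for every $\tau < \hat t$. Since $Z \in USC(Q_r)$ and $W \in LSC(Q_r)$ coincide with their semicontinuous envelopes inside $Q_r$, this unpacks to $Z < W$ on $\cl\Omega(Z;Q_r) \cap Q_r \cap \{t \leq \tau\}$, and combined with $Z \equiv 0$ off $\cl\Omega(Z; Q_r)$, letting $\tau \uparrow \hat t$ yields $Z \leq W$ throughout $Q_r \cap \{t < \hat t\}$.

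For the exterior identity $Z = W = 0$ on $\Omega^c_{\hat t}(W;Q_r)$, suppose for contradiction that $(x_0, \hat t) \in \Omega^c_{\hat t}(W;Q_r)$ and $Z(x_0, \hat t) > 0$. The continuous expansion of $Z$ from Lemma~\ref{le:W-Z} leaves two cases. If $\overline g(x_0, \hat t) = \infty$, hypothesis \eqref{conv-order} forces $\underline g(x_0, \hat t) = \infty$, and \eqref{g-inf-W-inter} then places $(x_0, \hat t) \in \Omega(W; Q_r)$, contradicting $W(x_0, \hat t) = 0$. Otherwise $(x_0, \hat t) \in \cl{\Omega(Z; Q_r) \cap \{t < \hat t\}}$, and I would extract a sequence $(x_n, t_n) \in \Omega(Z; Q_r)$ with $t_n < \hat t$ and $(x_n, t_n) \to (x_0, \hat t)$; strict separation gives $W(x_n, t_n) > Z(x_n, t_n) > 0$, and the monotonicity of the support of the inf-convolution $W$ lifts this to $W(x_n, \hat t) > 0$. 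Combining this with the regularity of the inf-convolution $W$ at the limit point---which forces $W(x_n, \hat t) \to 0$---and an approaching sequence from the past realizing the USC envelope of $Z$ at $(x_0, \hat t)$, one concludes $Z(x_0, \hat t) \leq 0$, the required contradiction.

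For the interior strict inequality $Z < W$ on $\Omega_{\hat t}(W;Q_r)$, I would appeal to Proposition~\ref{pr:subs-elliptic}: on each connected component $U$ of $\Omega_{\hat t}(W; Q_r)$, the time slice $x \mapsto Z(x,\hat t)$ is an elliptic viscosity subsolution and $x \mapsto W(x,\hat t)$ an elliptic viscosity supersolution of $F(D^2\psi, D\psi, \psi) = 0$. The previous two steps give $Z \leq W$ on $\cl U$ with $Z = W = 0$ on $\partial U$. The strong maximum principle and Hopf lemma for uniformly elliptic operators (available via \cite{ArmstrongThesis}) then yield either $Z < W$ throughout $U$ or $Z \equiv W$ on $\cl U$. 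In the second alternative, the extra time perturbation $\delta > 0$ built into the definition $W = \inf_{\cl\Xi_{r - \delta t}} v$ effectively grants $W$ a strictly larger free-boundary velocity than $v$, and one can pull the identification $Z = W$ back to times slightly below $\hat t$ to contradict the strict separation from the first step.

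The main obstacle is the exterior case: pure semicontinuity is too weak to preclude $Z(x_0, \hat t) > 0$ at a point on $\partial\Omega_{\hat t}(W)$, and the extra leverage comes from the regularity of the inf-convolution $W$ near the contact point together with the continuous expansion of $\Omega(Z)$, which jointly force the approaching sequence $(x_n, t_n)$ to satisfy $W(x_n, t_n) \to 0$, thereby pinning $Z(x_0, \hat t)$ to $0$ via its USC envelope characterization.
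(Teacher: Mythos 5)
Your overall plan---treat the exterior set $\Omega^c_{\hat t}(W;Q_r)$ and the interior $\Omega_{\hat t}(W;Q_r)$ separately, using continuous expansion, monotonicity of $\Omega(W)$, and the elliptic strong maximum principle---is the right shape, and the opening reduction ($Z \leq W$ on $Q_r \cap \{t < \hat t\}$ via Lemma~\ref{le:cross}) is fine. But the core of your argument in the exterior case has a genuine gap. You produce a sequence $x_n \to x_0$ with $W(x_n, \hat t) > 0$ (equivalently $x_n \in V := \Omega_{\hat t}(W;Q_r)$) and then try to conclude by claiming ``the regularity of the inf-convolution $W$ at the limit point forces $W(x_n,\hat t) \to 0$.'' There is no such regularity available: $W$ is only lower semicontinuous, and it is perfectly consistent with $W(x_0,\hat t)=0$ and $W$ LSC that $W(x_n,\hat t)$ stay bounded away from zero; LSC controls only $\liminf$ from below, not any limit from above. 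Consequently you never actually reach $Z(x_0,\hat t)\le 0$. What the paper uses instead---and what your argument misses---is the \emph{geometry} of $V^c$ coming from the inf-convolution: $V$ has the interior ball property with balls of radius comparable to $r-\delta\hat t$, so $V^c$ has an exterior ball $B_{r/2}(y)$ at every point of $\partial V$, and one may as well take $\xi$ to lie in such an open ball $B_{r/2}(y) \subset V^c$. The contradiction is then purely topological: your sequence $x_n$ lies in $V$, yet $x_n \to \xi$ which sits in the \emph{open} ball $B_{r/2}(y) \subset V^c$, so $x_n$ must eventually lie in $V^c$ as well---impossible. No claim about values of $W$ is needed. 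One then propagates $z=0$ from these balls to all of $D \setminus V$ by a covering argument, using that $z$ is a nonnegative subsolution of the elliptic problem.

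There is a secondary issue in the interior case. You assert ``the previous two steps give $Z \leq W$ on $\cl U$,'' but your step~1 only gives $Z \leq W$ for $t < \hat t$, and step~2 gives $Z = W = 0$ on $\partial U$ at time $\hat t$; neither gives $Z \leq W$ inside $U$ at time $\hat t$, which is precisely what you are trying to prove. You need to invoke the comparison principle for the elliptic problem ($z$ viscosity subsolution, $w$ viscosity supersolution, $z \leq w$ on $\partial U$, $F_u>0$) to get $z \leq w$ on $U$, before the strong maximum principle can upgrade equality at a single interior point to identity on the component. Finally, your mechanism for ruling out $z \equiv w$ when $\cl U$ does not touch $\partial_P Q_r$---invoking the extra velocity perturbation $\delta$ and ``pulling back in time''---is conceptually in the right spirit but is not the paper's argument and is not made rigorous: the paper instead uses the interior-ball property of the sup-convolution to exhibit a ball on which $z$ is constant and strictly below $p_M$, hence a strict subsolution of the elliptic equation there, which no supersolution can equal.
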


\begin{proof}
Let us denote
\begin{align*}
z(x) := Z(x, \hat t), \qquad w(x) := W(x, \hat t)
\end{align*}
for $x \in D := \set{x : (x, \hat t) = Q_r}$.
Recall that $z$ and $w$ are a subsolution and a supersolution, respectively,
of the elliptic problem by Proposition~\ref{pr:subs-elliptic}.
The set $V := \Omega_{\hat t}(W; Q_r)$ is open, and has an exterior ball of radius $r/2$ at every point of its boundary.
By \eqref{g-inf-W-inter}, $\overline g \leq \underline g < \infty$ on $D \setminus V$.
We know from the definition of the contact time that
$\cl\Omega(Z; Q_r) \cap Q_r \cap \set{t < \hat t} \subset \Omega(W; Q_r)$.
Let $y$ be such that $B_{r/2}(y) \subset V^c$, we must have $z = 0$ on $B_{r/2}(y) \cap D$
by the continuous expansion of the support of $Z$ and the monotonicity of the support of $W$
in Lemma~\ref{le:W-Z} and \eqref{g-inf-W-inter}.
$z$ is a subsolution of the elliptic problem
and therefore $z = 0$ on $\cl B_{r/2}(y) \cap D$.
By covering $D \setminus V$ by such balls, we conclude that  $z = 0$ on $D \setminus V$.

Let $\hat x \in \cl V$ such that $z(\hat x) \geq w(\hat x)$.
We only need to prove that $\hat x \in \partial V$, and the conclusion then follows.
Let $U$ be the connected component of $V$ for which $\hat x \in U$.

We know that $z = 0$ on $\partial U \cap D$ from above, and therefore $z \leq w$ on $\partial U$.
If $\cl U \times \set{\hat t} \cap \partial_P Q_r \neq 0$,
then the strong maximum principle for the elliptic problem
implies that $z < w$ on $U$, a contradiction.

If $\cl U \times \set{\hat t} \subset Q_r$, we have to give a different argument.
Let $y \in U$ be a point of maximum of $z$ on $\cl U$.
Clearly $z(y) > 0$.
By the interior ball property, there exists $\xi$ such that $y \in B_r(\xi)$ and $z = z(y)$ on $B_r(\xi)$.
Since $\psi = c$ for $c \geq p_M$ is a supersolution of the elliptic problem on $U$, the strong maximum principle implies $z(y) < p_M$.
In particular, $z$ is a strict subsolution of the elliptic problem on $B_r(\xi)$.
We therefore cannot have $w \equiv z$ on $B_r(\xi)$.
We conclude that $z < w$ on $U$ by the strong maximum principle.
\end{proof}

We know from Lemma~\ref{le:cross} that $Z \notstrictordwrt{Q_r} W$
in $\cl{Q_r} \cap \set{t \leq \hat t}$
.
Therefore due to Lemma~\ref{zw-contact-time} we can find
\begin{align*}
(\hat x, \hat t) \in \cl\Omega(Z; Q_r) \cap \Omega^c(W; Q_r).
\end{align*}

Due to Lemma~\ref{le:W-Z} there exist points
\begin{align*}
(x_u, t_u) \in \partial\Xi_r(\hat x, \hat t) \cap \partial\Omega(u; Q) \quad \text{and} \quad (x_v, t_v) \in \partial\Xi_{r - \delta\hat t}(\hat x, \hat t) \cap \partial\Omega(v; Q).
\end{align*}
We have $\cl \Xi_r(\hat x, \hat t) \subset \Omega^c(u)$ and $\Xi_r(x_u, t_u) \cap Q_r \subset \Omega(Z)$.
Since $Z\leq W$ on $Q_r \cap \set{t \leq \hat t}$, we have
\begin{align*}
\cl\Xi_{r - \delta t}(x, t) \subset \Omega(v) \qquad \text{for } (x,t) \in \Xi_r(x_u, t_u) \cap \set{t \leq \hat t}.
\end{align*}

By ordering we have
\begin{align*}
\cl \Xi_r(x_u, t_u) \cap \cl \Xi_{r - \delta\hat t}(x_v, t_v) \cap \set{t \leq \hat t} = \set{(\hat x, \hat t)}.
\end{align*}

\subsubsection{Free boundary velocity}

Let $m_Z \in [-\infty, \infty]$ denote the normal velocity of $\partial \Xi_r(\hat x, \hat t)$ at $(x_u, t_u)$, which can be expressed as
\begin{align*}
m_Z = \frac{t_u - \hat t}{\sqrt{r^2 - \pth{t_u - \hat t}^2}}.
\end{align*}
Let us define the set
\begin{align*}
E := \bigcup_{\substack{(x,t) \in \Xi_r(x_u, t_u)\\t \leq \hat t}} \Xi_{r - \delta t} (x,t).
\end{align*}
Note that $E \subset \Omega(v)$ and $(x_v, t_v) \in \partial E$.
Let $m_W$ denote the normal velocity of the boundary of $E$ at $(x_v, t_v)$.
Since $\Omega(v)$ is nondecreasing, we must have $m_W \geq 0$.
But we can also estimate $m_Z - \delta \geq m_W$ and therefore
\begin{align*}
m_Z - \delta \geq m_W \geq 0.
\end{align*}
We conclude in particular that $t_u > \hat t \geq t_v$.

\subsubsection{Gradients and velocities}

Since $\set{g_2 = \infty} \subset \cl\Omega(v)$ and $(x_v, t_v) \in \Omega^c(v)$, we must have $(x_v, t_v) \in \cl{\set{g_2 < \infty}}$.
Since $(x_u, t_u), (x_v, t_v) \in \cl\Xi_r(\hat x, \hat t) \subset B_{\hat r}(\hat x, \hat t)$, we can estimate
\begin{align*}
g_1(x_u, t_u) \leq \sup_{\cl\Xi_r(\hat x, \hat t)} g_1 \leq \sup_{\cl B_{\hat r}(\hat x, \hat t)} g_1 \leq \inf_{\cl B_{\hat r}(\hat x, \hat t)} g_2 < \infty.
\end{align*}

Let $\nu$ be the unit outer normal to $\set{x: (x, \hat t) \in \Xi_r(x_u, t_u)}$.
We can define the ``weak gradients''
\begin{align*}
\alpha := \limsup_{h\to 0+} \frac{Z(\hat x - h \nu, \hat t)}h, \qquad \beta := \liminf_{h\to 0+} \frac{W(\hat x - h \nu, \hat t)}h.
\end{align*}
Since $\hat x$ is a regular point of the boundary $\partial U$, weak Hopf's lemma implies $\alpha \leq \beta$, $\alpha < \infty$ and $\beta > 0$.

As $t_u > 0$, we have enough space to put a barrier above $u$ as in \cite{KP13} in a neighborhood of $(x_u, t_u)$ and prove that
\begin{align*}
m_Z \leq g_1(x_u, t_u) \alpha < \infty.
\end{align*}
Therefore $m_W < \infty$.
In particular, $t_v > \hat t - r + \delta \hat t$.
Therefore we have enough space to put a barrier under $v$ as in \cite{KP13} in a neighborhood of $(x_v, t_v)$ and prove that
\begin{align*}
\infty > m_Z - \delta \geq m_W \geq g_2(x_v, t_v) \beta.
\end{align*}
Note that a subbarrier does not need $g_2 < \infty$ in the complement of its support
Definition~\ref{def:barrier}.
In particular, $g_2(x_v, t_v) < \infty$.
Putting all together, we have
\begin{align*}
m_Z \leq g_1(x_u, t_u) \alpha \leq g_2(x_v, t_v) \beta \leq m_Z -\delta,
\end{align*}
a contradiction.

\subsection{Well-posedness of \eqref{FB}}

We have the following existence and uniqueness result for \eqref{FB}.

\begin{theorem}[Well-posedness]
\label{th:hs-well-posedness}
Suppose that $\Omega_0 \subset \Rn$ is a bounded open set with a $C^{1,1}$ boundary.
Moreover, let $\rho^E_0 \in C(\Rn)$ be a function such that $0 \leq \rho^E_0 < 1$,
$\lim_{\abs x\to\infty} \rho^E_0(x) = 0$.
Then there exists a unique viscosity solution $u$ of \eqref{FB}
with initial support $\Omega_0$ and initial density $\rho^E_0$
in the sense that $u$ is a viscosity solution of \eqref{hsg}
in $Q = \Rn \times (0,\infty)$
with
\begin{align}
\label{g-wellposedness}
g(x,t) :=
\frac 1{1 - \min\set{\rho^E_0(x) e^{t G(0)}, 1}},
\end{align}
where $g = \infty$ if the denominator is $0$,
and $u$ satisfies the initial condition as
\begin{align*}
\set{x : u^{*,Q}(x, 0) > 0} = \set{x : u_{*,Q}(x, 0) > 0} = \Omega_0.
\end{align*}
The solution is unique in the sense that if $u$ and $v$ are two viscosity solutions of
\eqref{hsg} with the same initial data, then
\begin{align}
\label{uniquenesssolutions}
u^{*,Q} = v^{*,Q}, \qquad u_{*,Q} = v_{*,Q}.
\end{align}
\end{theorem}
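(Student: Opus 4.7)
The proof follows the standard Perron/comparison scheme, but with crucial exploitation of the time-monotonicity of $g$ defined in \eqref{g-wellposedness}. Since $G(0) > 0$, the map $t \mapsto \rho^E_0(x) e^{tG(0)}$ is strictly increasing in $t$ with a uniform positive rate, so $g(\cdot, t)$ is nondecreasing in $t$; moreover, on $\set{g < \infty}$ the coefficient $g$ inherits continuity from $\rho^E_0$, and the hypothesis \eqref{g-assumption} holds. This temporal strict monotonicity is the key device replacing the finiteness of $g$ required in \cite{Pozar14}.

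\textbf{Uniqueness.} Given two viscosity solutions $u, v$ with the same initial data, I would fix a bounded parabolic cylinder $Q_T = B_R \times (0,T]$ containing the region of interest and, for small $\epsilon > 0$, compare $u$ with the time-shifted supersolution $v_\epsilon(x,t) := v(x, t+\epsilon)$, which solves \eqref{hsg} with the shifted coefficient $g_\epsilon(x,t) := g(x, t+\epsilon) \ge g(x,t)$. To invoke Theorem~\ref{th:comparison} with $g_1 = g$, $g_2 = g_\epsilon$, I need to verify \eqref{conv-order}, namely a uniform ball-radius $\hat r > 0$ (depending on $\epsilon$) such that $\sup_{\cl B_{\hat r}} g \le \inf_{\cl B_{\hat r}} g_\epsilon$. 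At any point where $g < \infty$, this follows from the continuity of $g$ together with the strict gap $\rho^E_0(x)(e^{(t+\epsilon)G(0)} - e^{tG(0)})$ on the bounded region; at any point where $g = \infty$, the continuity of $\rho^E_0$ and the strict inequality $e^{-(t+\epsilon)G(0)} < e^{-tG(0)}$ yield a spatial neighborhood on which $g_\epsilon = \infty$ as well, so both sides equal $+\infty$. The strict ordering on the parabolic boundary of $Q_T$ is obtained at $t=0$ from the fact that $v_\epsilon(\cdot, 0) = v(\cdot, \epsilon)$ has positive phase strictly containing $\overline{\Omega_0}$ (since the free boundary of $v$ moves with positive velocity, by Hopf's lemma and the definition of viscosity supersolution), while on $\partial B_R$ one chooses $R$ large so that both functions vanish there. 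Theorem~\ref{th:comparison} then yields $u \prec_{Q_T} v_\epsilon$ on $\cl Q_T$; sending $\epsilon \to 0^+$ and exchanging the roles of $u$ and $v$ gives \eqref{uniquenesssolutions}.

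\textbf{Existence.} I would use Perron's method. Construct an initial subbarrier $\underline u$ from the elliptic profile $p_0$ solving $-\Delta p_0 = G(p_0)$ in $\Omega_0$ and extended by zero, which is trivially a subsolution since its support is stationary and $g > 0$; construct a supersolution $\overline u$ dominating all candidates, for example a radial Hele-Shaw supersolution expanding fast enough to cover any bounded set and large enough to dominate initial data. Define
\[
u(x,t) := \sup \set{ w(x,t) : w \in \subs(g, Q),\ w \le \overline u,\ \Omega_0 \subset \set{w_*(\cdot, 0) > 0}}.
\]
Standard arguments show $u^{*,Q} \in \subs(g,Q)$ (the supremum of subsolutions is a subsolution) and $u_{*,Q} \in \supers(g, Q)$ (bumping argument at a would-be violation point). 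The key point addressing Remark~\ref{re:super-phase-containment} is the phase-containment $\set{g = \infty} \subset \cl\Omega(u_{*,Q})$: this is ensured by including in the Perron family the stationary interior-elliptic subsolutions of the form $\alpha(c - |x-x_0|^2)_+$ placed inside $\set{g=\infty}$ together with $\Omega_0$, which provide a uniform positive lower bound on every compactly contained subset.

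\textbf{Main obstacle.} The delicate step is the verification of the ball-buffer condition \eqref{conv-order} near the transition set where $g$ jumps from finite to infinite; handling this requires using both the strict temporal monotonicity of $\rho^E_0 e^{tG(0)}$ and the continuity of $\rho^E_0$ to establish a spatial neighborhood of $\set{g(\cdot,t)=\infty}$ inside $\set{g_\epsilon(\cdot, t)=\infty}$. A secondary difficulty is ensuring the phase-containment axiom in the Perron construction, which I handle using the interior elliptic subsolutions as indicated above.
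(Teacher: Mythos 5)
There is a genuine gap in your uniqueness step. You propose to compare $u$ with the pure time shift $v_\epsilon(x,t) := v(x,t+\epsilon)$, whose velocity coefficient is $g_\epsilon(x,t) = g(x,t+\epsilon)$, and you claim the ball-buffer condition \eqref{conv-order} can be verified ``from the continuity of $g$ together with the strict gap $\rho^E_0(x)\big(e^{(t+\epsilon)G(0)}-e^{tG(0)}\big)$.'' But that gap is \emph{not} bounded below by a positive constant on $Q_T$: writing $a=\rho^E_0(x)e^{tG(0)}$, one has
\begin{align*}
g(x,t+\epsilon)-g(x,t)=\frac{a\,(e^{\epsilon G(0)}-1)}{(1-a)(1-ae^{\epsilon G(0)})},
\end{align*}
which tends to $0$ as $\rho^E_0(x)\to 0$ and is identically $0$ where $\rho^E_0(x)=0$. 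Since the hypotheses permit $\rho^E_0$ to vanish (only $0\le\rho^E_0<1$ and $\rho^E_0\to 0$ at infinity are assumed), this degeneracy does occur. Concretely, if $\rho^E_0(x_0)=0$ but $\rho^E_0>0$ nearby (say $\rho^E_0(x)\sim c|x-x_0|$), then $g(x_0,s+\epsilon)=1$ for all $s$, while $g(x,t)>1$ for every nearby $x\ne x_0$ and every $t$; hence $\sup_{\cl B_{\hat r}(x_0,s)}g\le\inf_{\cl B_{\hat r}(x_0,s)}g_\epsilon$ fails for \emph{every} $\hat r>0$. So \eqref{conv-order} cannot be verified for a pure shift, and Theorem~\ref{th:comparison} cannot be invoked as you propose.

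The paper's remedy is to combine the shift with a temporal dilation, $w(x,t)=v(x,\alpha t+\sigma)$ with $\alpha>1$ (the paper takes $\alpha=1+\sigma$), which turns $w$ into a supersolution with coefficient $g_2(x,t)=\alpha\,g(x,\alpha t+\sigma)$. The multiplicative factor is what produces a \emph{uniform} buffer: since $g\ge 1$ everywhere (because $\rho^E_0\ge 0$), on the compact set where $g$ is bounded one has $g_2(x,t)\ge\alpha g(x,t)=g(x,t)+(\alpha-1)g(x,t)\ge g(x,t)+(\alpha-1)$, and the positive constant $(\alpha-1)$ absorbs the modulus of continuity of $g$. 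This is precisely the content of Lemma~\ref{le:g-shift-big}, whose proof also handles the interface between $\set{g<\infty}$ and $\set{g=\infty}$ by a compactness argument using the monotonicity in time and the epigraph structure of $\set{g=\infty}$. Your discussion of the $g=\infty$ region and your Perron existence construction (using interior elliptic subsolutions to ensure the phase-containment of Remark~\ref{re:super-phase-containment}) are in line with the paper, but the uniqueness step needs the dilation, not just the shift.
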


In the proof of the uniqueness in this theorem
we also obtain the following version of the comparison principle.

\begin{theorem}
\label{th:comparisonhsg}
Let $\Omega_0$ and $\rho^E_0$ be as in Theorem~\ref{th:hs-well-posedness}.
Suppose that $u$ is a viscosity subsolution and $v$ is a viscosity supersolution of \eqref{hsg}
in $Q= \R^n \times (0, \infty)$ with $g$ as in \eqref{g-wellposedness},
with the initial data
\begin{align*}
\set{x : u^{*,Q}(x, 0) > 0} \subset \Omega_0 \subset \set{x : v_{*,Q}(x, 0) > 0}.
\end{align*}
Then
\begin{align*}
u^{*,Q} \leq v^{*,Q}, \qquad u_{*,Q} \leq v_{*,Q}.
\end{align*}
\end{theorem}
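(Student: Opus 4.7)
The plan is to deduce Theorem~\ref{th:comparisonhsg} from the general comparison principle Theorem~\ref{th:comparison} after a small perturbation of the supersolution $v$. Since $u$ and $v$ share the velocity coefficient $g$, the strict ordering hypothesis \eqref{conv-order} cannot be verified directly; the fix is a small forward time shift of $v$ combined with an additive inflation. The key structural features of $g$ from \eqref{g-wellposedness} are that, because $G(0)>0$ (as $G'<0$ and $G(p_M)=0$ with $p_M>0$), the quantity $\rho^E_0(x) e^{tG(0)}$ is nondecreasing in $t$ so $g$ is nondecreasing in $t$, and moreover $g$ is continuous on $\set{g < \infty}$ and lower semicontinuous everywhere, which is the ``certain regularity'' needed for the shift.

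For $h > 0$ small, set $v_h(x, t) := v(x, t + h)$. Then $v_h \in \supers(g_h, Q)$ with $g_h(x, t) := g(x, t + h) \geq g(x, t)$; the support containment $\set{g_h = \infty} \subset \cl\Omega(v_{h,*})$ is inherited from the corresponding property of $v$ at time $t+h$. For any fixed $\epsilon > 0$, $v_h$ is in fact also a supersolution for the inflated coefficient $g_\epsilon := g_h + \epsilon$: a subbarrier for the $g_\epsilon$-problem is automatically a subbarrier for the $g_h$-problem (the inequality $\phi_t - g\abs{D\phi}^2 < -\delta$ becomes only stricter when $g$ increases), and $\set{g_\epsilon = \infty} = \set{g_h = \infty}$. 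The pair $(g, g_\epsilon)$ then satisfies \eqref{conv-order} on any bounded sub-cylinder $Q_{R,T} := B_R \times (0,T) \subset Q$: away from $\partial\set{g = \infty}$, continuity of both coefficients together with $g_\epsilon \geq g + \epsilon$ supplies a uniform gap for $\hat r$ small enough (depending on the modulus of continuity of $\rho^E_0$ on $\cl B_R$); at $\partial\set{g = \infty}$, lower semicontinuity of $g_h$ gives $\inf_{B_{\hat r}} g_h = +\infty$ for $\hat r$ small, so the inequality holds trivially.

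To obtain strict separation on the parabolic boundary of $Q_{R,T}$, choose $R$ large enough that both $u$ and $v_h$ vanish on $\partial B_R \times (0, T)$ (possible by the decay of $\rho^E_0$ at infinity and the boundedness of $\Omega_0$, using the stationary elliptic supersolution $\equiv p_M$ as an upper barrier). On the initial slice, $u^{*,Q}(\cdot, 0)$ is supported in $\cl\Omega_0$ and is an elliptic subsolution vanishing on $\partial\Omega_0$ (Proposition~\ref{pr:subs-elliptic}), while $v_h(\cdot, 0) = v(\cdot, h)$ has support strictly containing $\cl\Omega_0$: Hopf's lemma applied to the classical solution of $-\Delta v_*(\cdot, 0) = G(v_*(\cdot, 0))$ on $\Omega_0$ (admissible because $\partial \Omega_0 \in C^{1,1}$) gives a strictly positive inward normal derivative on $\partial\Omega_0$, so the free boundary velocity $V = g\abs{Dv} \geq \abs{Dv}$ is strictly positive at $t = 0^+$ and $v$ expands over the interval $(0, h)$. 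The strong maximum principle for $-\Delta = G$ then yields $u^{*,Q} \strictordwrt{Q} v_{h,*}$ on the initial slice. Theorem~\ref{th:comparison} now gives $u \strictordwrt{Q} v_h$ on $\cl{Q_{R,T}}$, i.e., $u^{*,Q}(x, t) \leq v^{*,Q}(x, t + h)$ and $u_{*,Q}(x, t) \leq v_{*,Q}(x, t + h)$. Sending $R, T \to \infty$, $\epsilon \to 0$, and finally $h \to 0^+$ — using upper semicontinuity of $v^{*,Q}$ and a right-continuity argument for $v_{*,Q}$ based on the monotonicity of $\Omega(v_*)$ in $t$ — we arrive at $u^{*,Q} \leq v^{*,Q}$ and $u_{*,Q} \leq v_{*,Q}$.

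The central obstacle is the verification of \eqref{conv-order} at points where $\rho^E_0 = 0$: the pure time shift does not create any gap there, since both $g$ and $g_h$ equal $1$, so the constant additive $\epsilon$-inflation of $g_h$ is essential. Crucially, this inflation does not enlarge the set $\set{g_h = \infty}$, so $v_h$ remains a valid supersolution for $g_\epsilon$. A secondary delicate point is the passage to the limit $h \to 0^+$ for the lower envelope inequality $u_{*,Q} \leq v_{*,Q}$, which relies on the nondecreasing support of $v_*$ in $t$ to provide a right-continuity in time that is not automatic from lower semicontinuity in $(x, t)$.
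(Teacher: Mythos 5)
Your overall strategy---perturb $v$ to create strict separation, apply the local comparison principle Theorem~\ref{th:comparison}, then pass to the limit---matches the paper's plan, but the central perturbation step contains a genuine error that would sink the argument.

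You claim that $v_h$ remains a viscosity supersolution for the inflated coefficient $g_\epsilon := g_h + \epsilon$ because ``a subbarrier for the $g_\epsilon$-problem is automatically a subbarrier for the $g_h$-problem.'' This implication is reversed. A subbarrier satisfies $\phi_t - g\abs{D\phi}^2 < -\delta$ on $\Gamma(\phi;U)$; since increasing $g$ makes $\phi_t - g\abs{D\phi}^2$ \emph{smaller}, the set of subbarriers \emph{grows} with $g$. In particular, a subbarrier for $g_\epsilon$ with large $\abs{D\phi}$ need not be a subbarrier for $g_h$: from $\phi_t - g_\epsilon\abs{D\phi}^2 < -\delta$ one only gets $\phi_t - g_h\abs{D\phi}^2 < -\delta + \epsilon\abs{D\phi}^2$, and the right side can be arbitrarily large. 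Consequently, being a supersolution for the larger coefficient $g_\epsilon$ is \emph{strictly stronger} than for $g_h$, and $v_h \in \supers(g_h, Q)$ does not give $v_h \in \supers(g_\epsilon, Q)$. Conceptually this is the statement that a free boundary that propagates faster than $g_h$ times the gradient need not propagate faster than $(g_h+\epsilon)$ times the gradient.

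You also correctly diagnose why the pure time shift is insufficient: near $\set{\rho^E_0 = 0}$ both $g$ and $g_h$ equal $1$, while at nearby points with $\rho^E_0 > 0$ one has $g > 1$, so $\sup_{B_{\hat r}} g_1 \leq \inf_{B_{\hat r}} g_2$ in \eqref{conv-order} fails for $g_1 = g$, $g_2 = g_h$. But the fix you propose is not available. The paper circumvents this with a \emph{multiplicative} rescaling in time: $w(x,t) := v(x, \alpha t + \sigma)$ with $\alpha > 1$. Because $w_t = \alpha v_t$ while $Dw = Dv$, the function $w$ is genuinely a supersolution for the dilated coefficient $g_2(x,t) = \alpha\, g(x, \alpha t + \sigma)$---this is an honest recomputation of the free boundary velocity, not an ad hoc enlargement of $g$. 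The factor $\alpha > 1$ then provides the required uniform gap wherever $g$ is bounded, including at $\set{\rho^E_0 = 0}$ where $g \equiv 1$; the shift $\sigma$ handles the set where $g$ becomes infinite. The careful verification of \eqref{conv-order} for this pair is precisely Lemma~\ref{le:g-shift-big}. Replacing your additive inflation with the paper's multiplicative time-dilation would repair the argument; the rest of your setup (localization to $Q_{R,T}$, strict separation at $t = 0$ via Hopf's lemma, limits $\sigma, h \to 0$) is aligned with the paper's proof.
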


We now proceed with the proof of the well-posedness theorem.
Let $u$ and $v$ be two solutions of \eqref{FB} on $Q = \Rn \times (0, \infty)$ with the given initial data.
We want to prove that they must be equal in the sense of \eqref{uniquenesssolutions}.

The basic idea is to perturb one of the solutions to create a strictly ordered pair and then apply the comparison principle.
To apply Theorem~\ref{th:comparison}, for $\alpha > 1$ and $\sigma > 0$ we consider the rescaled shifted function
\begin{align*}
w(x,t) = v(x, \alpha t + \sigma).
\end{align*}
Clearly $w \in \supers(g_2; Q)$, where
\begin{align*}
g_2(x) = \alpha g(x, \alpha t + \sigma).
\end{align*}

We want to show that we can find $\hat r > 0$ such that the assumptions of the comparison principle Theorem~\ref{th:comparison} are satisfied.

\begin{lemma}
\label{le:g-shift-big}
Suppose that $g$ satisfies the assumptions \eqref{g-assumption}, $g$ is nondecreasing in time, and $\set{g = \infty}$ is the epigraph of a function $\tau :\Rn \to \R$ such that $\tau$ is continuous at every point in $\set{\tau < \infty}$.
Then for every compact set $E \subset \Rn\times [0,\infty)$, $\alpha > 1$ and $\sigma > 0$ there exists $r > 0$ such that $\alpha g(x, \alpha t + s) \geq g(y,s)$ whenever $(x,t), (y,s) \in E$ and $\abs{(x,t) - (y,s)} \leq r$.
\end{lemma}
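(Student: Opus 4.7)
The plan is to argue by contradiction via compactness on $E$, letting the strict multiplier $\alpha>1$ handle the generic case and the strict time shift $\sigma>0$ handle the scenarios where $g$ blows up along the way. Suppose the conclusion fails; then for each $k$ one can find $(x_k,t_k),(y_k,s_k)\in E$ with $|(x_k,t_k)-(y_k,s_k)|\to 0$ and
\[
\alpha\,g(x_k,\alpha t_k+\sigma)\;<\;g(y_k,s_k). \tag{$\ast$}
\]
By compactness, extract a subsequence so $(x_k,t_k),(y_k,s_k)\to(z,\tau_0)\in E$, and hence $(x_k,\alpha t_k+\sigma)\to(z,\alpha\tau_0+\sigma)$ with $\tau_0\geq 0$. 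I then split on whether $g(z,\tau_0)$ is finite.

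If $g(z,\tau_0)<\infty$, the continuity part of \eqref{g-assumption} gives $g(y_k,s_k)\to g(z,\tau_0)$. Monotonicity in $t$ and $\alpha\tau_0+\sigma>\tau_0$ yield $g(z,\alpha\tau_0+\sigma)\geq g(z,\tau_0)>0$. When $g(z,\alpha\tau_0+\sigma)$ is also finite, continuity at that point gives $g(x_k,\alpha t_k+\sigma)\to g(z,\alpha\tau_0+\sigma)$, and passing to the limit in $(\ast)$ produces
\[
\alpha\,g(z,\alpha\tau_0+\sigma)\;\leq\;g(z,\tau_0)\;\leq\;g(z,\alpha\tau_0+\sigma),
\]
forcing $\alpha\leq 1$, a contradiction. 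When instead $g(z,\alpha\tau_0+\sigma)=\infty$, the lower-semicontinuity identity $g(\hat x,\hat t)=\liminf g(x,t)$ from \eqref{g-assumption} forces $g(x_k,\alpha t_k+\sigma)\to\infty$, while the right-hand side of $(\ast)$ stays bounded, again a contradiction.

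If $g(z,\tau_0)=\infty$, then $\tau(z)\leq\tau_0<\infty$, so by hypothesis $\tau$ is continuous at $z$ and $\tau(x_k)\to\tau(z)$. Using $\tau_0\geq 0$ and $\sigma>0$,
\[
\alpha\tau_0+\sigma-\tau(z)=(\alpha-1)\tau_0+\sigma+(\tau_0-\tau(z))\;\geq\;\sigma\;>\;0,
\]
and combined with $\alpha t_k+\sigma\to\alpha\tau_0+\sigma$ this gives $\alpha t_k+\sigma>\tau(x_k)$ for all large $k$, so $g(x_k,\alpha t_k+\sigma)=+\infty$, contradicting $(\ast)$. The main obstacle is the borderline inside Case~1 where $g$ blows up at $(z,\alpha\tau_0+\sigma)$ but is finite at $(z,\tau_0)$: there the multiplicative gap $\alpha>1$ alone is insufficient and one has to pull in the lower-semicontinuity clause of \eqref{g-assumption}. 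Everything else is bookkeeping, with the continuity of $\tau$ on $\{\tau<\infty\}$ and the positivity buffer $\sigma>0$ ensuring the shifted time is pushed strictly above $\tau(x_k)$ in Case~2.
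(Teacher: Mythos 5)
Your proof is correct and takes a genuinely different route from the paper's. The paper argues constructively: it first fixes $K>0$ large enough that $\dist(\set{g\geq K},\set{f<\infty}\cap E)=:\delta_1>0$ (where $f(x,t)=\alpha g(x,\alpha t+\sigma)$), establishes $\delta_2:=\dist(\set{f=\infty},\set{f\leq K}\cap E)>0$, finds a modulus of continuity $\omega$ for $g$ on $\set{f\leq K}\cap E$ with $\omega(\rho)\leq(\alpha-1)\min_Q\min\set{f,g}$, and then sets $r=\tfrac12\min\set{\delta_1,\delta_2,\rho}$ and verifies $f\geq g$ directly by a case split on the sizes of $f(x,t)$ and $g(y,s)$ relative to $K$. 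You instead argue by a single compactness contradiction: extract a convergent subsequence of counterexamples, and split on whether $g$ blows up at the limit point $(z,\tau_0)$, feeding in the continuity and liminf clauses of \eqref{g-assumption} in the finite case and the epigraph structure with continuity of $\tau$ in the infinite case. Your argument is shorter and cleaner, though it produces $r$ non-constructively; the paper's version, at the cost of the $K$-threshold bookkeeping, yields an explicit $r$ and makes transparent exactly how the three sources of margin ($\delta_1$, $\delta_2$, uniform continuity on the bounded set) contribute. Both use the same underlying ingredients — compactness of $E$, monotonicity of $g$ in time, the multiplicative gap $\alpha>1$, the additive shift $\sigma>0$, and the structure of $\set{g=\infty}$ via $\tau$ — so the difference is in organization, not substance.
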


\begin{proof}
Let us set $f(x,t) := \alpha g(x, \alpha t + \sigma)$.

\textbf{1.} We first show that we can find $K > 0$ such that
\begin{align*}
\delta_1 := \dist(\set{g \geq K}, \set{f < \infty} \cap E) > 0.
\end{align*}
Indeed, suppose that $\delta_1 = 0$ for any $k \in \N$.
Thus we can find sequences $(x_k, t_k) \in \set{g \geq k}$, $(y_k, s_k) \in \set{f < \infty} \cap E$ such that $\abs{(x_k, t_k) - (y_k, s_k)} < \frac1k$.
By compactness of $E$ we can assume that up to a subsequence $(x_k, t_k) \to (\hat x, \hat t)$ and $(y_k, s_k) \to (\hat x, \hat t)$ for some $(\hat x, \hat t) \in E$.
In particular, $0 \leq \hat t < \infty$.
Since we have $g(\hat x, \hat t) \geq \liminf_{k\to\infty} g(x_k, t_k) = \infty$ by \eqref{g-assumption}, we deduce $\tau(\hat t) \leq \hat t$.
Furthermore, as $\alpha s_k + \sigma < \tau(y_k)$, continuity of $\tau$ yields
\begin{align*}
\alpha \hat t + \sigma \leq \tau(\hat x) \leq \hat t,
\end{align*}
a contradiction.
Therefore we can choose $K > 0$ such that $\delta_1 > 0$.

\textbf{2.} Let $\delta_2 := \dist(\set{f = \infty}, \set{f \leq K} \cap E)$,
and we observe that $\delta_2 > 0$ due to \eqref{g-assumption} and the compactness of $E$.

Since the set $Q :=\set{f \leq K} \cap E \subset \set{g < K}$ is compact, we can find a modulus of continuity $\omega$ of both $g$ on this set,
and $m : = \min_Q \min\set{f, g} > 0$.
Let us find $\rho > 0$ such that $\omega(\rho) \leq (\alpha - 1) m$.
We set $r := \frac12 \min \set{\delta_1, \delta_2, \rho}$.

\textbf{3.} Choose now $(x,t), (y,s) \in E$ with $\abs{(x,t) - (y,s)} \leq r$.
We now prove that $f(x,t) \geq g(y,s)$.

\begin{itemize}
\item If $f(x,t) = \infty$ then the conclusion is trivial.
\item If $K \leq f(x,t) < \infty$ then $g(y,s) < K$ and hence $f(x,t) \geq g(y,s)$.
\item If $K \leq g(y,s)$ then $f(x,t) = \infty$, and again the conclusion is trivial.
\item Finally, if neither of the above is satisfied, we must have $f(x,t) \leq K$ and $g(x,t) \leq K$.
Therefore we can estimate using the monotonicity in time and continuity
\begin{align*}
f(x,t) &= \alpha g(x, \alpha t + \sigma) \geq \alpha g(x,t)\\
&= (\alpha - 1) g(x,t) + g(x,t) \geq (\alpha - 1) m + g(y,s) - \omega(r)\\
&\geq g(y,s).
\end{align*}
\end{itemize}
This finishes the proof.
\end{proof}

\begin{proof}[Proof of Theorem~\ref{th:hs-well-posedness}]
\textbf{Uniqueness.}
Let us first prove uniqueness.
Suppose that $u$ and $v$ are two viscosity solutions satisfying the inital condition.
For simplicity, in the following we write $u$ instead of $u^{*,Q}$,
and $v$ instead of $v_{*,Q}$.

1. If $u$ is a viscosity solution with initial condition $\Omega_0$, a bounded set,
we can compare it with a large radially symmetric superbarrier
\begin{align}
\label{vt-barrier}
W_T = \frac{G(0)}{n} (R^2 e^{16G(0)t /n}  - \abs x^2).
\end{align}
Indeed, since $\rho^E_0 \to 0$ as $\abs x \to \infty$,
we can for any $T > 0$ find $R$ sufficiently large such that
$\rho(x,t) < \frac 12$ for $\abs x \geq R$, $0 \leq t \leq T$
and $\Omega_0 \subset B_R(0)$.
Then $W_T$ is a superbarrier for $0 \leq t \leq T$ since $-\Delta W_T = 2 G(0) > G(0) \geq G(W_T)$
in $\{W_T > 0\}$, while
\begin{align*}
\frac{\partial_t W_T}{|DW_T|^2} = 4 > 2 \geq \frac 1{1- \rho} = g(x,t) \qquad \text{on } \partial \{W_T > 0\}.
\end{align*}
The comparison with this superbarrier yields that $\Omega(u; \Rn \times [0,T]) \subset B_{R
e^{16G(0) T/n}}(0) \times [0, T]$.
Let us therefore define
$Q_T = B_{2(Re^{16 G(0) T/n})}(0) \times [0,T]$.

2. We apply the comparison principle on $Q_T$.
Since $\Omega_0$ has interior ball condition,
by comparison with radial subbarriers we can prove that
$\Omega_0 \subset\subset \Omega_t(v)$ for $t > 0$.
To see this, consider the function
\begin{align*}
w(x,t) = \alpha ((c t + r)^2 - |x - x_0|^2).
\end{align*}
For given $0 < r < 1$, we can first choose $0 < \alpha \ll 1$ such that $G(4\alpha) >
2n\alpha$ and then choose $0 < c \ll 1$ so that $c(c + r)/ (2\alpha r^2) < 1$.
Then for $0 \leq t \leq 1$ we have $w \leq 4 \alpha$ and therefore $G(w) \geq G(4\alpha) > \Delta w$.
Moreover,
\begin{align*}
\frac{w_t}{|Dw|^2} = \frac{2 \alpha c(c t + r)}{4 \alpha^2 |x - x_0|^2} \leq \frac{c(ct + r)}{2
\alpha r^2} < 1 \leq g \qquad \text{on } \partial \{w > 0\}.
\end{align*}
We see that $w$ is a subbarrier for $0 \leq t \leq 1$.
We conclude that if $\Omega_0$ has a interior condition with radius $r > 0$, the free boundary of a solution must
expand initially with velocity at least $c > 0$ given above.

3. Let us fix $\sigma > 0$.
We can find an open set $U \subset \R^n$ with smooth boundary such that
$\Omega_0 \subset\subset U \subset\subset \Omega_\sigma(v)$.
$\Omega_t(u)$ cannot jump outside of $\Omega_0$ by the definition of
a viscosity solution
and therefore $\cl{\Omega_t(u)} \subset U$ for all $t > 0$ sufficiently small.
By the strong maximum principle for the elliptic problem,
we obtain that the solution of the elliptic problem on $U$ with data zero on
$\partial U$ is strictly smaller then the solution of the elliptic problem on $\Omega_\sigma(v)$.
Since $x \mapsto u(x,t)$ is a subsolution of the elliptic problem on $\Rn$ for any $t > 0$,
and $x \mapsto v(x,\sigma)$ is a supersolution of the elliptic problem on $\Omega_\sigma(v)$,
we conclude that $u(\cdot, 0) < v(\cdot, \sigma)$ on $\Omega_\sigma(v)$.

Let us define $w(x,t) = v(x, (1+\sigma)t + \sigma)$ for some $\sigma > 0$.
By the reasoning above, $u \prec_{Q_T} v$ on $\partial_P Q_T$.
Lemma~\ref{le:g-shift-big} implies that the functions $g_1 = g$ and $g_2(x,t) = (1+\sigma) g(x, (1+\sigma)t + \sigma)$
satisfy
the assumptions of Theorem~\ref{th:comparison} on $Q_T$.
Therefore $u \leq w$ on $\Rn \times [0,T]$.

Now we send $\sigma \to 0+$
and recover
\begin{align*}
u_* \leq (u^*)_* \leq v_*.
\end{align*}
By shifting $u$ instead of $v$, that is, considering $u(x, (1 + \sigma)^{-1}(t - \sigma))$
and then sending $\sigma \to0+$,
we also obtain
\begin{align*}
u^* \leq (v_*)^* \leq v^*.
\end{align*}

By repeating the same argument with $u$ and $v$ interchanged, we obtain the uniqueness of solutions:
\begin{align*}
u^* = v^*, \qquad u_* = v_*.
\end{align*}

\textbf{Existence.}
Existence follows from standard Perron-Ishii's method.
We first construct appropriate barriers.

1. Let $Z_\rho$ for $\rho \geq 0$ be the unique solution of the elliptic problem in
$\Omega_0 + B_\rho(0)$ with boundary value zero,
and zero outside of $\Omega_0 +B_\rho$,
where $B_0(0) = \set{0}$.
Since $\Omega_0 \in C^{1,1}$, we see that $\Omega_0 + B_\rho(0) \in C^{1,1}$ for small
$\rho > 0$ and therefore such $Z_\rho$ exists.
Clearly $U(x,t) = Z_0(x)$ is viscosity subsolution of \eqref{hsg}
in $\Rn \times (0,\infty)$.

On the other hand, let us define
\begin{align*}
V(x,t) =
\begin{cases}
Z_{at}(x) & 0 \leq t \leq \eta,\\
W_1(x,t) & \eta < t \leq 1,\\
W_k(x,t) & k -1 < t \leq k, \text{ iteratively $k = 2, 3, \ldots$},
\end{cases}
\end{align*}
where $W_k$ (with $k = T$) was defined in \eqref{vt-barrier}.
Since $\rho^E_0 < 1$ on $\partial \Omega_0$,
$g$ as defined in \eqref{g-wellposedness} is finite in a neighborhood of
$\partial \Omega_0 \times \set{0}$.
Therefore by continuity, we can find $\eta > 0$ sufficiently small and $a > 0$ large enough
so that $V_T$ is a viscosity supersolution.

Note that by continuity of $U$ and $V$ for all $t \geq 0$ small, we have
\begin{align*}
U^*(x, 0) = U_*(x,0) = V^*(x,0) = V_*(x,0) = Z_0(x).
\end{align*}

2. Let now $u$ be the supremum of viscosity subsolutions $w$ with initial data $w^{*,Q}(x,0) = U(x,0)$.
Since $U$ belongs to this class, we see that $u$ is well-defined and $u \geq U$.
Moreover, the comparison principle, with the perturbation above in the proof of uniqueness,
yields
\begin{align*}
U^* \leq u^* \leq V^*, \qquad U_* \leq u \leq V_*.
\end{align*}
In particular, $u$ has the correct initial data.
We only need to show that it is a solution.
We use Definition~\ref{def:visc-barrier}.
Let us show that u is a subsolution.
If not, there exists a parabolic neighborhood and a superbarrier which $u$ crosses,
even though they are strictly ordered on the parabolic boundary.
In this case, we can perturb the barrier at the crossing point (making it smaller)
and deduce that one of the subsolutions must cross the
perturbed barrier, leading to a contradiction.

Similarly, to show that it is a supersolution, we suppose that $u$ crosses a subbarrier.
if this happens, we can perturb the subbarrier, making it larger, and since the
perturbed subbarrier is a viscosity subsolution, this makes $u$ larger,
contradicting the maximality of $u$.
We therefore only need to check that $\set{g = \infty} \cap Q \subset \cl\Omega(u_{*,Q}; Q)$.
But by our assumption on $\rho^E_0$ we have $\set{g = \infty} = \cl{\interior \set{g = \infty}}$.
Suppose that $\cl B_\rho(\xi) \times \set{\tau} \in \interior{g = \infty}$ for some $(\xi, \tau)$ and
$\rho > 0$.
Let $z$ be the solution of the elliptic problem on $B_\rho(\xi)$,
and $0$ outside of $B_\rho(\xi)$.
Then
\begin{align*}
Z(x,t) =
\begin{cases}
0 & t < \tau,\\
z(x) & t \geq \tau,
\end{cases}
\end{align*}
is a viscosity subsolution.
In particular, $u_* > 0$ in $B_\rho(\xi) \times \set{t > \tau}$.
From this we conclude that $\interior\set{g = \infty} \cap Q \subset \Omega(u_{*,Q}; Q)$,
and thus $\set{g=\infty} \cap Q \subset \cl\Omega(u_{*,Q}; Q)$.

We have proved that $u$ is the unique solution of \eqref{hsg} with $g$ of the form
\eqref{g-wellposedness}
and initial support $\Omega_0$.
\end{proof}

\begin{corollary}\label{cor:00}
Suppose $\rho^E_0$ is Lipschitz. Then $\partial\{p>0\}$ has Lebesgue measure zero in $\R^n\times [0,\infty)$.
\end{corollary}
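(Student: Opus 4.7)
The plan is to reduce the statement to showing that $\partial\set{p>0}$ is, essentially, a graph of a continuous function of $x$, whence zero Lebesgue measure follows by Fubini. Define the hitting time
\[
T(x) := \inf \set{t \geq 0 : p_*(x,t) > 0}.
\]
By Definition~\ref{def:visc-test-super} the positive set $\Omega(p_*)$ is relatively open in $\Rn\times[0,\infty)$ and nondecreasing in time, so $\Omega(p_*) = \set{(x,t) : t > T(x)}$ and $T$ is upper semi-continuous. If $\underline T$ denotes the lower semi-continuous envelope of $T$, then
\[
\partial \set{p > 0} \subseteq \set{(x,t) : \underline T(x) \leq t \leq T(x)},
\]
so Fubini yields $|\partial \set{p>0}|_{n+1} \leq \int_{\Rn} (T(x) - \underline T(x))\,dx$, and it is enough to prove that $T$ is continuous almost everywhere on $\Rn$.

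By Definition~\ref{def:visc-test-super}(i-2) one has $\set{\rho^E \geq 1} \subseteq \cl\Omega(p_*)$, giving $T \leq \tau_0$ where $\tau_0(x) := G(0)^{-1} \log(1/\rho^E_0(x))$ (set $\tau_0 := +\infty$ on $\set{\rho^E_0 = 0}$); Lipschitz continuity of $\rho^E_0$ makes $\tau_0$ locally Lipschitz where it is finite. Decompose $\Rn = A \cup B$ with $A := \set{T < \tau_0}$ (open, since $T$ is USC and $\tau_0$ continuous) and $B := \set{T = \tau_0}$. On $A$ the velocity coefficient $g = (1 - \rho^E)^{-1}$ is finite at $(x, T(x))$, and comparison with radial sub- and superbarriers of the type constructed in the proof of Theorem~\ref{th:hs-well-posedness} gives that $T$ is continuous on $A$; on the interior of $B$, $T = \tau_0$ is continuous directly.

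The delicate case is an interface point $x_0 \in \partial A$: USC of $T$ together with $T \leq \tau_0$ immediately yields $\limsup_{y\to x_0} T(y) \leq \tau_0(x_0) = T(x_0)$, but the matching lower bound requires ruling out a sequence $y_k \in A$ with $y_k \to x_0$ and $T(y_k) \to t_\infty < T(x_0)$. For such a sequence one has $\rho^E(y_k, T(y_k)) \to \rho^E_0(x_0) e^{t_\infty G(0)} < 1$, so $g(y_k, T(y_k))$ stays bounded, and radial barrier propagation starting from $(y_k, T(y_k))$ forces the free boundary to reach $x_0$ strictly before time $T(x_0)$, contradicting $T(x_0) = \tau_0(x_0)$. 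The Lipschitz regularity of $\rho^E_0$ provides the quantitative control of $\tau_0$ in a neighborhood of the interface that makes this barrier construction uniform, and supplying this quantitative matching at the $A$/$B$ interface is the main technical hurdle.
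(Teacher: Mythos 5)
Your reduction is sound: since $\Omega(p) = \set{(x,t): t > T(x)}$ for the USC hitting time $T$, one has $\partial\set{p>0} \subset \set{\underline T(x) \leq t \leq T(x)}$, and Fubini reduces the claim to $T = \underline T$ a.e., i.e., a.e.\ continuity of $T$. Moreover the paper's proof can be read through exactly this lens: the ordering \eqref{density}, $\sup_{|y-x|\le kr} p(y,t) \le p(x,(1+r)t+r)$, translates into $T(x) \le (1+r)T(y) + r$ for $|y-x| \le kr$, which together with upper semicontinuity gives \emph{pointwise} continuity of $T$ on any bounded time horizon. So the target is right.

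The proof of a.e.\ continuity, however, has genuine gaps, and you have flagged the main one yourself. First, the claim that ``comparison with radial sub- and superbarriers \dots gives that $T$ is continuous on $A$'' is not justified as written: the subbarrier $w = \alpha((ct+r)^2 - |x-x_0|^2)$ from the proof of Theorem~\ref{th:hs-well-posedness} needs an interior ball of radius $r$ inside the positive set, and the speed $c$ degenerates as $r\to 0$; to push the free boundary from a nearby point $y_k$ to $x_0$ you would need a quantitative nondegeneracy (a lower bound on the gradient or on the interior ball radius) at $(y_k, T(y_k))$ that is not established. Second, the decomposition $\Rn = A \cup \interior B \cup \partial A$ does not exhaust $\Rn$ up to a null set in general: since $A$ is merely open, $\partial A = B \setminus \interior B$ can have positive Lebesgue measure (e.g., if $B$ is a fat Cantor-type set with empty interior), so establishing continuity on $A$ and $\interior B$ is not enough. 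Third, the ``delicate case'' argument at $\partial A$ is precisely where the velocity $g$ tends to infinity, and the heuristic that ``barrier propagation starting from $(y_k, T(y_k))$ forces the free boundary to reach $x_0$'' does not by itself control the time of arrival: the barrier's speed depends on the ball radius, which is uncontrolled, while the target time $\tau_0(x_0) - t_\infty$ is a fixed gap.

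The paper sidesteps all of this with a single global comparison. It shows that $p_1 := \sup_{|y-\cdot|\le kr} p(y,\cdot)$ is a subsolution with velocity coefficient $g_1(x,t) = (1-\min[1,\rho^E + e^{G(0)t}\omega(kr)])^{-1}$ and $p_2 := p(\cdot,(1+r)\cdot + r)$ is a supersolution with coefficient $g_2(x,t) = (1+r)(1-\min[1,e^{G(0)r(1+t)}\rho^E])^{-1}$; Lipschitz continuity of $\rho^E_0$ is used exactly to pick $k$ so that $g_1 \le g_2$ for $0 < r < 1$, whence Theorem~\ref{th:comparisonhsg} gives $p_1 \le p_2$. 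This produces a space-time density estimate at every boundary point uniformly, without a case split and without nondegeneracy of the gradient. To salvage your approach you would essentially need to reproduce this global comparison, at which point the decomposition into $A$ and $B$ becomes unnecessary.
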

\begin{proof}
We will show the following density estimate, which is sufficient to conclude: For any $T>0$, there exists $k=k(T)>0$ such that for any space-time ball $B_{n+1}$ with radius $r(T+2)$ centered at $(x_0,t_0)\in \partial\{p>0\}\cap\{t\leq T\}$, there is a space-time ball $\tilde{B}_{n+1}$ of radius $kr>0$  which lies in both $\{p>0\}$ and in $B_{n+1}$. 

To show this, let us first prove the ordering
\begin{equation}\label{density}
p_1(x,t):=\sup_{|x-y|\leq kr} p(x,t) \leq p_2(x,t):=p(x, (1+r)t+r),
\end{equation}
which would hold for an appropriate choice of $k$ if $\rho^E_0$ is Lipschitz. To see this, first note that the order holds at $t=0$, due to the step 2 in the proof of Theorem ~\ref{th:hs-well-posedness}.

It is straightforward to check that  $p_1$ is a viscosity subsolution of \eqref{FB} with modified normal velocity $ V = |Dp_1|g_1 $ with
$$
g_1(x,t) = \frac{1}{1-\min[1,\rho_E+e^{G(0)t}\omega(kr)]},
$$ where $\omega$ is the continuity mode of $\rho^E_0$, and $p_2$ is a viscosity supersolution of \eqref{FB} with normal velocity $V= |Dp_2|g_2$, where  
$$
g_2(x,t) = \frac{1+r}{1-\min[1,e^{G(0)r(1+t)}\rho^E)]}.
$$
 Now suppose $\rho^E_0$ is Lipschitz so that $\omega(s) \leq Ls$ for some constant $L>0$. Then note that if we choose $k=\frac{1}{2L}e^{-G(0)[T+1]}$, then we have  $g_1\leq g_2$ for $0<r<1$. Therefore the comparison principle for \eqref{FB} yields \eqref{density}. 
 
 Now to check our original claim, suppose $(x_0,t_0)\in \partial\{p>0\}\cap \{t\leq T\}$. Let $p_1$ as given in \eqref{density}, then the spatial ball $\tilde{B}$ of radius $kr$ and center $x_0$ lies in the positive set of $p_1$. Due to \eqref{density}, $\tilde{B}$ also lies in the positive set of $p$ at time $t_1:=(1+r)t_0+r$. Due to the monotone increasing nature of $p$, we then end up with a space-time cylinder $B_{kr}(x_0)\times [t_1+ t_1+kr]$ lying in the positive set of $p$. Since $t_1 \leq t_0 + r(T+1)$, we can conclude that our density estimate holds.

\end{proof}

\section{Convergence in Local Radial setting}
\label{se:convergence radial}

Here we will introduce the notion of radial solutions and give the convergence proof. To make local perturbations of general barriers to make first-order approximations in space and time, we need to consider radial barriers with fixed boundaries.

\begin{definition}
The definition will follow the notion via barriers in Definition~\ref{def:visc-barrier}, which considers $\rho$ outside of the tumor region $\{ p > 0 \}$
as given a priori by $\rho^E(x,t) = \rho^E_0(x) e^{tG(0)}$.
\end{definition}

\begin{definition}[Radial solutions]
\label{de:radial solutions}
$(\phi,\rho_{\phi}^E) $ is a radial, classical solution of \eqref{FB}
in  the cylindrical domain $\{|x-x_0| \leq R\}\times [t_1,t_0]$ or $\{|x-x_0| \geq R\} \times [t_1,t_0]$  if
$$
\rho_{\phi}^E (x,t)= \rho_\phi^E(x,t_1)e^{(t-t_1)G(0)}, \qquad \rho_\phi^E(\cdot, t_1) \in C^2(\R^n)
$$
and

\begin{itemize}
\item[(a)]$\phi(\cdot,t)$  is radial with respect to $x_0$ and is smooth in its positive phase;\\
\item[(b)] $\phi$ solves \eqref{FB} in the classical sense with the free boundary motion law $V=\dfrac{|D\phi|}{1-\rho_{\phi}^E}$;\\
\item[(c)] $\phi(\cdot,t) >0$ in $|x-x_0|=R$ for $t_1\leq t\leq t_0$;\\
\item[(d)] $\rho^E_{\phi}<1$ outside of $\{\phi>0\}$.
\end{itemize}

\end{definition}

\begin{lemma}
\label{le:limit-uniqueness}
The pair $(\chi_{\set{\phi > 0}} + \chi_{\set{\phi=0}} \rho_\phi^E, \phi)$ is the unique pair of functions $(\rho, p)$ in $L^\infty(Q)$,
$\rho \in C([0,\infty]; L^1(\Rn))$, $p \in P_\infty(\rho)$, satisfying
\begin{align}
\label{lim-distr}
\begin{aligned}
\partial_t \rho = \Delta p + \rho G(p) \quad \text{in $\mathcal D'(Q)$},
\quad \rho(0) = \rho_\phi^E(0) \quad \text{in $L^1(\Rn)$},\\
p = \phi \quad \text{on $\partial\Omega$ in the sense of trace in $W^{1,2}(\Rn)$ for a.e. $t > 0$}
\end{aligned}
\end{align}
such that
\begin{enumerate}
\item $\rho, p \in L^\infty((0,T); L^1(\Rn))$;
\item $\rho(t)$ is uniformly compactly supported in $t \in [0,T]$;
\item $\abs{\nabla p} \in L^2(Q_T)$;
\item $\partial_t p \in \mathcal M(Q_T)$, $\partial_t \rho\in \mathcal M(Q_T)$.
\end{enumerate}
Here $P_\infty$ is the Hele-Shaw monotone graph
\begin{align*}
P_\infty(\rho) =
\begin{cases}
\set0 & 0 \leq \rho < 1,\\
[0, \infty) & \rho = 1.
\end{cases}
\end{align*}
\end{lemma}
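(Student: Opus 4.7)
The plan is to establish uniqueness by a Hilbert duality argument in the spirit of Bénilan-Crandall, which is the standard tool for weak uniqueness of nonlinear diffusion equations whose nonlinearity is prescribed by a monotone graph and has been used for closely related incompressible (Hele-Shaw) limits of the porous medium equation. Existence for the particular pair $(\chi_{\set{\phi>0}} + \chi_{\set{\phi=0}}\rho_\phi^E, \phi)$ is immediate from the properties of the classical radial solution in Definition~\ref{de:radial solutions}, so the entire content is uniqueness.

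Suppose $(\rho_i, p_i)$, $i=1,2$, are two admissible pairs satisfying \eqref{lim-distr} with the regularity (1)--(4). Since $p \in P_\infty(\rho)$ is a monotone graph and $0 \leq \rho_i \leq 1$ with $p_i \in L^\infty$, at each point we can write $p_1 - p_2 = a(\rho_1 - \rho_2)$ for some measurable, bounded $a \geq 0$ (set $a=0$ where the denominator vanishes). For an arbitrary $f \in C_c^\infty(Q_T)$ with $f \geq 0$, the idea is to solve the regularized backward dual problem
\begin{equation*}
\partial_t \varphi_\varepsilon + (a+\varepsilon)\Delta \varphi_\varepsilon + b\varphi_\varepsilon = -f, \qquad \varphi_\varepsilon(\cdot, T) = 0,
\end{equation*}
with zero lateral Dirichlet data on $\partial\Omega$, where $b \in L^\infty$ is the coefficient arising from linearizing the source difference using $G \in C^1$. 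Standard linear parabolic theory produces a nonnegative $\varphi_\varepsilon \in C^{2,1}$ with $\norm{\varphi_\varepsilon}_\infty$ and the weighted energy $\iint_{Q_T}(a+\varepsilon)(\Delta \varphi_\varepsilon)^2$ bounded uniformly in $\varepsilon$.

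Testing the difference of the two copies of \eqref{lim-distr} against $\varphi_\varepsilon$ and integrating by parts in $x$ (lateral boundary terms vanish because $p_1 = p_2 = \phi$ on $\partial\Omega$ in the trace sense and $\varphi_\varepsilon = 0$ there), then using $p_1 - p_2 = a(\rho_1 - \rho_2)$ together with Cauchy-Schwarz and $\abs{\nabla p_i} \in L^2(Q_T)$, one arrives at an estimate of the form
\begin{equation*}
\iint_{Q_T} (\rho_1 - \rho_2) f \,dx\,dt \leq C \sqrt{\varepsilon}\,\norm{f}_\infty.
\end{equation*}
Sending $\varepsilon \to 0$ and varying $f \geq 0$ yields $\rho_1 \leq \rho_2$ a.e.; by symmetry $\rho_1 = \rho_2$. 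The graph relation immediately forces $p_1 = p_2$ on $\set{\rho < 1}$, and on each connected component of $\set{\rho = 1}$ both pressures solve the elliptic equation $-\Delta p = G(p)$ with identical Dirichlet data (zero on the interior free boundary and $\phi$ on $\partial\Omega$), so classical elliptic uniqueness yields $p_1 = p_2$ throughout.

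The main obstacle is the $\varepsilon$-uniform weighted estimate on $\Delta \varphi_\varepsilon$, since the coefficient $a$ can vanish on large sets and the dual equation is genuinely degenerate parabolic even before regularization. Condition (2) (uniform compact support of $\rho$) confines the analysis to a bounded cylinder, and the crucial ingredient is the multiplier identity obtained by testing the dual equation against $\Delta \varphi_\varepsilon$ itself; this produces the $\sqrt\varepsilon$ factor that closes the argument and relies on the hypotheses $\abs{\nabla p_i} \in L^2(Q_T)$ and $\partial_t \rho_i \in \mathcal{M}(Q_T)$ exactly as formulated in (3)--(4).
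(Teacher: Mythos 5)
Your uniqueness argument follows essentially the same route as the paper's: Hilbert duality (the paper cites \cite[Section~3]{PQV}), with the novelty being how to handle test functions whose support touches the lateral boundary $\partial\Omega$. The paper notes that one cannot simply approximate $\Delta\psi$ by $\Delta\varphi$, $\varphi\in C^\infty_c$, in $L^1$, so it first establishes the intermediate identity involving $-\nabla(p_1-p_2)\cdot\nabla\psi$ and then integrates by parts using the trace condition $p_1=p_2=\phi$ on $\partial\Omega$. Your sketch elides this but is otherwise on target.

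The genuine gap is the claim that existence of the pair $(\chi_{\set{\phi>0}}+\chi_{\set{\phi=0}}\rho_\phi^E,\phi)$ as a solution is ``immediate from the properties of the classical radial solution.'' It is not. The density $\rho = \chi_{\set{\phi>0}}+\chi_{\set{\phi=0}}\rho_\phi^E$ has a jump across the moving interface $\partial\set{\phi>0}$, so verifying that $\partial_t\rho = \Delta p + \rho G(p)$ holds in $\mathcal D'(Q)$ is a Rankine--Hugoniot computation, not a soft consequence of $\phi$ being a classical solution in its positive set. One has to integrate each term against a test function $\varphi\in C^\infty_c(Q_T)$, collect the bulk integrals over $\set{p>0}$ and $\set{p=0}$, and then check that the resulting surface integrals over $\partial\set{p>0}$ cancel. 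That cancellation holds precisely because of the free boundary velocity law $V = |\nabla\phi|/(1-\rho_\phi^E)$ in Definition~\ref{de:radial solutions}(b): the flux jump $|\nabla p|$ must balance $V(1-\rho^E_\phi)$. This verification is the heart of the lemma --- it is what identifies the classical radial free boundary problem with the distributional formulation the $m$-dependent porous medium solutions converge to --- and omitting it leaves the existence half of the statement unproved. The paper carries this out explicitly by writing the space-time unit normal of $\partial\set{p>0}$ as $\frac{1}{\sqrt{1+V^2}}\left(-\frac{\nabla p}{|\nabla p|},-V\right)$ and checking that the three boundary contributions sum to zero.
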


\begin{proof}
Let us first prove the uniqueness of the solutions of \eqref{lim-distr}.
The statement is analogous to \cite[Theorem~2.4]{PQV},
with the extra boundary condition for $p$.

To apply the Hilbert's duality method that is outlined in \cite[Section~3]{PQV},
we need any two solutions $(\rho_i, p_i)$ to satisfy
\begin{align}
\label{hilbert}
\int_{Q_T} (\rho_1 - \rho_2) \psi_t + (p_1 - p_2) \Delta \psi + (\rho_1 G(p_1) - \rho_2 G(p_2))
\psi \dx \dt = 0
\end{align}
for all $\psi \in C^\infty(\cl{Q_T})$ with boundary data zero on $\partial \Omega$ and
at $t = T$.
For $\psi \in C^\infty_c(Q_T)$ this follows
\eqref{lim-distr}.
Then this can be extended to include $\psi$ nonzero at $t = 0$
as in \cite{Vazquez}.
To extend this to all $\psi$ whose support touches the boundary,
we need to approximate $\Delta \psi$ by $\Delta\varphi$, $\varphi \in C^\infty_c(Q_T)$
in the correct norm (at least $L^1$ since $p_1 -p_2 \in L^\infty$.)
However, this is not possible since $\nabla \psi \neq 0$ on the boundary in general.
We therefore use the fact that $\nabla p \in L^1(0, T; H^1(\Omega))$,
and show first
\begin{align*}
\int_{Q_T} (\rho_1 - \rho_2) \psi_t - \nabla(p_1 - p_2) \cdot \nabla \psi
+ (\rho_1 G(p_1) - \rho_2 G(p_2))
\psi \dx \dt = 0,
\end{align*}
by approximation
and then integrate the second term by parts in space and use that $p_1 = p_2$ on
$\partial \Omega$.
Then we just follow \cite[Section~3]{PQV} since the rest does not see the boundary values.

To finish the proof, we have to show that $(\rho_\phi^E, \phi)$ satisfies \eqref{lim-distr}.
Let us set $p = \phi$ and $\rho = \chi_{\set{\phi > 0}} + \chi_{\set{\phi = 0}} \rho_\phi^E$.
We see that $p \in P^\infty(\rho)$, $(\rho, p)$ has all the regularity required by the assumptions on $(\rho_\phi^E, \phi)$,
and has the correct initial and boundary data.
We therefore only need to show that it satisfies \eqref{lim-distr} in the sense of distributions.
Let $\varphi \in C^\infty_c(Q_T)$ be a test function. We will verify that
\begin{align*}
\int_{Q_T} \rho \varphi_t + p \Delta \varphi + \rho G(p) \varphi \dx \dt = 0.
\end{align*}
Since the boundary $\partial \set{p > 0}$ is assumed to be smooth, its unit outer normal is $\frac 1{\sqrt{1 + V^2}} \pth{-\frac{\nabla p}{\abs{\nabla p}}, -V}$ where $V$ is the normal velocity of $\partial\set{p>0}$ at the given boundary point. Therefore it follows that
\begin{align*}
\int_{Q_T} \rho \varphi_t &= -\int_{\set{p = 0}} \rho_t \varphi - \int_{\partial\set{p > 0}} (1 - \rho) \varphi \frac V{\sqrt{1 + V^2}} \dS,\\
\int_{Q_T} p \Delta \varphi &= \int_{\set{p > 0}} \varphi \Delta p + \int_{\partial\set{p > 0}} \abs{\nabla p} \varphi \frac 1{\sqrt{1 + V^2}} \dS,\\
\int_{Q_T} \rho G(p) \varphi &= \int_{\set{p > 0}} G(p) \varphi + \int_{\set{p = 0}} \rho G(0) \varphi,\\
\end{align*}
We see that the sum of these terms gives zero.
\end{proof}

\medskip

To avoid an initial layer in the limit $m\to\infty$, we need to match the initial data for the
$m$-problems.  For given radial solution $(\phi, \rho^E_\phi)$ we therefore define the initial data
for $\rho_m$ by first finding $\delta > 0$ so that $\rho^E_\phi(x, 0) < 1 - \delta$ on
$\{\phi(\cdot, t) = 0\}$ and then setting
\begin{align}
\label{rhom0}
\rho_{0,m}(x) = \max \bra{P_m^{-1}(\phi(x, 0)),
\min(1 - \delta, \rho^E_\phi(x, 0))}.
\end{align}
Such $\delta$ can be found due to the assumption (d) in Definition~\ref{de:radial solutions} and
the continuity of $\rho^E_\phi$.  Note that with above choice of $\rho_{0,m}$ we have $\rho_{0,m}
\to \chi_{\set{\phi > 0}} + \chi_{\set{\phi = 0}} \rho^E_\phi$ in $L^1$ and $\norm{\nabla
\rho_{0,m}}_{L^1} \leq C$, and $p_{0,m} = P_m(\rho_{0,m}) \to \phi$ uniformly. Moreover $\rho_m$ is
nondecreasing in time when $m$ is large enough according to the following lemma.

\begin{lemma}
\label{le:rho matched increase}
Solution $\rho_m$ of \eqref{pme} with initial data $\rho_{0,m}$ given in \eqref{rhom0} is
nondecreasing in time
for $m$ sufficiently large depending only on $\norm{\Delta \rho^E_0}_\infty$.
\end{lemma}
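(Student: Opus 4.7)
The plan is to reduce monotonicity to a single distributional inequality at time zero, namely that $\rho_{0,m}$ is a stationary weak subsolution of \eqref{pme}, and then invoke the $L^1$-comparison principle for the PME with Lipschitz source. Concretely, once we know
\[
\Delta \rho_{0,m}^m + \rho_{0,m}\, G(P_m(\rho_{0,m})) \geq 0 \qquad \text{in } \mathcal D'(\R^n),
\]
the time-independent function $(x,t) \mapsto \rho_{0,m}(x)$ is a weak subsolution of \eqref{pme}, so $L^1$-comparison forces $\rho_m(\cdot, h) \geq \rho_{0,m}$ almost everywhere for every $h > 0$. Applying comparison once more, now to the two weak solutions $\rho_m(\cdot, \cdot)$ and $\rho_m(\cdot, \cdot + h)$ whose initial data satisfy $\rho_m(\cdot, h) \geq \rho_m(\cdot, 0)$, yields $\rho_m(\cdot, t+h) \geq \rho_m(\cdot, t)$ for all $t, h \geq 0$, which is the claimed monotonicity.

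Writing $\rho_{0,m} = \max(f, g)$ with $f := P_m^{-1}(\phi(\cdot, 0))$ and $g := \min(1-\delta,\, \rho^E_\phi(\cdot, 0))$, I would verify the stationary subsolution inequality pointwise on the two open sets $\{f > g\}$ and $\{g > f\}$. On $\{f > g\}$, from $P_m(f) = \phi$ one derives the identity $\nabla f^m = f \nabla \phi$ (since $f^{m-1} = \tfrac{m-1}{m}\phi$), and combined with $-\Delta \phi = G(\phi)$ on $\{\phi > 0\}$ the expression collapses to
\[
\Delta f^m + f\, G(P_m(f)) = \frac{|\nabla \phi|^2}{m\, f^{m-2}} \geq 0.
\]
On $\{g > f\}$, expand $\Delta g^m = m g^{m-1} \Delta g + m(m-1) g^{m-2}|\nabla g|^2$. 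The gradient term is nonnegative, so on $\{g > 0\}$ the inequality reduces to $m g^{m-2}|\Delta g| \leq G(P_m(g))$; since $g \leq 1-\delta < 1$, the left-hand side is bounded by $m(1-\delta)^{m-2}\|D^2 \rho^E_\phi(\cdot,0)\|_\infty \to 0$ as $m \to \infty$, while the right-hand side tends to $G(0) > 0$ because $P_m(g) \leq P_m(1-\delta) \to 0$. Hence the pointwise inequality holds for all $m$ sufficiently large, depending only on $\|D^2 \rho^E_\phi(\cdot,0)\|_\infty$ and $\delta$, and both sides vanish on $\{g = 0\}$.

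It remains to treat the junction $J := \{f = g\}$ in the sense of distributions. For $m$ large, $J$ is a thin set near $\partial\{\phi(\cdot, 0) > 0\}$: since $P_m^{-1}(s) \to 1$ for every fixed $s > 0$, we have $f > g$ on every compact subset of $\{\phi > 0\}$ once $m$ is large. By the choice of $\delta$ and continuity, $\rho^E_\phi < 1-\delta$ on a neighborhood of $\overline{\{\phi = 0\}}$, so for $m$ large $J$ lies in a region where $g = \rho^E_\phi(\cdot, 0)$ is smooth; $f = P_m^{-1}(\phi)$ is likewise smooth there. Consequently $\rho_{0,m}^m = \max(f^m, g^m)$ is the maximum of two smooth functions in a neighborhood of $J$, and the standard formula for the distributional Laplacian of such a max shows that $\Delta \rho_{0,m}^m$ equals its classical value on the open sides of $J$ plus the nonnegative singular measure $|\nabla(f^m - g^m)|\, \mathcal H^{n-1}\lfloor J$. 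Adding the continuous source $\rho_{0,m}\, G(P_m(\rho_{0,m}))$ preserves the inequality, yielding the stationary subsolution property globally.

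The main obstacle is expected to be the junction analysis: verifying that $g$ is genuinely smooth in a neighborhood of $J$ so that the standard calculus of $\Delta\max$ applies and the singular part is nonnegative, and arguing that $J$ stays away from any kink of $g$ at $\{\rho^E_\phi = 1-\delta\}$. Both rely on the geometric observation above combined with $\delta$ being fixed. All other steps are routine calculus plus the standard $L^1$-comparison principle for the PME with Lipschitz source, for which only minor adaptations of the estimates from \cite{PQV} are required.
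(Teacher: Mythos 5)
Your proof is correct and takes essentially the same approach as the paper: reduce to a subsolution inequality at $t=0$ and verify it separately where each branch of the max dominates. The paper phrases the check in the pressure variable, observing $\partial_t p_m(\cdot,0)=|\nabla\phi|^2\geq 0$ wherever $P_m(\rho_{0,m})=\phi$ and estimating $\Delta p_{0,m}+G(p_{0,m})>0$ on the other region; your identity $\Delta f^m + fG(P_m(f)) = |\nabla\phi|^2/(mf^{m-2})$ is precisely the density-variable form of the same computation. Where you go beyond the paper is in spelling out the two points it leaves implicit: that ``nondecreasing'' is obtained by applying $L^1$-comparison twice (once to see $\rho_m(\cdot,h)\geq\rho_{0,m}$, once to propagate), and that the distributional Laplacian of the max contributes only a nonnegative singular part on $J=\{f=g\}$ once you verify, using the choice of $\delta$ in \eqref{rhom0}, that $J$ stays in the region where $g=\rho^E_\phi(\cdot,0)$ is smooth. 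This extra care is sound and makes the argument cleaner; the paper tacitly relies on the max-of-subsolutions fact without naming it.
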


\begin{proof}
Note that due to the comparison principle, we only need to show that $\rho_m$ is nondecreasing at
the initial time.
By the continuity of $\rho^E_\phi$ we can choose a compact set $K \subset \{ \phi > 0 \}$ such that
$\rho^E_\phi < 1 - \delta$ on $K^c$.
We can find $m_0$ so that $ \phi > P_m(1 - \delta)$ on $K$ for
$m \geq m_0$.
In particular, when $m \geq m_0$, we see that $P_m(\rho_{0,m}(x)) = \phi(x, 0)$ or $\rho_{0,m}(x) =
\rho^E_\phi(x, 0) < 1 - \delta$.

In terms of the pressure variable $p_m = P_m(\rho_m)$, $\rho_m$ satisfies the equation
\begin{align*}
\partial_t p_m = (m-1) p_m (\Delta p_m + G(p_m)) + |\nabla p_m|^2.
\end{align*}
We know that $\phi$ satisfies $\Delta\phi + G(\phi) = 0$ at $t = 0$ and therefore $\partial_t p_m
\geq 0$ at the points where $P_m(\rho_{0,m}) = \phi(x, 0)$.

On the other hand, if $\rho_{0,m} = \rho^E_\phi < 1 - \delta$ then at such point $(x,0)$ we have
\begin{align}
\label{press gradient bound}
\Delta p_{0,m} = m(m-2) \rho_{0,m}^{m-3} |\nabla \rho_{0,m}|^2 + m \rho_{0,m}^{m-2} \Delta
\rho_{0,m} \geq m \rho_{0,m}^{m-2} \Delta \rho_{0,m}.
\end{align}
Since $p_{0,m} \leq P_m(1 - \delta) \ll 1$ we have $G(p_{0,m}) \geq \frac{G(0)}2$
for sufficiently large $m$.
Since $\norm{\Delta \rho^E_\phi}_\infty \leq C$,
we have from \eqref{press gradient bound}
\begin{align*}
\Delta p_{0,m} + G(p_{0,m}) \geq - m (1-\delta)^{m-2} \norm{\Delta \rho_{0,m}}_\infty + G(0)/2 > 0
\end{align*}
for sufficiently large $m$ uniformly in $x$.
We conclude that $\partial_t p_m \geq 0$.
Therefore $\rho_m$ is nondecreasing for sufficiently large $m$.
\end{proof}

\begin{theorem}\label{thm:perthame:ext}
For a given radial solution $\phi$
on $\set{\abs{x - x_0} < R} \times (t_1, t_0)$ or $\set{\abs{x - x_0} > R} \times (t_1, t_0)$,
the corresponding solutions $p_m, \rho_m$ of \eqref{pme}
on the same domain
with initial data
$\rho_m(\cdot, t_1) = \rho_m^0$ at $t = 0$
and boundary data $p_m = \phi$ on $|x|=R$  satisfy the following:
$p_m$ uniformly converges to
$\phi$, and $\rho_m$ uniformly converges to $\rho_\phi^*$ away from the support of $\phi$.
\end{theorem}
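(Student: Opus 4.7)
The plan is to combine uniform integral estimates on $(\rho_m, p_m)$ with the uniqueness result of Lemma~\ref{le:limit-uniqueness} to identify any limit point of the sequence $\set{(\rho_m, p_m)}$ as the pair $(\rho_\phi^*, \phi)$, and then to upgrade the resulting $L^1$-type convergence to uniform convergence by exploiting the time monotonicity provided by Lemma~\ref{le:rho matched increase} together with the smoothness of $\phi$. The overall structure mirrors the compactness argument of \cite{PQV}, with the essential modification being that all integration-by-parts identities now produce boundary terms on $\set{\abs{x - x_0} = R}$ that have to be controlled by the prescribed boundary data $p_m = \phi$.

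The first step is to derive the bounds needed to apply the compactness framework. From the matched initial data \eqref{rhom0} and the comparison principle applied against a spatially constant supersolution of the ODE $\rho' = \rho G(0)$, I get a uniform bound $0 \leq \rho_m \leq C$ and $0 \leq p_m \leq C$ on the cylindrical domain, with uniformly compact support of $\rho_m$ in the exterior case. Multiplying the pressure equation \eqref{pressure} by a suitable test function, then integrating by parts against $\phi$ on $\set{\abs{x - x_0} = R}$, produces a uniform $L^2$ bound on $\nabla p_m$ modulo boundary contributions which are controlled by $\phi \in C^2$; here the fixed boundary simplifies matters because the surface $\set{\abs{x - x_0} = R}$ lies strictly inside the positive phase of $\phi$, so the boundary integrals involve only $\phi$ and $\nabla \phi$, both bounded. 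Time monotonicity of $\rho_m$ from Lemma~\ref{le:rho matched increase} immediately yields $\partial_t \rho_m \in \mathcal{M}$ with a uniform bound, and then the PDE $\partial_t \rho_m = \Delta(\rho_m^m) + \rho_m G(p_m)$ gives a corresponding bound on $\partial_t p_m$ in $\mathcal{M}$ via the relation $\tilde p_m = \rho_m^m = \tfrac{m-1}{m}\rho_m p_m$.

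With these uniform bounds, the Aubin–Lions/Helly-type compactness argument used in \cite[Section~3]{PQV} extracts a subsequence along which $\rho_m \to \rho_\infty$ in $C([t_1, t_0]; L^1)$, $p_m \to p_\infty$ in $L^2((t_1, t_0); H^1)$ weakly, and $\tilde p_m \to p_\infty$ strongly in $L^1$. The limit satisfies the constraint $p_\infty(1 - \rho_\infty) = 0$ a.e., hence $p_\infty \in P_\infty(\rho_\infty)$, and passing to the limit in the weak formulation of \eqref{pme} gives $\partial_t \rho_\infty = \Delta p_\infty + \rho_\infty G(p_\infty)$ in $\mathcal D'$. The boundary condition $p_\infty = \phi$ on $\set{\abs{x - x_0} = R}$ in the trace sense is inherited from the uniform convergence of $p_m$ on $\partial \Omega$ together with the $H^1$-trace theorem applied to the uniform $L^2(H^1)$ bound; continuity in time at $t = t_1$ of the initial data holds by construction of $\rho_{0,m}$. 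Lemma~\ref{le:limit-uniqueness} now identifies $(\rho_\infty, p_\infty) = (\rho_\phi^*, \phi)$ and forces convergence of the full sequence.

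Finally I upgrade the convergence to the statement of the theorem. Since $\phi$ is smooth up to its free boundary in the radial setting, $p_m \to \phi$ in $L^1$ plus the fact that $p_m$ satisfies the viscosity comparison with smooth sub- and super-barriers constructed from $\phi(\cdot, t \pm \sigma)$ upgrades the convergence to locally uniform convergence of $p_m$ on the whole cylinder; equivalently, the monotonicity of $\rho_m$ in $t$ combined with the continuity of the monotone limit $\phi$ allows a radial Dini-type argument. For $\rho_m$, on any compact subset of $\Omega(\phi) = \set{\phi > 0}$ one has $p_m \geq c > 0$ for large $m$, hence $\rho_m \geq (c(m-1)/m)^{1/(m-1)} \to 1$ uniformly; on any compact subset of $\set{\phi = 0}$ the same uniform convergence of $p_m \to 0$ forces $\rho_m^m \to 0$ uniformly, so $\rho_m$ converges to the ODE solution $\rho^E_\phi(\cdot, t_1) e^{(t - t_1) G(0)}$, which matches $\rho_\phi^*$ away from the free boundary. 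The main obstacle in executing this program is the derivation of the uniform $H^1$-in-space estimate for $p_m$ in the presence of the fixed boundary $\set{\abs{x - x_0} = R}$, since the standard PQV estimates must be revisited to keep track of the boundary flux produced by integration by parts and to verify that the limit trace agrees with $\phi$.
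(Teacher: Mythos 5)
The overall skeleton of your argument — uniform estimates, compactness, identification of the limit via Lemma~\ref{le:limit-uniqueness}, then upgrading to uniform convergence using time-monotonicity — is the same as the paper's. But the proposal skips the central technical ingredient on which everything else in the paper's proof hinges, and the substitutes you offer do not work.

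The missing ingredient is the Aronson--B\'enilan semiconvexity estimate $\Delta p_m + G(p_m) \geq -C$, uniform in $m$. In \cite{PQV} this is obtained from a global maximum-principle argument for $w = \Delta p_m + G(p_m)$ (using decay at infinity). On the truncated radial domain this fails without a uniform \emph{boundary} bound on $w$, and the paper produces one by a two-step device: first rescale $\tilde p_m(x,t) := p_m(x, t/(m-1))$, observe that near $\set{\abs x = R}$ the pressure is bounded away from zero so $\tilde p_m$ solves a \emph{uniformly parabolic} quasilinear equation, and apply uniform $C^{1,\alpha}$/$C^{2,\alpha}$ parabolic Schauder estimates up to the lateral boundary to get $\abs{D^2 p_m} + \abs{D p_m} \leq C$ on a slightly inset shell (estimate \eqref{pm-estimate}). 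Only then does the maximum principle for $w$ close, and only then does the paper have a uniform spatial Lipschitz bound on $p_m$ (via integrating the radial version of the AB inequality). You never derive this, and without it the rest collapses. In particular, your claim that the energy integration by parts ``produces a uniform $L^2$ bound on $\nabla p_m$ modulo boundary contributions which are controlled by $\phi \in C^2$'' is wrong: the flux term is $\int_{\partial} p_m\, \partial_\nu p_m$, and knowing $p_m = \phi$ on the boundary does not control $\partial_\nu p_m$ there. It is precisely the rescaled $C^{1,\alpha}$ boundary estimate that bounds that flux.

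The proposed upgrade from $L^1$ to uniform convergence is also not sound as stated. Comparison with $\phi(\cdot, t \pm \sigma)$ would require these shifted functions to be viscosity barriers for the $m$-pressure equation \eqref{pressure} (or for \eqref{pme}); they are barriers only for the limiting free boundary problem \eqref{FB}, so this gives nothing for finite $m$. A Dini-type argument needs pointwise convergence and monotonicity in the index $m$; you have neither a priori. The paper's actual route is: (i) uniform spatial Lipschitz bound from AB, (ii) Arzel\`a--Ascoli to pass from $L^1$ convergence to uniform-in-space convergence at a dense set of times, (iii) time monotonicity of $p_m$ plus continuity of $\phi$ in time to fill in the remaining times by a sandwich argument. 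Step (i) is exactly what your proposal lacks. Similarly, your claim that ``uniform convergence of $p_m \to 0$ forces $\rho_m^m \to 0$ uniformly, so $\rho_m$ converges to the ODE solution'' does not follow: smallness of $\rho_m^m$ does not control $\Delta(\rho_m^m)$ pointwise, so the diffusion term is not negligible and $\rho_m$ need not track the ODE. The paper handles this with an explicit iterative barrier construction on rings, using the already-established $L^1$ convergence to anchor the boundary data and the time-monotonicity of $\rho_m$ to propagate the bound over a short time interval.

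To salvage the proposal you would need to (1) add the time-rescaling and the uniform parabolic $C^{1,\alpha}$/$C^{2,\alpha}$ Schauder estimates near $\set{\abs x = R}$, (2) derive the Aronson--B\'enilan estimate from the boundary bound by the maximum principle (with the bootstrap in time shifts that the paper uses), (3) extract the uniform Lipschitz bound, and (4) replace the Dini/viscosity-comparison heuristics and the ``ODE'' argument for $\rho_m$ with the Arzel\`a--Ascoli plus monotonicity argument and the barrier iteration on rings, respectively.
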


\begin{proof}
We consider the case of exterior domain.
We will for simplicity assume that $x_0 = 0$, $t_1 = 0$ and $t_0 = \infty$.
Let $T > 0$.
Let $\Omega = \set{R < \abs x < r}$ for $r \gg R$ large enough so that it contains the support of solutions $p_m$ for $t \leq T$.
We shall assume that $\phi$ is smooth up to the boundary $\partial \Omega$
and $\phi_t \geq 0$ on $\partial \Omega \times [0, \infty)$.
Then $\phi_t \geq 0$ in $\set{\phi > 0}$.
We set
\begin{align*}
\kappa = \min \set{\phi(x,t): \abs x = R, t \geq 0} > 0.
\end{align*}

Let us consider the solution $\rho_m$, $p_m = P_m(\rho_m)$, of the porous medium equation \eqref{pme} on
\begin{align*}
Q_T = \Omega \times (0,T),
\end{align*}
with initial data $\rho_m^0$ at $t = 0$ and boundary data $p_m = \phi$ for $\abs x = R$.

\textbf{Estimates.}
Since $\phi$ solves $-\Delta \Phi = G(\Phi)$ in $\Omega \cap \set{\psi > 0}$ for every $t$,
we see that $\partial_t p_m(\cdot, 0) \geq 0$.
Indeed, recall the initial data from \eqref{rhom0}.
For $x$ such that $\frac {m-1}m (\rho_m^0(x))^{m-1} = \phi(x)$, we have
\begin{align*}
\partial_t p_m = \abs{\nabla p_m}^2 \geq 0 \qquad \text{at $t = 0$}.
\end{align*}
On the other hand, if $\rho_m^0(x) = \rho_\phi^E(x, 0)$, we conclude that $\partial_t \rho_m \geq 0$ at $t =0$
for sufficiently large $m$ by the regularity $\rho_\phi^E \in C^2$ and the fact that $\rho_m^0 < 1$.
The transition between these two regimes is a convex corner (maximum of two nondecreasing initial data).
Therefore $\partial_t p_m \geq 0$ by the comparison principle.

By putting a subsolution under $p_m$, we can find $R_{1/2} > R$ such that $p_m(\cdot, t) \geq \kappa/2$ on $\Omega_{1/2} = \set{x: R \leq \abs x \leq R_{1/2}}$.

We first derive the uniform $C^{1,\alpha}$ and $C^{2,\alpha}$ estimates for $p_m$ on $\Omega_{1/2}$.
Let us rescale in time. Note that $\tilde{p}_m(x,t):= p_m(x,\frac{t}{m-1})$ solves the equation
$$
\tilde{p}_t = \tilde{p}\Delta \tilde{p} + \frac{1}{m-1}|\nabla \tilde{p}|^2 + \tilde{p}G(\tilde{p}).
$$
Since $\tilde{p}$ is uniformly away from zero in $\Omega_{1/2} \times [0, (m-1) T]$ and uniformly bounded from above,
this is a uniformly parabolic, quasilinear equation in the set considered above. Now uniform $C^{1,\alpha}$ estimate up to the boundary for $\tilde{p}$, where the $C^{1,\alpha}$ norm is
only depending on the boundary data of $\tilde{p}$ as well as the initial data; see Theorem 4.7 and Theorem 5.3 in \cite{Lieberman86}.
We also have uniform $C^{2,\alpha}$ interior estimates up to the initial boundary.
In terms of $p_m$ we lose the estimate in time,
but we still have the estimate in space.
Namely, for sufficiently small $\e > 0$
there exists a constant $C_T > 0$, independent of $m$, such that
\begin{align*}
\norm{p_m(\cdot, t)}_{C^{1,\alpha}(\cl{\Omega_{1/2})}} + \norm{p_m(\cdot, t)}_{C^{2, \alpha}(\set{R + \e/2 \leq \abs x \leq R+ 2\e})} \leq C_T
\qquad \text{for every $0 \leq t \leq T$.}
\end{align*}

This yields the bound
\begin{align}
\label{pm-estimate}
\abs{D^2 p_m} + \abs{D p_m} \leq C,
\quad \text{on }
\set{(x,t):\abs x = R + \e,\ t \geq 0}.
\end{align}

Since the set $\set{x: \abs x = R}$ is smooth, we can easily create barriers $\phi_1$, $\phi_2$ at the boundary that coincide with $\phi$ on the boundary
and $\phi_1 \leq \phi_2$.
Moreover, $\phi_1$ is a subsolution and $\phi_2$ is a supersolution of
\begin{align*}
p_t = (m-1) p \Delta p + \abs{\nabla p}^2 + (m-1) p G(p),
\end{align*}
We conclude that
\begin{align*}
\phi_1 \leq p_m \leq \phi_2 \qquad \text{in a neighborhood of $\set{\abs x = R}$.}
\end{align*}
This will imply that the limit of $p_m$ will have the correct boundary data.

\textbf{Uniqueness}
We shall prove that $p_m$ and $\rho_m$ converge to the unique solution of the problem
in Lemma~\ref{le:limit-uniqueness}.

The main problem with fixed boundary data arises in the semiconvexity estimate for $p_m$,
a variant of the Aronson-Benilan estimate.
Since the proof relies on the maximum principle for $\Delta p_m$,
we need to handle the boundary value of this function.
To accomplish this, we use the estimate \eqref{pm-estimate}.

Indeed, \cite{PQV} derive that $w = \Delta p_m + G(p_m)$ is a solution of
\begin{align}
\label{Delta-eq}
w_t \geq (m-1) p_m \Delta w + 2m \nabla p_m \cdot \nabla w + (m-1) w^2 - (m-1) \pth{G(p_m) - p_m G'(p_m)} w.
\end{align}
All the arguments here can be made rigorous as explained in \cite[Section~9.3]{Vazquez}.
Since \\$\min_{p \in [0, p_M]} \pth{G(p) - p G'(p)} > 0$,
$W(t) = -\frac 1{(m-1) t}$ is a subsolution of
\eqref{Delta-eq}.

Since on $\Gamma = \set{(x,t): \abs x = R+\e,\ t \geq 0}$ we have \eqref{pm-estimate}, we get
\begin{equation}\label{AB}
w = \Delta p_m + G(p_m) \geq \Delta p_m \geq - C \qquad \text{on $\Gamma$}
\end{equation}
for some constant $C > 0$, independent of $m$.
Let $T = \sup\set{t > 0: W(t) \leq - C} = \frac C{m-1}$.
Thus $W(t)$ is a subsolution of \eqref{Delta-eq} with boundary data $w(x,t) \geq W(t)$ on $\Gamma \cap \set{t \leq T}$ and therefore $W(t) \leq w(x,t)$ on $\set{0 \leq t \leq T}$.
By a bootstrap argument with a shift $W(t - \tau)$ for arbitrary $\tau > 0$,
we can deduce that $w(x,t) \geq -C$ on $\set{(x,t): \abs x > \e,\ t \geq T}$.

With \eqref{AB}, we can recover all the uniform local $L^1$-estimates on $\partial_t \rho_m$, $\nabla \rho_m$,
$\partial_t p_m$, $\nabla p_m$ from section 2 of \cite{PQV}, including the $L^1$-continuity of $\rho_m(t)$ at $t = 0$.
A standard argument implies that $\rho_m \to \rho_\phi^*$ and $p_m \to \phi$ in $L^1_{\rm loc} (\R^n\times [0,\infty))$ by the uniqueness result (Lemma~\ref{le:limit-uniqueness}).

\textbf{Lipschitz estimate.}
The functions $p_m$ and $\rho_m$ depend only on $r = \abs x$ and $t$.
In spherical coordinates, \eqref{AB} reads
\begin{align*}
p_{rr} + \frac{n-1}r p_r + G(p) \geq \min\pth{- \frac 1{(m-1)t}, -C}.
\end{align*}
We observe that $p_{rr} + \frac{n-1}r p_r = r^{1-n}\frac \partial{\partial r} (r^{n-1} p_r)$.
Therefore, for given fixed $t$ and all $m$ large so that $\frac 1{(m-1)t} < C$ we have for $C_1 = C + G(0)$
\begin{align*}
r^{1-n}\frac \partial{\partial r} (r^{n-1} p_r) \geq - C_1.
\end{align*}
Integration yields
\begin{align*}
r_2^{n-1} p_r(r_2, t) - r_1^{n-1} p_r(r_1, t) \geq - \frac{C_1}n \pth{r_2^n - r_1^n}, \quad r_1 < r_2.
\end{align*}
To get the lower bound on $p_r(r)$, $r > R+\e$, we use interior parabolic estimates
\eqref{pm-estimate}
which yield
$\abs{p_r(R + \e, t)} \leq C$.
Therefore
\begin{align*}
p_r(r, t) \geq -C \pth{\frac{R+\e}r}^{n-1} - \frac {C_1 r}n \pth{1 - \pth{\frac{R+\e}r}^n}, \quad r > R+\e.
\end{align*}
To get the upper bound, we recall that $0 \leq p \leq p_M$.
By the mean value theorem for any $r > R$ there exists $r_2 \in (r, r+1)$ with $\abs{p_r(r_2, t)} \leq p_M$.
Thus
\begin{align*}
p_r(r,t) \leq \pth{\frac{r_2}r}^{n-1} p_M + \frac{C_1r}n \pth{\pth{\frac{r_2}r}^n - 1} \leq \pth{\frac{r+1}r}^{n-1} p_M + \frac{C_1r}n \pth{\pth{\frac{r+1}r}^n - 1}.
\end{align*}

Therefore $p_m$ is locally uniformly Lipschitz in space for every given time $t > 0$ as long as $m \geq C/t + 1$.

\textbf{Uniform convergence of $p_m$ to $\phi$.}

Let us fix $K \subset \Omega$ compact and $T > 0$.
From above we know that $p_m \to \phi$ in $L^1_{\rm loc}(\R^n)$.
We can find a contable set
$\set{t_i}_{i\in \mathbb N} \subset \set{t \geq 0}$ dense in $\set{t \geq 0}$
and a subsequence of $p_m$, still denoted by $p_m$,
such that $p_m(t_i) \to \phi(t_i)$ in $L^1(K)$ for every $t_i$.
We can choose $t_1 = 0$ since $p_m(\cdot, 0) \to \phi(\cdot, 0)$ uniformly
by the choice of $\rho_m^0$ in \eqref{rhom0}.
Due to the uniform Lipschitz bound, by taking a subsequence if necessary, we can assume that
$p_m(\cdot, t_i) \to \phi(\cdot, t)$ uniformly on $K$ for every $t_i$.
Let us choose $\e > 0$.
$\phi$ is uniformly continuous on $K \times [0,T]$
and so there exists $\delta > 0$ such that $\abs{\phi(x, t) - \phi(x, s)} < \e$ for
any $\abs{t - s} < \delta$, $x \in K$.
Find $N \in \mathbb N$ such that $\bigcup_{i=1}^N (t_i - \delta/4, t_i + \delta/4) \supset [0,T + \delta]$
and $M \in \mathbb N$ such that $\norm{p_m(\cdot, t_i) - \phi(\cdot, t_i)}_\infty < \e$ for
all $i= 1, \ldots, N$, $m \geq M$.
Let now $t \in [0,T]$.
We can find $1 \leq i,j \leq N$ such that $t_i \leq t \leq t_j$, $t_j - t_i < \delta$.
Recall that $t \mapsto p_m(x, t)$ is nondecreasing.
Thus for any $x \in K$ and $m \geq M$ we have
\begin{align*}
p_m(x,t) - \phi(x,t) \leq p_m(x,t_j) - \phi(x, t_j) + \phi(x,t_j) - \phi(x,t)
< 2 \e.
\end{align*}
On the other hand
\begin{align*}
p_m(x,t) - \phi(x,t) \geq p_m(x,t_i) - \phi(x, t_i) + \phi(x,t_i) - \phi(x,t) > -2\e.
\end{align*}
We conclude that the subsequence $p_m \to \phi$ uniformly on $K \times [0,T]$.
Since the limit is unique, the whole sequence must converge.

The uniform convergence of $\rho_m$:
We have $\rho_m = \pth{\frac{m-1}m}^{1/(m-1)} p_m^{1/(m-1)}$.
Let $K$ be a compact subset of $\set{p> 0}$.
But the uniform convergence, there exists $\e > 0$ with $p_m \geq \e$ on $K$ for all
$m$ sufficiently large.
Then for every $\delta > 0$ for all $m$ large we have
\begin{align*}
\rho_m \geq \pth{\frac 12 \e}^{1/(m-1)} > 1 - \delta.
\end{align*}
The upper bound follows from the uniform upper bound on $p_m$.
Therefore $\rho_m \to 1$ locally uniformly in $\set{p > 0}$. It remains to show that $\rho_m$ converges to $\rho^*$ locally uniformly away from $\{p>0\}$.

\medskip

Lastly we would like to prove the uniform convergence of $\rho_m$ to $\rho^E_\phi$ outside of $\{p>0\}$.  Due to the definition of our solution,  for each $t_0>0$ there exists $\delta>0$ such that  $\rho^E_\phi <1-\delta$ for some $\delta>0$ outside of $\{p>0\}$ for $0\leq t\leq t_0$. Based on this fact we will argue by iteration over small time intervals as follows:
\medskip

Let us pick $T>0$ small  and choose $x_0$ outside of $\{p>0\}$ so that $\rho^E_\phi(\cdot,T)<1-\delta$ on a ring $r_1<|x|<r_2$ containing $x_0$.  We can pick $r_1, r_2$ such that $\rho_m(\cdot,T)$ uniformly converges to $\rho^E_\phi(\cdot,T)<1-\delta$ at $|x|=r_1$ and $|x|=r_2$. This is possible due to the $L^1$-convergence of $\rho_m \to \rho^E_\phi$. Since $(\rho_m)_t \geq 0$, we conclude that $\rho_m$ stays strictly below $1-\delta$ on $\{|x|=r_i\}\times [0,T]$.

\medskip

We now construct a barrier for $\rho_m$ in $\{r_1<|x|<r_2\}\times [0,T]$ as follows. At $t=0$ we pick a radial, smooth function $\varphi_0(x)$ which has the same value as $\rho_0$ near $x_0$ and has the value $1-\delta$ on the boundary $|x|=r_i$. Now let $\varphi(x,t) = e^{Ct}\varphi_0(x)$ where $C = G(0) + \frac{1}{m}$ or something like this. Since $\rho_m \leq 1-\delta$ this works fine as a subsolution for the $\rho_m$ equation if $T<O(\delta)$, and it follows that $\rho_m$ converges uniformly to $\rho^E_\phi$ at $x_0$. As a consequence we have the uniform convergence of $\rho_m$ to $\rho^E_\phi$ in every compact subset of $\{p>0\}^c$  for $0\leq T\leq O(\delta)$. We now iterate over time to conclude up to $t=t_0$. Since $t_0$ can be chosen arbitrarily large, we conclude.

\end{proof}

\section{Convergence in the general setting}

Based on the Theorem~\ref{thm:perthame:ext},
next we consider general, i.e., non-radial solutions $\rho_m$ of \eqref{pme} and the corresponding pressure variable $p_m = P_m(\rho_m)$ with initial data $\rho_{0,m}$ given by \eqref{matched} that approximate
the initial data \eqref{initial0}.

As we shall see in the lemma below, our choice of initial data $\rho_{0,m}$ will guarantee that
$\rho_m$ is monotonically increasing in
time.  After we obtain convergence result for this particular approximation of $\rho_0$, we can use
$L^1$ contraction for solutions of \eqref{pme} to address the case of general $\rho_{0,m}$.

\begin{lemma}\label{monotone}
$\rho_m$ increases in time for large enough $m$.
\end{lemma}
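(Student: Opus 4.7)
The plan is to adapt the argument already carried out in the radial case in Lemma~\ref{le:rho matched increase} to the present setting, since the only change is that $\Omega_0$ is a general bounded $C^{1,1}$ domain rather than a ball. By the standard $L^1$-contraction / comparison for \eqref{pme}, comparing $\rho_m(\cdot, t)$ with the time-shifted solution $\rho_m(\cdot, t+h)$ reduces the statement to showing that $\partial_t \rho_m \geq 0$ at $t=0$, or equivalently $\partial_t p_m \geq 0$ at $t=0$ using \eqref{pressure}. We will therefore verify this pointwise in space for $m$ sufficiently large, handling separately the two ``regimes'' of the matched initial datum \eqref{matched} and the corner where they meet.

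In the first regime, namely the set where $P_m^{-1}(p_0) \geq \rho_{0,m}^E$, we have $p_m(\cdot,0) = p_0$ and, since $p_0$ solves $-\Delta p_0 = G(p_0)$ in $\Omega_0$, the equation \eqref{pressure} at $t=0$ reduces to
\begin{align*}
\partial_t p_m\big|_{t=0} = (m-1)p_0 \bigl(\Delta p_0 + G(p_0)\bigr) + |\nabla p_0|^2 = |\nabla p_0|^2 \geq 0.
\end{align*}
In the second regime, where $\rho_{0,m} = \rho_{0,m}^E < 1-\delta$, a direct computation using $p_{0,m} = \tfrac{m}{m-1}(\rho_{0,m}^E)^{m-1}$ gives
\begin{align*}
\Delta p_{0,m} = m(m-2)(\rho_{0,m}^E)^{m-3}|\nabla \rho_{0,m}^E|^2 + m(\rho_{0,m}^E)^{m-2}\Delta \rho_{0,m}^E \geq -m(1-\delta)^{m-2}\|D^2\rho_{0,m}^E\|_\infty.
\end{align*}
Since $p_{0,m} \leq \tfrac{m}{m-1}(1-\delta)^{m-1}\to 0$, we have $G(p_{0,m}) \geq G(0)/2$ for $m$ large, and the assumption $m(1-\delta)^m \|D^2\rho_{0,m}^E\|_\infty \to 0$ from \eqref{initial} then yields $\Delta p_{0,m} + G(p_{0,m}) > 0$ uniformly, so that $\partial_t p_m|_{t=0} \geq 0$ in this region as well.

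The main subtle point, as in the radial lemma, is the behavior of $\partial_t p_m$ along the transition set $\{P_m^{-1}(p_0) = \rho_{0,m}^E\}$, where $\rho_{0,m}$ is only Lipschitz (a maximum of two smooth functions). The key observation is that this is a \emph{convex} corner: $\rho_{0,m}$ is the pointwise maximum of two functions, each of which satisfies $\partial_t \rho_m \geq 0$ at $t=0$ by the analysis above. Standard semiconcavity/comparison arguments for the porous medium equation (as in \cite[Section~9.3]{Vazquez}) then show that the time-monotonicity propagates through such a convex corner: one approximates $\rho_{0,m}$ from below by smooth subsolutions of the PME at $t=0^+$ and passes to the limit, or alternatively compares $\rho_m$ to the solutions starting from each branch of the maximum separately and then takes the maximum, which is still a subsolution. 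Once $\partial_t \rho_m|_{t=0} \geq 0$ holds distributionally, the comparison principle between $\rho_m(\cdot,t)$ and $\rho_m(\cdot, t+h)$ for $h>0$ yields monotonicity for all $t>0$, completing the proof.
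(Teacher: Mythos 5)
Your analysis in the two ``regimes'' of the matched datum \eqref{matched} is correct and mirrors Lemma~\ref{le:rho matched increase}, but the proof of Lemma~\ref{monotone} in the paper takes a genuinely different and cleaner route than the one you propose, and the route you propose has a gap exactly at the point you flag as ``subtle.''

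The paper does \emph{not} try to show $\partial_t\rho_m\geq 0$ at $t=0$ pointwise and then propagate. Instead it exhibits two \emph{explicit, time-nondecreasing subsolutions} of \eqref{pme}: $\tilde\rho_m(x,t):=\rho^E_{0,m}(x)e^{tG(0)/2}$ (shown to be a subsolution for large $m$ using \eqref{initial}, since the $|D\tilde\rho_m|^2$ term is nonnegative and the $m\tilde\rho_m^{m-2}\Delta\tilde\rho_m$ term is dominated by $G/2$) and the stationary $\hat\rho_m(x,t):=P_m^{-1}(p_0(x))$ (a subsolution because $p_0$ solves the elliptic problem). By construction $\max(\tilde\rho_m,\hat\rho_m)(\cdot,0)=\rho_{0,m}$, and since a maximum of subsolutions is a subsolution, comparison gives $\rho_m\geq\max(\tilde\rho_m,\hat\rho_m)\geq\rho_{0,m}$ for all small $t$; the time-shift comparison $\rho_m(\cdot,\cdot+h)$ versus $\rho_m$ then gives full monotonicity. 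This sidesteps the corner entirely.

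In your proposal, the corner handling is where the argument actually breaks down. Approximating $\rho_{0,m}$ from below by smooth data and ``passing to the limit'' does not by itself transfer a pointwise $\partial_t\geq0$ estimate at $t=0$ through a Lipschitz kink; you would need a uniform estimate for the approximating family, which is not supplied. Your second alternative---compare $\rho_m$ with the \emph{solutions} $\sigma_a,\sigma_b$ started from each branch and take their max---is circular: $\max(\sigma_a,\sigma_b)$ is indeed a subsolution with the correct initial data, so $\rho_m\geq\max(\sigma_a,\sigma_b)$, but to conclude $\rho_m(\cdot,t)\geq\rho_{0,m}$ you then need each $\sigma_i$ to be nondecreasing in time, which is exactly the statement being proved (for simpler data, but still unproved). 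Replacing ``solutions'' by the paper's explicit nondecreasing \emph{subsolutions} is the fix; without it the step is not complete. Finally, the reduction you state at the outset is also slightly off: what the time-shift comparison actually needs is $\rho_m(\cdot,h)\geq\rho_m(\cdot,0)$ for all small $h>0$, which is a bit stronger than ``$\partial_t\rho_m\geq 0$ at $t=0$'' in a pointwise or distributional sense; the barrier argument delivers precisely this stronger statement.
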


\begin{proof}
Let us first consider $\tilde\rho_m(x,t) := \rho^E_{0,m}(x) \exp(t G(0)/2)$. Writing $\rho =
\tilde\rho_m$ for the sake of brevity, we can estimate
\begin{align*}
\Delta (\rho^m) + \rho G(p) &= m(m-1) \rho^{m-2} |D\rho|^2 + m \rho^{m-1} \Delta \rho +
\rho G(p)\\
&\geq \rho \left( m \rho^{m-2} \Delta \rho + G(p) \right).
\end{align*}
Due to our assumptions in \eqref{initial}, there exists $m_0$ such that the last term is greater
than $\tilde\rho_m G(0) /2 = \partial_t\tilde\rho_m$ and therefore $\tilde\rho_m$ is a subsolution
of \eqref{pme} for $m \geq m_0$.

Additionally, $\hat \rho_m(x,t) := P_m^{-1}(p_0(x))$ is a stationary subsolution of \eqref{pme}.
We have defined the nondecreasing-in-time functions $\tilde \rho_m$ and $\hat \rho_m$ in such a way that $\max(\tilde \rho_m(\cdot, 0), \hat
\rho_m(\cdot, 0)) = \rho_{0,m}$.
Since a maximum of two subsolutions is also a subsolution, we conclude that
$\rho_m \geq \max(\tilde \rho_m, \hat \rho_m)$, with equality at $t = 0$.
Therefore $\rho_m(\cdot, s) \geq \rho_m(\cdot, 0)$ for any $s \geq 0$.
By the comparison principle we have $\rho_m(\cdot, s) \geq \rho_m(\cdot, t)$ for any $s \geq t$.
\end{proof}

Recall that $p_m = P_m(\rho) := \frac m{m-1} \rho^{m-1}$.
Our goal is to show their convergence to \eqref{FB} as $m\to\infty$. To this end  we first define the \emph{semi-continuous limits} (also referred to as the \emph{half-relaxed limits}) as $m\to\infty$ for a family of functions $f_m$ as
$$
\halfliminf f_m(x,t):= \lim_{r\to 0}  \inf_{\substack{|y|+|s|\leq r\\m\geq r^{-1}}} f_m(x+y, t+s)
$$
and
$$
\halflimsup f_m(x,t):= \lim_{r\to 0}  \sup_{\substack{|y|+|s|\leq r\\m\geq r^{-1}}} f_m(x+y, t+s).
$$

Now let us consider the semi-continuous limits of $\rho_m$ and $p_m$, i.e.,
$$
\rho_1:= \halfliminf \rho_m, \quad p_1 := \halfliminf p_m
$$
and
$$
\rho_2:= \halflimsup \rho_m,\quad \tilde{p}_2:=\halflimsup p_m.
$$
For technical reasons, it is useful to consider a regularization of $\rho_2$ as follows. For a given constant $\sigma>0$ let us define
$$
\rho^\sigma_{m}(x,t):=\sup_{|y-x|\leq \sigma} \rho_m(y,t).
$$
Note that $\rho^{\sigma}$ is a subsolution of \eqref{pme}. Now let us define
$$
\rho^\sigma_2:=  \halflimsup \rho^{\sigma}_m.
$$
Observe that $\rho_1$ is lower semicontinuous and $\rho_2$ and $\rho^{\sigma}_2$ are upper semicontinuous. Let us also define the sets
$$
\Omega_1(t) := \{ p_1(\cdot,t)>0\},\quad \Omega_2(t)= \{\rho_2(\cdot,t)=1\} \quad \hbox{and}\quad \Omega^{\sigma}_2(t) =\{\rho^{\sigma}_2(\cdot,t) =1\},
$$
and define $p^{\sigma}_2(\cdot,t)$ for each $t > 0$ as the smallest supersolution of $-\Delta u = G(u)$ with
Dirichlet boundary data in $\Omega^{\sigma}_2(t)$, that is,
\begin{equation}\label{subsol}
p^{\sigma}_2(x,t) := \inf\{ w(x): w \in C^2(\R^n), -\Delta w > G(w) \hbox{ in a domain containing } \Omega^{\sigma}_2(t), \quad w> 0\},
\end{equation}
and we similarly define $p_2$ corresponding to the set $\Omega_2(t)$.
$p^{\sigma}_2$ is defined in addition to $\tilde{p}_2$ so that we can track the positive set of $p_m$.  $\tilde{p}_2$ is not sufficient for this purpose since we do not know if $p_m$ degenerates to zero
as $m \to \infty$ inside the set $\set{\rho_2 = 1}$. We use the set $\Omega^{\sigma}_2(t)$ instead of $\Omega_2(t)$ to guarantee that the set is regular enough so that the positive set of $p^{\sigma}_2(\cdot,t)$ coincides with the reference set $\Omega^{\sigma}_2(t)$, as we see in the next lemma.  The following lemma shows the relationship
between the various sets, where the last equality is the only nontrivial relation, and explains the
utility of $p^{\sigma}_2$.

\begin{lemma}\label{le:order}
For any $\sigma>0$ we have
$$
\{ p_1 > 0 \} \subset \{ \rho_1 = 1 \}\subset \{\rho^{\sigma}_2=1\} = \overline{\{p^{\sigma}_2>0\}}.
$$
\end{lemma}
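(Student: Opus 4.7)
The plan is to establish the three relations one by one, leveraging the explicit identity $\rho_m = \pth{\tfrac{m-1}{m}p_m}^{1/(m-1)}$ together with the a priori bound $p_m \leq p_M + o(1)$. This bound follows from the maximum principle applied to \eqref{pressure}: at a spatial maximum of $p_m$ one has $\partial_t p_m \leq (m-1) p_m G(p_m)$, which is non-positive once $p_m > p_M$, while our choice of initial data \eqref{matched} satisfies $p_{0,m} \leq p_M + o(1)$ because $p_0 \leq p_M$ by the elliptic maximum principle and $P_m(\rho_{0,m}^E) \to 0$ uniformly.

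For the first inclusion, if $p_1(x,t) > 0$ then the definition of $\halfliminf$ provides $c > 0$ and a parabolic neighborhood on which $p_m \geq c$ for all $m$ large. Being squeezed between $c$ and $p_M + o(1)$, the expression $\pth{\tfrac{m-1}{m}p_m}^{1/(m-1)}$ converges uniformly to $1$ there, so $\rho_1(x,t) = 1$. The second inclusion follows immediately from $\rho_m^\sigma \geq \rho_m$, which gives $\rho_2^\sigma \geq \rho_1$; together with $\rho_m^\sigma \leq 1 + o(1)$, any point with $\rho_1 = 1$ satisfies $\rho_2^\sigma = 1$.

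The equality constitutes the main content. For $\overline{\set{p_2^\sigma > 0}} \subset \set{\rho_2^\sigma = 1}$, upper semicontinuity of $\rho_2^\sigma$ makes the right-hand side closed, so it suffices to check $\set{p_2^\sigma > 0} \subset \set{\rho_2^\sigma = 1}$. If $(x,t)$ lies at positive distance from $\Omega_2^\sigma(t)$, a test function $w$ in \eqref{subsol} equal to $p_M$ on a neighborhood of $\Omega_2^\sigma(t)$ disjoint from $x$, smoothly continued to an arbitrarily small positive constant near $x$, is admissible and witnesses $p_2^\sigma(x,t) = 0$.

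The reverse inclusion $\set{\rho_2^\sigma = 1} \subset \overline{\set{p_2^\sigma > 0}}$ is the main obstacle and is precisely where the sup-convolution earns its keep. Given $\rho_2^\sigma(x_0, t_0) = 1$, I would extract $m_k \to \infty$ and $(x_k, t_k) \to (x_0, t_0)$ with $\rho_{m_k}^\sigma(x_k, t_k) \to 1$, and choose near-optimal $y_k$ with $\abs{y_k - x_k} \leq \sigma$ and $\rho_{m_k}(y_k, t_k) \geq \rho_{m_k}^\sigma(x_k, t_k) - 1/k$. After passing to a subsequence with $y_k \to y_0$ (so $\abs{y_0 - x_0} \leq \sigma$), the inequality $\rho_{m_k}^\sigma(z, t_k) \geq \rho_{m_k}(y_k, t_k)$, valid whenever $\abs{z - y_k} \leq \sigma$, transfers under $\halflimsup$ to yield $\rho_2^\sigma(z, t_0) = 1$ for every $z \in B_\sigma(y_0)$, hence $B_\sigma(y_0) \subset \Omega_2^\sigma(t_0)$. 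On this open ball every admissible $w$ in \eqref{subsol} is a positive classical strict supersolution of $-\Delta w = G(w)$, and weak comparison against the Dirichlet solution $u$ of $-\Delta u = G(u)$ in $B_\sigma(y_0)$ with $u = 0$ on $\partial B_\sigma(y_0)$ (valid because $G$ is decreasing), combined with the strong maximum principle, gives $w \geq u > 0$ inside. Taking the infimum over $w$ yields $p_2^\sigma(\cdot,t_0) \geq u > 0$ on $B_\sigma(y_0)$, and since $x_0 \in \overline{B_\sigma(y_0)}$ we conclude $(x_0, t_0) \in \overline{\set{p_2^\sigma > 0}}$.
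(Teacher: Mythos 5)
Your proof is correct and follows essentially the same line as the paper's: the chain $\rho_1 \leq \rho_2 \leq \rho_2^\sigma$ together with the relation $p_m = P_m(\rho_m)$ gives the first two inclusions, and the interior ball property of $\Omega_2^\sigma(t)$ combined with the definition \eqref{subsol} gives the last equality. The paper's own proof is considerably terser---it simply records the interior ball property and asserts the equality without spelling out the supporting-ball extraction from the half-relaxed limit or the comparison with the Dirichlet solution on the ball that you supply---but the underlying ideas coincide.
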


\begin{proof}
%1. Clearly $\{ \rho_1 = 1 \} \subset \{\rho_2=1\}$ from the definition of
%semicontinuous limits.
Suppose that $\rho_1(x_0, t_0) < 1$ for some $(x_0, t_0)$.
Then there exist $m_k, x_k, t_k$, $m_k \to \infty$ and $(x_k, t_k) \to (x_0, t_0)$ as $k\to\infty$
such that $\rho_{m_k}(x_k, t_k) \to \rho_1(x_0, t_0) < 1$.
But then
$p_1(x_0, t_0) \leq \liminf_{k\to\infty} \frac {m_k}{m_k-1} \rho_{m_k}(x_k, t_k)^{m-1} = 0$.
In particular, $\{ p_1 > 0 \} \subset \{ \rho_1 = 1 \}$. The second inclusion in the lemma is due to the fact that $\rho_1\leq \rho_2 \leq \rho^{\sigma}_2$ for any $\sigma>0$. Lastly, note that due to its definition $\Omega^{\sigma}_2(t)$ is closed and has the interior ball property with balls of radius $\sigma$. It now follows from the definition of $p^{\sigma}_2$ that $\overline{\{p^{\sigma}_2(\cdot,t)>0\} }= \Omega^{\sigma}_2(t)$.

%Next, if $\rho_2(x_0, t_0) < 1$ then for some $r > 0$ we also have $\rho_2(\cdot, t_0) < 1$
%in $B_{2r}(x_0)$.
%But then we can find a domain $U \supset \Omega_2(t_0)$ that does not intersect $B_r(x_0)$.
%By definition of $p_2$, we see that $p_2 = 0$ on $B_r(x_0)$ since there is no
%restriction on $w$ in the infimum besides $w > 0$ in $B_r(x_0)$. Thus we conclude.
\end{proof}

We also point out that due to Lemma~\ref{monotone}, it follows that $\rho_1$ and $\rho_2$ are both
nondecreasing in time.

Let $\omega(\cdot)$ be the mode of continuity for $\rho_0^E$. Below we will show that
\begin{itemize}
\item[(a)] $\tilde{p}_2\leq p^{\sigma}_2$ (Lemma~\ref{le:press-ordered});
\item[(b)] $p_1$ and $p^{\sigma}_2$ are respectively a supersolution of \eqref{FB} with
$\rho^E = \rho_0^E e^{G(0)t}$ and a subsolution of $\rho^{\sigma,E}:= (\rho_0^E + \omega(\sigma))e^{G(0)t}.$ (Theorem~\ref{th:convergence});
\item[(c)] $p_1(\cdot,0) =p_2(\cdot,0)$ is given by  \eqref{1} with $\Omega_0$ (Lemma~\ref{le:initial}).
\end{itemize}
Due to (b) and the stability property of the viscosity solutions of \eqref{FB}, we have $(p_2)_* \leq p_1$. This and (a) yields the convergence results (see Corollary~\ref{cor:main}). We first show that $\Omega^{\sigma}_2(t)$ (and therefore $\Omega_2(t)$, $\Omega_1(t)$) is bounded.

\begin{lem}\label{bounded}
$\Omega^{\sigma}_2(t)$ is bounded for any $t>0$.
\end{lem}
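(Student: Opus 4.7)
The plan is to dominate $\rho_m$ from above by a radial solution whose limit has a bounded positive set, and then transfer the boundedness to $\Omega_2^\sigma(t)$. Since $\Omega_0$ is bounded and $\rho_0^E \to 0$ at infinity, I first choose $R_0$ with $\Omega_0 \subset B_{R_0}$ and a radial continuous function $\hat\rho_0^E$ satisfying $\rho_0^E \leq \hat\rho_0^E < 1$ and $\hat\rho_0^E(x) \to 0$ as $\abs x \to \infty$ (for instance, by radialising $\rho_0^E$ via $\hat\rho_0^E(x) = \sup_{\abs y \geq \abs x} \rho_0^E(y)$ and smoothing). Approximating $\hat\rho_0 := \chi_{B_{R_0}} + \hat\rho_0^E \chi_{B_{R_0}^c}$ as in \eqref{initial}--\eqref{matched}, one obtains radial initial data $\hat\rho_{0,m}$ with $\hat\rho_{0,m} \geq \rho_{0,m}$ for $m$ large. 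The comparison principle for \eqref{pme} then yields $\rho_m \leq \hat\rho_m$ pointwise, where $\hat\rho_m$ is the solution of \eqref{pme} with data $\hat\rho_{0,m}$.

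Next, Theorem~\ref{th:hs-well-posedness} applied to the radial data $(B_{R_0}, \hat\rho_0^E)$ produces a unique radial viscosity solution $\hat p$ of \eqref{FB}. The explicit super-barrier $W_T$ constructed in that proof confines the positive set $\Omega^\wedge(t) := \{\hat p(\cdot, t) > 0\}$ to a ball $B_{R_1(t)}$ for every $t$. Because $\hat\rho_0^E(x) e^{G(0)t} \to 0$ as $\abs x \to \infty$, I can enlarge $R_1(t)$ to some $R(t)$ and choose $\eta > 0$ so that $\hat\rho_0^E(x) e^{G(0)t} \leq 1 - 2\eta$ whenever $\abs x \geq R(t)$.

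The third step transfers this bound to $\hat\rho_m$ for large $m$. For this I would invoke the radial convergence Theorem~\ref{thm:perthame:ext} on a cylindrical exterior region $\{\abs x \geq R_1'\} \times [0,t]$ with $R_1' < R_1(t)$ chosen inside $\Omega^\wedge(t)$, using $\hat p$ on $\{\abs x = R_1'\}$ as the fixed positive boundary data. The theorem then gives $\hat p_m \to \hat p$ uniformly and $\hat\rho_m \to \hat\rho_0^E e^{G(0)t}$ uniformly on compact subsets of the complement of $\Omega^\wedge(t)$; hence $\hat\rho_m(x, t) \leq 1 - \eta$ for $\abs x \geq R(t)$ and all sufficiently large $m$. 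Combining with Step 1, $\rho_m \leq \hat\rho_m \leq 1 - \eta$ there, so $\rho_m^\sigma(x, t) \leq 1 - \eta$ for $\abs x \geq R(t) + \sigma$; taking the half-relaxed $\limsup$ yields $\rho_2^\sigma(x, t) < 1$ outside $B_{R(t) + \sigma}$, i.e., $\Omega_2^\sigma(t) \subset B_{R(t) + \sigma}$.

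The main obstacle is the application of Theorem~\ref{thm:perthame:ext} in this geometry, since it requires nondegenerate (uniformly positive) boundary pressure on the inner cylinder $\{\abs x = R_1'\}$. This can be arranged because $\Omega^\wedge(t)$ contains the nondegenerate ball $B_{R_0}$ at every $t > 0$, so $\hat p$ is strictly positive on some sphere inside $\Omega^\wedge(t)$ by elliptic regularity of the solution of $-\Delta \hat p = G(\hat p)$. Should a direct application prove awkward, one may bypass the convergence theorem entirely by constructing a PME supersolution of \eqref{pme} of the form $\hat\rho_0^E(x) e^{G(0)t}$ plus a correction supported near $\Omega_0$, using the reaction bound $\rho G(p) \leq \rho G(0)$ together with finite propagation of the positive phase of the excess mass $\rho_m - \hat\rho_0^E e^{G(0)t}$.
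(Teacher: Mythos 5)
Your proposal follows essentially the same strategy as the paper: dominate $\rho_m$ from above (via the PME comparison principle) by a radial solution whose limit is a radial viscosity solution of \eqref{FB} with bounded support, then transfer the boundedness to $\Omega_2^\sigma(t)$. The main deviation is your choice of dominating density. The paper does not radialize $\rho_0^E$; it instead dominates $\rho_0$ by $\tilde\rho_0 = \chi_{|x|\leq R} + \tfrac12 e^{-G(0)T}\chi_{|x|>R}$, with $R$ chosen so that $\rho_0^E \leq \tfrac12 e^{-G(0)T}$ outside $B_R$. The point of that small constant is that $\tilde\rho^E(t) = \tfrac12 e^{-G(0)T} e^{G(0)t} \leq \tfrac12$ for all $t\in[0,T]$, so the velocity coefficient $g = (1-\tilde\rho^E)^{-1}\leq 2$ is uniformly bounded on $[0,T]$; the radial \eqref{FB} solution stays classical with a continuously moving free boundary, and finite propagation is immediate. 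This sidesteps precisely the ``awkwardness'' you flag: with your radialization $\hat\rho_0^E$, the quantity $\hat\rho_0^E e^{G(0)t}$ can reach $1$ near $B_{R_0}$ at later times, so you must appeal to the superbarrier $W_T$ from the well-posedness proof and worry about applicability of Theorem~\ref{thm:perthame:ext} in a regime where the free boundary could jump. Your argument can be made to work, but the paper's constant exterior density is cleaner and buys finite propagation for free. (Your fallback suggestion — treating $\hat\rho_0^E(x)e^{G(0)t}$ directly as a PME supersolution plus a correction — is not developed enough to assess; as written $\hat\rho_0^E e^{G(0)t}$ is not obviously a supersolution since $\Delta(\hat\rho_0^E)^m$ can have either sign.)
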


\begin{proof}

By our assumption, $\rho_0$ uniformly converges to zero as $|x|\to\infty$. Therefore  for any $T>0$, there exists $R>0$ such that
$$
\rho^E(x,0) \leq \frac{1}{2}e^{-G(0)T} \hbox{ for } |x|>R.
$$
Let us consider the radial solution $\tilde{\rho}_m$ of \eqref{pme} starting with $\tilde{\rho}_{0,m}$, where $\tilde{\rho}_0$ is given by
$$
\tilde{\rho_0} = \chi_{|x|\leq R} + \frac{1}{2}e^{-G(0)T}\chi_{|x|>R}
$$
and $\tilde{\rho}_{0,m}$ approximates $\tilde{\rho}_0$ as given in Theorem~\ref{thm:perthame:ext}.
Then $\rho_m \leq \tilde{\rho}_m$ by comparison principle for \eqref{pme}. Moreover Theorem~\ref{thm:perthame:ext} yields that
$\tilde\rho_m$ uniformly converges to $\tilde{\rho}$, which solves \eqref{FB} with $\tilde{\rho}^E \leq 1/2$ for $0\leq t\leq T$, and thus has finite propagation property up to $t=T$. Therefore it follows that $\Omega^{\sigma}_2(t)$ is bounded for $0\leq t\leq T$ and we conclude.
\end{proof}

%\begin{lem}\label{easy0}
%Suppose $\sup\rho_0<1$, and let $T= \sup\{t: \sup\rho^*(\cdot,t) <1\}$. Then $\rho_m(x,t)$: solution of (PME) locally uniformly converges to $\rho^*$ in $\R^n\times [0,T)$.

%\end{lem}
%\begin{proof}
%We assume that $\rho_0$ is $C^2$. Then we can construct barriers $\rho^{\pm}(x,t)$ which solve the equation
%$$
%\rho_t = G(0) \pm \e
%$$
%and use the fact that $\Delta (\rho^m)$ goes to zero as $m\to\infty$ as long as $\rho^{\pm}$ stays strictly below $1$.

%\medskip

%If $\rho_0$ is not $C^2$, one can still use approximation arguments (e.g. mollifiers) to prove the result.

%\end{proof}

Next we prove the following lemma, to match $\rho_i$'s with $\rho^E$.

\begin{lem}\label{easy}
Let $\rho_1,\rho_2$ be as defined above. Then the following holds:
\begin{itemize}
\item [(a)] $\rho^{\sigma}_2 \leq 1$ for $t \geq 0$ and $\tilde{p}_2\leq M$ for $t > 0$;\\
\item [(b)] $\rho_1\geq \min[1,\rho^E]$ and  $\{\rho^E\geq 1\}\subset\overline{\{p_1>0\}}$;\\
\item [(c)] $\rho^{\sigma}_2 \leq \rho^{\sigma,E}<1$ outside of  $\{\rho^{\sigma}_2= 1\}$.
\end{itemize}
\end{lem}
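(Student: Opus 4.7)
The plan is to treat the three parts in turn, using the pressure equation \eqref{pressure}, the subsolution from the proof of Lemma~\ref{monotone}, and a matching supersolution of the same exponential form.

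For (a), I would apply the maximum principle to \eqref{pressure}. At a spatial maximum of $p_m$, $\nabla p_m = 0$ and $\Delta p_m \leq 0$, so $\partial_t p_m \leq (m-1) p_m G(p_m)$, which is non-positive once $p_m \geq p_M$. Since $p_{0,m}$ is uniformly bounded by our construction of the initial data, this gives a uniform bound $p_m \leq M := \max(p_M, \sup_m \norm{p_{0,m}}_\infty)$, hence $\tilde p_2 \leq M$. Because $\rho_m = P_m^{-1}(p_m) \leq ((m-1) M/m)^{1/(m-1)} \to 1$ pointwise, and this bound is preserved by the $\sigma$-sup, we obtain $\rho^\sigma_2 = \halflimsup \rho^\sigma_m \leq 1$.

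For (b), I would reuse the subsolution $\tilde \rho_m(x,t) = \rho^E_{0,m}(x) e^{c_m t}$ from Lemma~\ref{monotone}'s proof, with $c_m \nearrow G(0)$; the computation there shows it remains a subsolution as long as $\tilde \rho_m \leq 1 - \delta$. Iterating the comparison $\rho_m \geq \tilde \rho_m$ over such subintervals and taking $\halfliminf$ gives $\rho_1 \geq \rho^E = \rho^E_0 e^{G(0)t}$ wherever $\rho^E < 1$; combined with $\rho_1 \leq 1$ from (a), this yields $\rho_1 \geq \min(\rho^E, 1)$. The containment $\set{\rho^E \geq 1} \subset \overline{\set{p_1 > 0}}$ requires a local radial comparison: at $(x_0, t_0)$ with $\rho^E(x_0, t_0) \geq 1$, fix a small ball $B_r(x_0)$ and invoke Theorem~\ref{thm:perthame:ext} on a radial problem with initial exterior density of value $\rho^E_0(x_0) - \varepsilon$ on $B_r(x_0)$; the radial limit $(\phi, \rho^E_\phi)$ develops a positive-pressure region in $B_r(x_0)$ at some time $t_* \leq t_0$, because its exterior density reaches $1$ before time $t_0$. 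Comparing $\rho_m$ from below with the radial subsolution forces $p_m > 0$ at a nearby point, so $(x_0, t_0) \in \overline{\set{p_1 > 0}}$.

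For (c), I would introduce the dual supersolution $\bar \rho_m(x,t) := (\rho^E_{0,m}(x) + \varepsilon) e^{G(0) t}$. A direct calculation shows $\bar \rho_m$ is a supersolution of \eqref{pme} on the set $\set{\bar \rho_m \leq 1 - \delta'}$ for all $m$ sufficiently large: the reaction obeys $\bar \rho_m G(P_m(\bar \rho_m)) \leq \bar \rho_m G(0) = \partial_t \bar \rho_m$ since $G$ is decreasing and $P_m(\bar\rho_m) \geq 0$, while the diffusion satisfies $|\Delta(\bar \rho_m^m)| \lesssim m \bar \rho_m^{m-1}\norm{D^2 \rho^E_{0,m}}_\infty + m(m-1)\bar \rho_m^{m-2}\norm{\nabla \rho^E_{0,m}}_\infty^2 \to 0$ by the decay \eqref{initial}. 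Outside $\set{\rho^\sigma_2 = 1\}$, upper semicontinuity of $\rho^\sigma_2$ ensures that $\rho_m(y,t) \leq 1 - \delta'$ uniformly on a $\sigma$-neighborhood of $x$ for $m$ large, so the comparison principle applied in this domain gives $\rho_m \leq \bar\rho_m$ there. Taking the $\sigma$-sup, then the $\halflimsup$, then sending $\varepsilon \to 0$ yields $\rho^\sigma_2 \leq \rho^{\sigma, E}$. The strict bound $\rho^{\sigma, E} < 1$ at $(x,t) \notin \set{\rho^\sigma_2 = 1}$ follows by combining (b) with Lemma~\ref{le:order} to get $\set{\rho^E \geq 1} \subset \overline{\set{p_1 > 0}} \subset \set{\rho^\sigma_2 = 1}$, and exploiting that $(x,t)\notin \set{\rho^\sigma_2 = 1}$ forces a $\sigma$-neighborhood on which $\rho^E < 1$; continuity of $\rho^E$ on this compact ball, together with the modulus $\omega(\sigma)$ used to define $\rho^{\sigma, E}$, absorbs the enlargement and yields the strict inequality.

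The main obstacle is the inclusion $\set{\rho^E \geq 1} \subset \overline{\set{p_1 > 0}}$ in (b): one cannot deduce $p_1 > 0$ directly from $\rho_1 = 1$ because $p_m = P_m(\rho_m)$ could in principle degenerate to zero even when $\rho_m \to 1$, so we must instead \emph{produce} a positive-pressure region through the radial convergence result Theorem~\ref{thm:perthame:ext}. A secondary subtlety is the strict inequality in (c), which must be obtained by a careful bookkeeping that matches the $\sigma$-enlargement of the tumor region with the modulus of continuity of $\rho_0^E$.
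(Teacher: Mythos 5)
Your part (a) is sound and close to the paper's argument. The real trouble is concentrated in the second half of part (b), with secondary issues in parts (b) first half and (c).

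\textbf{Part (b), second inclusion — a genuine gap.} Your plan is to invoke Theorem~\ref{thm:perthame:ext} on a radial problem whose exterior density reaches $1$, and to claim that the radial limit ``develops a positive-pressure region.'' But Theorem~\ref{thm:perthame:ext} takes as \emph{input} a radial classical solution $(\phi,\rho^E_\phi)$ of \eqref{FB} in the sense of Definition~\ref{de:radial solutions}, and condition (d) there demands $\rho^E_\phi < 1$ outside $\{\phi>0\}$ for the \emph{whole} time slab. This is precisely the hypothesis that fails in the regime you need, so the radial convergence theorem is inapplicable at and past nucleation; it was never designed to describe nucleation, only propagation. You cannot use it to \emph{produce} the positive-pressure region, and asserting that the radial limit develops one because $\rho^E_\phi$ hits $1$ is assuming what you need to prove. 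The paper instead constructs an explicit subsolution barrier for the $m$-problem: starting from $\rho_m \ge 1-\delta$ on $\cl B_r(x_0)\times[t_0,t_1]$ (which is obtained from the first half of (b)), they set $\phi(x,t)=a(t)\varphi(x)$ with
\[
\varphi(x)=\Bigl(\tfrac{\delta}{2n}(r^2-|x-x_0|^2)+(1-\delta)^m\Bigr)^{1/m},
\qquad a(t)=e^{(G(0)-3\delta)(t-t_1)},
\]
verify directly that $\phi$ is a subsolution of \eqref{pme}, and conclude $p_m \ge \phi^m \ge \frac{\delta r^2}{8n}$ on $B_{r/2}(x_0)\times[t_0,t_1]$, with a lower bound independent of $m$. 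The essential idea is to design $\varphi$ so that $\varphi^m$ stays bounded away from $0$ even though $\varphi<1$; this is what forces $p_m$ to be uniformly positive. Your radial-comparison plan has no mechanism that does this.

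\textbf{Part (b), first inequality.} Using the global subsolution $\tilde\rho_m = \rho^E_{0,m}e^{c_mt}$ from Lemma~\ref{monotone} is delicate: its subsolution property hinges on $m\tilde\rho_m^{m-2}\Delta\tilde\rho_m\to 0$, which is controlled by \eqref{initial} only as long as $\tilde\rho_m\le 1-\delta$. Once $\tilde\rho_m$ crosses this threshold somewhere in space, the function stops being a subsolution on the whole space and the global comparison $\rho_m\ge\tilde\rho_m$ no longer follows; iterating over subintervals does not fix this because the comparison principle needs a globally (or suitably localized) valid barrier. The paper avoids the issue by localizing: they use $\phi(x,t)=[a(t)\varphi(x)-\omega(r)t]_+$ with $\varphi$ a smoothed $\chi_{B_r(x_0)}$ and $a(t)$ capped at $1$, so that $\phi$ is automatically $<1$ and compactly supported, and they obtain the pointwise lower bound $\rho_1\ge\min[1,\rho^E_0(x_0)e^{G(0)t}]-\omega(r)t_0$ before sending $r\to 0$.

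\textbf{Part (c).} Your supersolution $\bar\rho_m=(\rho^E_{0,m}+\varepsilon)e^{G(0)t}$ will generally not dominate $\rho_m$ on the lateral boundary of the domain where you apply comparison: you only know $\rho_m\le 1-\delta'$ there, and $\bar\rho_m$ can be far smaller than $1-\delta'$ on that boundary, so the boundary ordering required by the comparison principle can fail. The paper sidesteps this by constructing $\phi=e^{(G(0)+\varepsilon)(t-T)}\varphi$ with $\varphi\equiv 1-\delta$ on the spatial boundary of the small ball, so $\rho^\sigma_m<1-\delta\le\varphi$ automatically on the lateral boundary; the barrier is glued down to $\approx\rho^E(\cdot,T)$ only near the center where the estimate is needed. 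This detail is essential and not recoverable by taking the $\sigma$-sup afterwards.

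In short, the maximum-principle part (a) is fine, but parts (b) and (c) need the paper's spatially localized, boundary-matched barrier constructions rather than global-in-space comparison functions, and the nucleation in (b) must be forced by a direct subsolution for \eqref{pme} whose $m$-th power stays bounded below — something the radial convergence theorem cannot deliver.
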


\medskip

\begin{proof}

1. To show (a), we write $p_m = P_m(\rho_m)$ and $p_{0,m} = P_m(\rho_{0,m})$ and we set $M :=
\sup_{m, x} p_{0,m}(x)$, which is finite by assumption.  By comparison principle, $p_m \leq M$ for
any $m$.  Set
$$
c = - \max_{0 \leq s \leq M} G'(s) > 0.
$$  The function $\phi_m \equiv p_M + (M -
p_M)_+ e^{-c(m-1) t/M}$ is a supersolution of \eqref{pressure} and therefore the comparison
principle yields $p_m \leq \phi_m$ for all $m$.  $\tilde p_2 \leq M$ for $t > 0$ follows.  This
then also implies $\rho_m \leq P_m^{-1}(\phi_m) \leq \max (M, p_M)^{1/{m-1}} \to 1$ for all $t \geq
0$.

\medskip

2.  To show (b), let us choose $(x_0,t_0) \in \R^n\times (0,\infty)$ and $r>0$. To prove the first part,  we will show that
\begin{equation}\label{order}
\rho_1(x,t) \geq \min[1, \rho^E_0(x_0)e^{G(0)t}]-\omega(r)t_0 \hbox{ in } B_r(x_0)\times [0,t_0],
\end{equation}
where $\omega(r)$ is the continuity mode of $\rho_0$ at $x_0$. Since $r>0$ can be arbitrarily chosen and $\omega(r)\to 0$ as $r\to 0+$, we can then conclude. To show \eqref{order} we consider the function
$$
\phi(x,t) = [a(t)\varphi(x) - \omega(r)t]_+ \hbox{ for } 0\leq t\leq t_0,
$$
where $a(t)$ is an increasing $C^1$ function satisfying $a(t)=e^{G(0)t}(\rho_0(x_0)-\omega(r))$ until it hits and $a(t)\equiv 1$ thereafter. $\varphi = \varphi_m$ is a smoothed version of $\chi_{B_r(x_0)}$  that satisfies $-\Delta \varphi^m \leq \e$ for $\e\ll \omega(r)$.  One can for instance use
$$
\varphi(x) := \pth{\frac{\e}{2n} \pth{r^2-\abs{x-x_0}^2}_+}^{1/m}.
$$
We claim that if $m$ is sufficiently large, then  $\phi$ satisfies in its positive set
$$
\begin{aligned}
\phi_t - \Delta \phi^m &\leq \phi_t - \Delta \varphi^m(x)\\
&\leq G(0)\phi-\omega(r) + \e \\
&\leq G(0)\phi - \omega(r)/2 \leq G(p_{\phi})\phi,
\end{aligned}
$$
where $p_{\phi}:=\frac{m}{m-1}\phi^{m-1}$. Note that the first inequality holds since $-\Delta \varphi^m \geq 0$ and $a(t) \leq 1$, and the last inequality holds since $\phi \leq 1-\omega(r)$.

\medskip

Thus $\phi$ is a subsolution of \eqref{pme}, and it follows from the comparison principle of \eqref{pme} that $\phi \leq \rho_m$ and thus $\phi \leq \rho_1$, yielding \eqref{order}.

\medskip

3. Now let us prove the second part of (b)  by modifying the subsolution barrier in the above step. Suppose $\rho^E(\cdot,t_0)\geq 1$ in $B_r(x_0)$ for some $(x_0, t_0)$ and $0 < r < |2G'(0)|^{-1/2}$.
Since $\rho^E$ is non-decreasing in time,
we have $\rho^E \geq 1$ on $\cl B_r(x_0) \times [t_0, \infty)$.
Then from the first part we have $\rho_1 \geq 1$ in $B_r(x_0) \times [t_0, \infty)$,
and thus for any $\delta>0$ and for sufficiently large $m(\delta)$ we have
$$
\rho_m \geq 1-\delta\text{ in }\cl B_r(x_0) \times [t_0, t_1] \hbox{ for } m>m(\delta),
$$
where $t_1 := t_0 + 2 G(0)^{-1} \delta$.

\medskip

Now let us construct the barrier $\phi(x,t) = a(t)\varphi(x)$ to compare with $\rho_m$ in $B_r(x_0)\times [t_0,t_1]$, where $a(t)=e^{(G(0)-3\delta)(t-t_1)}$ and
$$
\varphi(x) = \bra{\frac{\delta}{2n}\pth{r^2 - (x-x_0)^2} +(1-\delta)^m }^{1/m}
$$
so that we have $-\Delta ( \varphi^m ) \leq \delta$ and  $\varphi \geq(1-\delta)$ in $B_r(x_0)$ with equality on $\partial B_r(x_0)$. Also at initial time $t=t_0$, $a(t_0) = e^{(G(0)-3\delta)(t_0-t_1)} < 1-\delta$ since $t_0-t_1 = -2 G(0)^{-1}\delta$. Hence  we have $\varphi \leq 1-\delta \leq \rho_m$ at $t=t_0$ and $\varphi \leq 1-\delta $ on $\partial B_r(x_0)\times [t_0,t_1]$. Also $\phi \geq 1-3\delta \geq \frac 12$ in $B_r(x_0)\times [t_0,t_1]$.

Then we can estimate
$$
\begin{aligned}
\phi_t - \Delta (\phi^m) &\leq \phi_t - \Delta (\varphi^m)\\
&\leq [G(0)-3\delta]\phi + \delta \\
&\leq [G(0)-\delta] \phi \leq G(p_{\phi})\phi,
\end{aligned}
$$
where the first inequality holds due to the fact that $a(t)\leq 1$ and $-\Delta (\varphi^m) \geq 0$, and the last inequality holds for $\delta$ sufficiently small due to the fact that $\phi\geq \frac 12$ and $p_{\phi} G'(0) \geq \delta G'(0) r^2/ n > - \delta / 2$ for large $m$.
Hence we conclude that $\phi\leq \rho_m$ in $B_r(x_0)\times [t_0,t_1]$, which yields
$$
\frac{\delta r^2}{8n}\leq \phi^m \leq p_m \hbox{ in } B_{r/2}(x_0)\times [t_0,t_1]
$$
for $m>m(\delta)$. Thus we conclude that
\begin{equation}\label{conclusion}
p_1(x_0,t_1)=p_1(x_0,t_0+ G(0)^{-1}\delta)>0
\end{equation}
since $r$ is independent of $m$.
As \eqref{conclusion} holds for arbitrarily small $\delta$, it follows that $(x_0,t_0)\in \overline{\{p_1>0\}}$ and we can conclude.

\medskip

4.  Lastly to show (c), we will show that for any given $\delta>0$
\begin{equation}\label{claim}
\rho^{\sigma}_2 \leq \rho^{\sigma,E}\hbox{ on } \{\rho^{\sigma}_2 < 1-2\delta\}.
\end{equation}
We will show this iteratively over time intervals of fixed size $\gamma>0$,
where $\sigma$ satisfies
\begin{equation}\label{delta0}
e^{(G(0)+1)\gamma} (1-\delta) = 1-\delta/2.
\end{equation}

Note that \eqref{claim} holds for $t=0$. Suppose that \eqref{claim} holds up to $t=T$, and let us choose $(x_0,t_0)$ in $\{\rho^{\sigma}_2 < 1-2\delta\} \cap\{T\leq t\leq T+\gamma\}$.
Due to the upper-semicontinuity of $\rho_2$ and its monotonicity in time, there exists $r>0$ such
that $\rho^{\sigma}_2 <1-\delta$ in $\cl B_{2r}(x_0)\times [T,t_0]$. Also note that, due to the first part of
(b)
we have $\min[\rho^E(\cdot, T),1] \leq \rho_1(\cdot, T) \leq \rho^{\sigma}_2(\cdot, T) < 1 - \delta < 1$
on $\cl B_{2r}(x_0)$ and hence
$\rho^{\sigma,E} (\cdot,t_0) =
e^{G(0)(t_0-T)}\rho^E(\cdot, T)\leq e^{G(0) \gamma} (1- \delta) <1 - \delta/2 <1$ in $B_{2r}(x_0)$.

Now based on these facts we will construct a supersolution barrier $\phi$  for \eqref{pme} in $\Sigma:=B_{2r}(x_0)\times [T, t_0]$ such that $\phi \leq \rho^E$ in $ B_r(x_0)\times [T,t_0)$, concluding \eqref{claim}.

Let us choose $\e>0$ and let $\varphi$ be a smooth function in $\cl B_{2r}(x_0)$, $\varphi \leq 1-\delta$ such that $\varphi = 1-\delta$ on $\partial B_{2r}(x_0)$, $\rho^E(\cdot,T)\leq \varphi\leq \rho^E(\cdot,T)+ \e$ in $B_r(x_0)$.  Now consider the barrier
 $$
 \phi(x,t):=e^{(G(0)+\e)t}\varphi(x) \hbox{ in } \Sigma.
 $$
Note that from \eqref{delta0} we have
$\phi \leq  1-\delta/2$ in $\Sigma$, and thus and thus $\phi^m \leq \frac{1}{m^2}$ for large $m$.
Due to this fact and that $\phi$ is smooth, it follows that  $\phi$ is a supersolution of \eqref{pme} in $\Sigma$ for sufficiently large $m$.  Since $\rho^{\sigma}_2 < 1-\delta$ in $B_{2r}(x_0)\times [T,t_0]$, so is $\rho^{\sigma}_m$ for sufficiently large $m$, and thus $\rho^{\sigma}_m \leq \phi$ on the parabolic boundary of $\Sigma$. Thus the comparison principle for \eqref{pme} yields that $\rho^{\sigma}_m\leq \phi$ in $\Sigma$. By sending $\e\to 0$ we conclude that $\rho_2 \leq\rho^{\sigma,E}$ at $(x_0,t_0)$, proving \eqref{claim} for the time interval $[T, T+\gamma]$. Now we conclude \eqref{claim} by iterating our argument over time intervals of length $\gamma$. Lastly we conclude (c) by sending $\delta\to 0$ in \eqref{claim}.

\end{proof}

Next let us prove that $p_2$ is bigger than the limit supremum of $p_m$.

\begin{lem}
\label{le:press-ordered}
$\tilde{p}_2 \leq p_2$.
\end{lem}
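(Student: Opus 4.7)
The plan is to show that at each fixed $t_0 > 0$, the spatial slice $u := \tilde p_2(\cdot, t_0)$ vanishes outside $\Omega_2(t_0)$ and is a viscosity subsolution of $-\Delta u = G(u)$ on $\R^n$. Then a comparison with any admissible test function $w$ appearing in the analog of \eqref{subsol} for the set $\Omega_2(t_0)$ gives $u \le w$, and taking the infimum delivers $u \le p_2(\cdot, t_0)$. The case $t_0 = 0$ is covered by the matching of initial data.

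For the localization, if $\rho_2(x_0, t_0) < 1$, upper semicontinuity of $\rho_2$ together with $\rho_2 \ge \halflimsup \rho_m$ gives $\alpha > 0$ and a space-time neighborhood of $(x_0,t_0)$ on which $\rho_m \le 1 - \alpha$ for all large $m$. The identity $p_m = \tfrac{m}{m-1}\rho_m^{m-1}$ then forces $p_m \to 0$ on that neighborhood, so $\tilde p_2(x_0, t_0) = 0$. Hence $\{\tilde p_2 > 0\} \subset \Omega_2(t_0)$, which is bounded by Lemma~\ref{bounded}.

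The subsolution property follows from the Aronson-Benilan-type estimate used in the proof of Theorem~\ref{thm:perthame:ext} (cf.~\cite{PQV}),
\[
\Delta p_m + G(p_m) \ge -\tfrac{1}{(m-1)t} \qquad \text{for } t > 0,
\]
so that $x \mapsto p_m(x, t)$ is a classical subsolution of $-\Delta v - G(v) \le \tfrac{1}{(m-1)t}$ on $\R^n$. The standard viscosity-stability argument under the upper half-relaxed limit, combined with the time-monotonicity of $p_m$ from Lemma~\ref{monotone} (which renders $\tilde p_2$ right-continuous in $t$ and lets one promote any strict $x$-maximum of $u - \phi$ at $(x_0, t_0)$ to a strict space-time maximum of $\tilde p_2 - \Phi$ after perturbing by $\Phi(x, t) := \phi(x) + \delta(t - t_0) + C(t - t_0)^2$ for suitable $\delta, C > 0$), yields that $u$ is a viscosity subsolution of $-\Delta u = G(u)$ on $\R^n$.

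Finally, the comparison. Fix $w \in C^2(\R^n)$ with $w > 0$ and $-\Delta w > G(w)$ on some open $U \supset \Omega_2(t_0)$, and set $v := w - u$. This LSC function is strictly positive outside the bounded set $\{u > 0\} \subset \Omega_2(t_0)$, so its global minimum is attained. If strictly negative at some $x_0$, then $u(x_0) > w(x_0) > 0$ places $x_0$ in $\{u > 0\} \subset U$, and $\phi := w + |v(x_0)|$ smoothly touches $u$ from above at $x_0$ with $\phi(x_0) = u(x_0)$. The subsolution property gives $-\Delta w(x_0) = -\Delta \phi(x_0) \le G(u(x_0))$, while $G' < 0$ and $w(x_0) < u(x_0)$ yield $G(w(x_0)) > G(u(x_0))$, contradicting the strict supersolution $-\Delta w(x_0) > G(w(x_0))$. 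Hence $u \le w$, and infimizing over $w$ completes the proof. The principal technical obstacle is the half-relaxed passage to the limit in the Aronson-Benilan estimate at fixed $t_0 > 0$; everything else is routine once the subsolution property is in hand.
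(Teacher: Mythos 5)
Your route is genuinely different from the paper's. The paper stays at the parabolic level: it takes a smooth strict supersolution $w$ as in the definition of $p_2$, builds the explicit space--time barrier $\phi(x,t)=M\,\frac{t-t_0}{t_1-t_0}+w(x)$ on a slab $U\times[t_1,t_0]$ (using $\tilde p_2\le M$ from Lemma~\ref{easy}(a) at $t=t_1$ and the vanishing of $\tilde p_2$ on $\partial U\times[t_1,t_0]$, since $\rho_2<1$ there), checks that $\phi$ is a supersolution of \eqref{pressure} for $m$ large, and applies the parabolic comparison principle to get $p_m\le\phi$, hence $\tilde p_2(\cdot,t_0)\le w$. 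You instead prove a structural elliptic fact --- that $u:=\tilde p_2(\cdot,t_0)$ vanishes outside $\Omega_2(t_0)$ and is a viscosity subsolution of $-\Delta u=G(u)$ on $\Rn$ --- and close by an elementary elliptic comparison with $w$. This isolates a reusable property of $\tilde p_2$, but requires the Aronson--B\'enilan bound $\Delta p_m+G(p_m)\ge -\frac1{(m-1)t}$ for the general (non-radial, whole-space) $p_m$; in this paper that estimate is only derived in the fixed-boundary setting of Theorem~\ref{thm:perthame:ext}, so it should be cited from \cite{PQV} directly for the whole-space $p_m$. The paper's barrier proof is shorter and more self-contained; yours makes the elliptic structure of $\tilde p_2$ explicit.

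There is a sign slip in the perturbation step. With $\Phi(x,t)=\phi(x)+\delta(t-t_0)+C(t-t_0)^2$, the added contribution $-\delta(t-t_0)-C(t-t_0)^2\approx\delta(t_0-t)>0$ for $t$ slightly below $t_0$ actually \emph{raises} $\tilde p_2-\Phi$, so the maximum is not pinned at $(x_0,t_0)$; and for $t>t_0$ the monotone $\tilde p_2$ can jump upward faster than the linear penalty compensates, so a full two-sided neighborhood does not work either. The correct device is to restrict to $\{t\le t_0\}$ and take $\Phi(x,t)=\phi(x)-\delta(t-t_0)$: the monotonicity $\tilde p_2(x,t)\le\tilde p_2(x,t_0)$ together with the nonpositive perturbation $\delta(t-t_0)\le 0$ then forces a strict maximum of $\tilde p_2-\Phi$ at $(x_0,t_0)$ over $\overline{B_r(x_0)}\times[t_0-\eta,t_0]$. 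The standard half-relaxed limit argument yields $(x_k,t_k)\to(x_0,t_0)$ with $t_k\le t_0$ at which $p_{m_k}(\cdot,t_k)-\phi$ has a local spatial maximum and $p_{m_k}(x_k,t_k)\to u(x_0)$, and passing the Aronson--B\'enilan inequality to the limit gives $\Delta\phi(x_0)+G(u(x_0))\ge 0$ as you intend. With that fix, the comparison and infimization steps go through and the proof is complete.
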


\begin{proof}

For any $\e>0$, take a smooth solution $w(x)$ of $-\Delta w \geq G(w)+\e$  with $w\geq \e$ in a domain $U$ containing the closure of $\Omega_2(t_0)$. We will show that $ \tilde p_2(\cdot,t) \leq w$. Then one can conclude by the definition of $p_2$.

\medskip

Due to  Lemma~\ref{easy}(a), $\phi(x,t):= M \frac{t-t_0}{t_1-t_0} +w(x)$ is above $p_m$ on the parabolic boundary of $\Sigma:= U\times [t_1,t_0]$.

Moreover, $\phi$ is a supersolution of \eqref{pressure} for sufficiently large $m$ since
$$
\phi_t +(m-1)\phi(-\Delta \phi - G(\phi)) - |\nabla \phi|^2 \geq -M/(t_1-t_0) +(m-1)\e^2 - |\nabla w|^2 \geq 0 \hbox{ for } m\gg 1.
$$

Thus we conclude that $p_m \leq \phi$ in $\Sigma$, which yields that $\tilde{p}\leq \phi$ in $\Sigma$.

\end{proof}

Now we are ready to show our main claim:

\begin{theorem}\label{th:convergence}
$p_1$ and $p^{\sigma}_2$ are respectively a supersolution of \eqref{FB} with $g=\frac{1}{1-\rho^E}$ and a subsolution of \eqref{FB} with $g^{\sigma}=\frac{1}{1-\rho^{\sigma,E}}$.
\end{theorem}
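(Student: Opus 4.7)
The plan is to verify the viscosity sub/supersolution conditions of Definitions~\ref{def:visc-test-sub} and \ref{def:visc-test-super} using the pressure equation \eqref{pressure} for the interior elliptic condition and the radial convergence result Theorem~\ref{thm:perthame:ext} for the free boundary motion. The two halves of the statement are handled in parallel, with $p_1$ as a supersolution and $p^{\sigma}_2$ as a subsolution.

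For the interior elliptic equation on $p_1$: take a smooth $\phi$ such that $p_1-\phi$ has a local minimum at $(\hat x,\hat t)$ with $p_1(\hat x,\hat t)>0$. Since $p_1=\halfliminf p_m$, the standard half-relaxed limit machinery produces $(x_m,t_m)\to(\hat x,\hat t)$ at which $p_m-\phi$ has a local minimum and $p_m(x_m,t_m)\to p_1(\hat x,\hat t)$. Feeding $\partial_t p_m=\phi_t$, $\nabla p_m=\nabla\phi$, $\Delta p_m\geq\Delta\phi$ into \eqref{pressure}, dividing by $(m-1)p_m>0$, and letting $m\to\infty$ gives $-\Delta\phi\geq G(p_1(\hat x,\hat t))$. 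For $p^{\sigma}_2$ the elliptic equation holds by definition: $p^{\sigma}_2$ is the minimal elliptic supersolution on $\Omega^{\sigma}_2(t)$, which is classically a solution of $-\Delta u=G(u)$ by Perron.

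For the free boundary condition I would use the barrier form Definition~\ref{def:visc-barrier}. To show $p_1$ is a supersolution, I fix a radial subbarrier $\psi$ of \eqref{FB} on a cylinder $E$ with $\psi\strictordwrt{E} p_1$ on $\partial_P E$ and argue that $\psi\strictordwrt{E} p_1$ in $\cl E$. I perturb $\psi$'s exterior density $\rho^E_\psi$ to lie slightly below $\rho^E$ near the cylinder, so that $\psi$ becomes a classical radial solution of \eqref{FB} with $g_\psi<g$ in the sense of Definition~\ref{de:radial solutions}; Theorem~\ref{thm:perthame:ext} then produces a PME approximation $\tilde p_m\to\psi$ uniformly. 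Strict separation on $\partial_P E$, uniform convergence, and $p_1=\halfliminf p_m$ give $\tilde p_m\leq p_m$ on $\partial_P E$ for large $m$; the PME comparison principle propagates this to $E$, and passing to the limit gives $\psi\leq p_1$ in $\cl E$, with strict separation preserved. The argument for $p^{\sigma}_2$ as a subsolution is dual, using radial superbarriers placed exterior to $\Omega^{\sigma}_2(t)$. The geometric input is the exterior ball property of radius $\sigma$ of $\Omega^{\sigma}_2(t)$ inherited from the sup-convolution defining $\rho^{\sigma}_m$, and the analytic input is the bound $\rho^{\sigma}_2\leq\rho^{\sigma,E}$ from Lemma~\ref{easy}(c), which lets me compare $\rho^{\sigma}_m$ with the density form of the PME approximation of the superbarrier. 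From $\Omega^{\sigma}_2(t)\subset\Omega_t(\psi)$ in the limit I conclude $p^{\sigma}_2\leq\psi$ by minimality of $p^{\sigma}_2$ and the comparison of elliptic solutions on ordered domains. The remaining structural conditions are Lemma~\ref{monotone} (nondecreasing support of $p_1$), Lemma~\ref{easy}(b) (inclusion $\{g=\infty\}\subset\cl\Omega(p_1)$), and the continuous expansion of $\Omega^{\sigma}_2$ from upper semicontinuity of $\rho^{\sigma}_2$ combined with the time-monotonicity of $\rho^{\sigma}_m$.

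The principal obstacle is the barrier step in the free boundary argument: one must match the radial exterior density of $\psi$ with the non-radial $\rho^E$ (respectively $\rho^{\sigma,E}$) at the touching point while preserving a strict ordering gap on $\partial_P E$. The saving observation is that in the regime $g(x_0,t_0)<\infty$ one has $\rho^E(x_0,t_0)<1$ with genuine slack, and this slack absorbs both the modulus of continuity of $\rho^E_0$ on the spatial scale $R$ of the cylinder and, in the case of $p^{\sigma}_2$, the mollification error $\omega(\sigma)$. This is morally the same slack that played the analogous role through the $\hat r$-thickening in \eqref{conv-order} of the comparison principle Theorem~\ref{th:comparison}.
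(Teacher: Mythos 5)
Your proposal rests on the same two pillars as the paper's proof---Theorem~\ref{thm:perthame:ext} to transfer information between the PME approximations and radial classical solutions of \eqref{FB}, plus the PME comparison principle and Lemma~\ref{easy} to control the density outside the tumor set---but it reorganizes the argument as a direct comparison rather than a contradiction. That difference in framing is not cosmetic: it hides the step where the paper does the real geometric work, and this is where the gap lies.

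The barriers in Definition~\ref{def:visc-barrier} are arbitrary $C^{2,1}$ functions with nonvanishing gradient on their free boundary, not radial ones. The paper's proof therefore begins by \emph{assuming} a crossing, locating the first contact point $(x_0,t_0)\in\partial\{\phi>0\}$ (after ruling out, by the elliptic property of $p^{\sigma}_2$ and by Lemma~\ref{easy}(c), a crossing in the interior of $\{\phi>0\}$ or in the exterior of $\overline{\{\phi>0\}}$), and only \emph{then} replacing $\phi$ near that contact point by a radial superbarrier $\varphi$ via a second-order Taylor expansion with an exterior ball of $\{\phi(\cdot,t_0)=0\}$. That replacement is precisely what feeds Theorem~\ref{thm:perthame:ext}, which is a result for cylinders with fixed radial boundary data. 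By ``fixing a radial subbarrier $\psi$'' from the outset, you have skipped the reduction from general to radial barriers; you would either need to prove that comparison with radial barriers is equivalent to Definition~\ref{def:visc-barrier} (which, for the barrier formulation, requires essentially the same contact-point-plus-Taylor-expansion argument the paper performs), or restructure your proof as a contradiction so that you have a contact point to localize at. Without one or the other, the direct ``compare, then pass to the limit'' chain does not establish the viscosity property as defined.

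Two smaller imprecisions that the paper's proof is careful about: (1) $\Omega^{\sigma}_2(t)$ has the \emph{interior} ball property of radius $\sigma$, not the exterior one, inherited from the sup-convolution defining $\rho^{\sigma}_m$; this is what Lemma~\ref{le:order} uses to identify $\overline{\{p^{\sigma}_2>0\}}$ with $\Omega^{\sigma}_2(t)$. (2) A subbarrier $\psi$ is by construction a \emph{strict} subsolution---$\psi_t-g\abs{D\psi}^2<-\delta$---so it cannot itself ``become a classical radial solution of \eqref{FB}''; rather, one constructs a genuine classical radial solution (in the sense of Definition~\ref{de:radial solutions}) sandwiched between the subbarrier and the candidate solution, and it is to that classical profile, with its matched initial data \eqref{rhom0}, that Theorem~\ref{thm:perthame:ext} applies. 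The paper also has to take care that the crossing it tracks is of the \emph{density} $\rho^{\sigma}_m$ against $\chi_{\{\varphi>0\}}+\chi_{\{\varphi=0\}}(\rho^{\sigma,E}+\delta)$ rather than of the pressure; you note this (``the density form of the PME approximation''), but the detail is the reason $p^{\sigma}_2$ had to be introduced separately from $\tilde p_2$ in the first place, and it deserves to be made explicit.
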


First note that Lemma~\ref{easy} will allow us to treat the limiting density outside of the maximal density zone essentially as $\rho^E$.

\begin{proof}

1. We will use Definition~\ref{def:visc-barrier}. Let us show the subsolution part first. Suppose $p^{\sigma}_2$ is not a subsolution of \eqref{FB} with $g^{\sigma}$.  This means that there is a {\it superbarrier} $\phi$ of \eqref{FB} with $g^{\sigma}$  in $U:=\{|x-z_0|\leq r\}\times [t_1,t_2]$ which crosses $p^{\sigma}_2$ from above at $t=t_0$: In other words, we have
\begin{itemize}
\item $p^{\sigma}_2 \prec \phi$ on the parabolic boundary of $U$;\\

\item  $p^{\sigma}_2\prec \phi$ in $U\cap\{t_1\leq t< t_0\}$;\\

\item $\sup_{|x-z_0|\leq r}(p^{\sigma}_2-\phi)(\cdot,t) >0$ for $t_0<t<t_2$.\\

\end{itemize}

Since $\phi$ is a superbarrier of \eqref{FB}, there exists $\delta>0$ such that  $\rho^{\sigma,E}
<1-2\delta$ in $B_{\delta}(x_0) \times [t_0 - \delta, t_0 + \delta]$  and
\begin{equation}\label{velocity}
V_{\phi} > \frac{|\nabla \phi|}{1-(\rho^{\sigma,E}+\delta)}  \quad\hbox{ on } \partial\{\phi>0\}\cap \{t\leq t_0\}.
\end{equation}

2. From its definition, $p^{\sigma}_2$ cannot cross $\phi$ before its support crosses that of $\phi$.
It follows that $\chi_{\{p^{\sigma}_2>0\}}(\cdot,t_0)$ crosses $\chi_{\{\phi>0\}}$ at $t=t_0$, and thus
along a subsequence $\rho^{\sigma}_m \geq\chi_{\{\phi>0\}} + \chi_{\{\phi=0\}}(\rho^{\sigma,E} + \delta)$ for the
first time  at $(x_m,t_m)$ with $t_m\to t_1 \leq t_0$ as $m\to\infty$. Note that the crossing point
exist since $\rho_m$ is continuous in time.

Let $x_0$ be a limit point of $\{x_m\}$. If $\phi(x_0,t_1)>0$ then we have a contradiction since in
that case it can be easily checked that $\phi$ is a supersolution of \eqref{pressure} in a
neighborhood of $(x_0,t_1)$ for sufficiently large $m$. Also due to Lemma 4.4 (c) and the fact
that, from Lemma~\ref{le:order},
$$
\{\rho^{\sigma}_2=1\}= \overline{\{p^{\sigma}_2(\cdot,t)>0\}}\subset\{\phi(\cdot,t)>0\} \hbox{ for } t<t_0,
$$

the limit point $(x_0,t_1)$
cannot be outside of $\overline{\{\phi>0\}}$.  Hence $(x_0,t_1)$ lies on $\partial\{\phi>0\}$, and
$t_1=t_0$.

\medskip

Relying on the continuity of $\rho^E$, let us choose $0<r<\delta$ such that
\begin{equation}\label{density_0}
\rho^{\sigma,E} \leq \rho^{\sigma,E}(x_0,t_0)+\frac{\delta}{2} \leq \rho^{\sigma,E} + \delta\hbox{ in }D:=B_r(x_0)\times [t_0-r,t_0+r].
\end{equation}
  We now localize $\phi$ in $D$ to a radial profile. Since $|D\phi|\neq 0$ on $\partial\{\phi>0\}$, it follows from the regularity of $\phi$ that $\partial\{\phi(\cdot,t_0)>0\}$ is a $C^2$ surface. Therefore we can choose $r$ in above definition of $D$ small enough such that there is a exterior ball $B_{r/2}(y_0)$ in $\{\phi(\cdot,t_0)=0\}$ touching $x_0$ on its boundary. We can then use the Taylor expansion of $\phi$ up to the second order in space and first order in time to construct
  a new radial superbarrier $\varphi(x,t) = \varphi(|x-y_0|,t)$ of \eqref{FB} in $D$ satisfying \eqref{velocity} such that $\{\varphi(\cdot,t_0)=0\}=B_{r/2}(y_0)$ and $\varphi >\phi$ on the parabolic boundary of $D$.
 Then, replacing $\varphi$ with $\varphi(x,t)$ for sufficiently small $\e>0$ if necessary,  $p^\sigma_m:= P_m(\rho_m^{\sigma})$ crosses $\varphi(x,t)$ in $D$ for large $m$.
Due to Lemma~\ref{easy}(c) and \eqref{density_0},
\begin{equation}\label{density}
\rho_{\varphi}:= \chi_{\{\varphi>0\}} + (\bar{\rho}^{\sigma,E}-1)> \rho^{\sigma}_m \hbox{ on the parabolic boundary of } D
\end{equation}
for large $m$, where $\bar{\rho}^{\sigma,E}:= \rho^{\sigma,E}(x_0,t_0)+\frac{\delta}{2}$.

\medskip

Now let $\tilde{\rho}_m:=\rho_{\varphi,m}$ be the corresponding solution of \eqref{pme} in $D$, with fixed data $1$ on $\partial B_r(x_0)$ with approximating initial data given as in \eqref{rhom0} in section \ref{se:convergence radial}.  Note that, due to the comparison principle of \eqref{pme}, $\rho^{\sigma}_m \leq \tilde{\rho}_m$ in $D$.   On the other hand, the solution $(p,\bar{\rho}^{\sigma,E})$ of \eqref{FB} in $D$ satisfies $p \prec \varphi$ in $D$ due to \eqref{velocity}. Due to Theorem~\ref{thm:perthame:ext} $\limsup_{m\to\infty} \tilde{\rho}_m =\bar{\rho}^{\sigma,E} <\rho^{\sigma,E}+\delta$ outside of the support of $p$ in $D$, in particular in the zero set of $\varphi$ in $D$.  This contradicts the fact that $\rho^{\sigma}_m$ crosses $\chi_{\{\varphi>0\}} + \chi_{\{\varphi=0\}}(\rho^{\sigma,E} +\delta)$ in $D$. We can now conclude.

\medskip

3. For the supersolution part, first note that  the requirement $\{\rho^E\geq 1\}\subset \overline{\{p_1>0\}}$ is satisfied by Lemma~\ref{easy}(b). Next suppose a {\it subbarrier}  $\phi$ of \eqref{FB} crosses $p_1$ from below in $\{|x-z_0| \geq r\}\times [t_1,t_2]$ at $t=t_0$. Parallel arguments as above using Lemma~\ref{easy}(b) would yield the conclusion.

\end{proof}

Lastly, to apply  comparison principle for $p_1$ and $p_2$, we show that the initial data for $\rho_i$'s and $p_i$'s respectively coincide.

\begin{lem}\label{le:initial}
At $t=0$ we have

\begin{itemize}
\item[(a)]$\lim_{t\to 0^+}\rho_i(\cdot,t)= \rho_0 := \rho_0^E \chi_{\Omega_0^c} + \chi_{\Omega_0}$ locally uniformly away from $\partial\Omega_0$;\\
\item[(b)] $\lim_{t\to 0^+} p_i(\cdot,t) = p_0$ uniformly;
\end{itemize}
where $p_0$ is the unique solution of $-\Delta p=G(p)$ in $\Omega_0$ with zero boundary data on $\partial\Omega_0$.
\end{lem}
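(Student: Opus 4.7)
The plan is to establish (a) and (b) by sandwiching $\rho_m$ and $p_m$ between barriers whose joint limits as $m\to\infty$ and $t \to 0^+$ coincide with $\rho_0$ and $p_0$, respectively. A key observation is that the matching construction \eqref{matched} gives tight control on the initial data: since $\rho_{0,m}^E \leq 1-\delta$, we have $P_m(\rho_{0,m}^E) \leq \frac{m}{m-1}(1-\delta)^{m-1} \to 0$ uniformly, and therefore $p_{0,m} = \max(p_0, P_m(\rho_{0,m}^E)) \to p_0$ uniformly on $\R^n$, while $\rho_{0,m} \to \rho_0$ locally uniformly away from $\partial\Omega_0$ (limiting to $1$ inside $\Omega_0$ and to $\rho_0^E$ outside). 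Combined with Lemma~\ref{monotone}, which gives $\rho_m(\cdot, t) \geq \rho_{0,m}$ and hence $p_m(\cdot, t) \geq p_{0,m}$ for all $t \geq 0$ when $m$ is large, taking half-relaxed liminfs immediately yields $p_1(\cdot, t) \geq p_0$ and $\rho_1(\cdot, t) \geq \rho_0$ for all $t > 0$, locally uniformly away from $\partial\Omega_0$. These supply both lower bounds.

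For the upper bound in (b), given $\epsilon > 0$ I would construct a smooth, strict supersolution $w_\epsilon$ of $-\Delta w = G(w)$ on a smooth neighborhood $U$ of $\overline{\Omega_0}$ with $w_\epsilon \geq \mu > 0$ on $U$, and $\|w_\epsilon - p_0\|_{L^\infty(\R^n)} \leq \epsilon$ (extending $w_\epsilon$ by a small positive constant outside $U$). Then as in Lemma~\ref{le:press-ordered}, the function $\phi(x,t) := w_\epsilon(x) + Mt$ is a classical supersolution of the pressure equation \eqref{pressure} on $U \times [0, t_*]$ for sufficiently large $M$ and $m$, the $(m-1)\mu^2$ term dominating the gradient term. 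On the parabolic boundary, $\phi(\cdot, 0) = w_\epsilon \geq p_0 \geq p_{0,m} - o_m(1)$, and on $\partial U \times [0, t_*]$ we have $\phi \geq \mu > p_m$ once $t_*$ is small enough, using the finite-propagation property applied to $p_m$ whose initial support is essentially $\Omega_0 \subset\subset U$. Comparison yields $p_m \leq w_\epsilon + Mt$ on $U$; sending $t \to 0^+$ and then $\epsilon \to 0$ gives $p_2(\cdot, 0^+) \leq p_0$ uniformly, which combined with the lower bound completes (b).

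For (a), the argument splits by location. On a compact $K \subset\subset \Omega_0$, we have $p_0 \geq c > 0$, so $\rho_{0,m} \geq P_m^{-1}(p_0) \geq ((m-1)c/m)^{1/(m-1)} \to 1$; combined with the upper bound $\rho_2^\sigma \leq 1$ from Lemma~\ref{easy}(a) and the monotonicity, $\rho_m \to 1$ uniformly on $K$. On a compact $K \subset\subset \Omega_0^c$, the strict inequality $\rho_0^E < 1$ from \eqref{initial0} gives $\rho_0^E < 1 - 2\delta$ on $K$ for some $\delta > 0$. The supersolution barrier $\phi(x,t) = e^{(G(0)+\epsilon)t}\varphi(x)$ from the proof of Lemma~\ref{easy}(c), with $\varphi$ slightly above $\rho_0^E$, gives $\rho_m \leq \rho_0^E + O(\epsilon)$ on $K$ for $t \in [0, t_*]$ and $m$ large; the matching lower-bound barrier from Lemma~\ref{easy}(b) yields $\rho_m \geq \rho_0^E - O(\omega(r))$ locally, where $\omega$ is the modulus of continuity of $\rho_0^E$. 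Together these give the locally uniform convergence $\rho_i(\cdot, t) \to \rho_0^E$ as $t \to 0^+$ on $K$.

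The principal technical obstacle is the construction of the supersolution $w_\epsilon$ in (b). One needs a smooth, strictly positive, strict supersolution of the elliptic problem on a neighborhood of $\overline{\Omega_0}$ that is $L^\infty$-close to $p_0$ globally, even in the thin region between $\partial\Omega_0$ and $\partial U$ where $p_0 = 0$. The $C^{1,1}$ regularity of $\partial\Omega_0$ is essential here: it allows one to solve $-\Delta w = G(w) + \mu$ on a slightly enlarged smooth domain with small positive Dirichlet data and obtain uniform convergence of these solutions to $p_0$ as the enlargement and $\mu$ are sent to zero, via standard elliptic boundary regularity.
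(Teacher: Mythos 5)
Your lower-bound argument via monotonicity (Lemma~\ref{monotone}) and the identity $p_{0,m}=\max(p_0,P_m(\rho^E_{0,m}))\to p_0$ is sound and essentially matches what the paper uses implicitly. The supersolution barrier $\phi=w_\e+Mt$ for the upper bound of (b) is also a reasonable idea, and in fact, since $w_\e$ is a \emph{strict} supersolution of the elliptic problem, the right-hand side of \eqref{pressure} is $\leq -(m-1)\mu\e'+|Dw_\e|^2\to-\infty$, so any $M\geq 0$ works --- which is tidier than the paper's $b(t)g$ construction.

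However, there is a genuine gap at the point you dismiss as ``finite propagation.'' For the upper bound in (b) you need $p_m\leq \mu$ on $\partial U\times[0,t_*]$ uniformly in $m$. Because $\rho_{0,m}^E>0$, the density $\rho_m$ is strictly positive everywhere, so the pressure equation is strictly parabolic and $p_m>0$ everywhere; there is no literal free boundary whose speed could be bounded by a classical PME finite-propagation argument. What must be controlled is how fast the set $\set{\rho_m\approx 1}$ (equivalently $\set{p_m>\mu}$) spreads, and the propagation speed of \emph{that} set in the $m\to\infty$ limit is governed by $V=|Dp|/(1-\rho^E)$, which depends on the exterior density. This is not a standard finite-propagation fact and requires a barrier. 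The paper's Lemma~\ref{le:initial} handles it by (i) first proving (a), i.e., $\{\rho_2=1\}\cap\{t=0\}=\overline{\Omega_0}\times\{0\}$, via an explicit radial supersolution of \eqref{FB} near $\partial\Omega_\e$ together with Theorem~\ref{thm:perthame:ext}, and (ii) then using (a) to obtain the smallness of $p_m$ on $\partial\Omega_g$. Your proof attempts to run (b) before (a) and without Theorem~\ref{thm:perthame:ext}, but provides no substitute barrier; as written, the boundary control on $\partial U$ is simply assumed.

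The same issue recurs in your argument for (a) on a compact $K\subset\subset\Omega_0^c$. The barrier $\phi(x,t)=e^{(G(0)+\e)t}\varphi(x)$ from Lemma~\ref{easy}(c) can only be applied on a ball $B_R(x_0)$ once you know $\rho_m\leq\phi$ on $\partial B_R(x_0)\times[0,t_*]$; on the spatial boundary $\phi=(1-\delta)e^{(G(0)+\e)t}$, which stays below $1$ for a positive time, so the comparison inequality there is not automatic --- it requires precisely that the set $\set{\rho_m\approx 1}$ has not yet reached $\partial B_R(x_0)$, which is what you are trying to prove. (You also cannot normalize $\varphi=1$ on $\partial B_R(x_0)$, since then $\Delta(\varphi^m)$ and $G(p_\varphi)$ no longer behave as needed for the supersolution inequality.) The circularity is exactly what the paper breaks using the radial Hele-Shaw barrier of velocity $V\leq M/(\e\delta)$ on the annulus $B_{2\e}(x_0)\setminus B_{\e(t)}(x_0)$ plus Theorem~\ref{thm:perthame:ext}. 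Your proposal, as written, misses this step.
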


\medskip

\begin{proof}
1. Let us first show $(a)$. First of all note that $\rho^E$ converges uniformly to $\rho_0$ away from $\Omega_0=\{\rho_0=1\}$. Also note that, from their definition, $\Omega_2^{\sigma}(t)$ converges to $\Omega_2(t)$ in Hausdorff distance as $\sigma\to 0$.

  Hence
by Lemma~\ref{easy} we have
\begin{equation}\label{easy2}
\rho^E = \rho_1=\rho_2 \hbox{ outside of } \{\rho_2=1\}.
\end{equation}

Moreover, by Lemma~\ref{easy} we have $\rho_1\geq 1$ on $\Omega_0$. Hence it is enough to show that
\begin{equation}\label{claim00}
\{\rho_2=1\}\cap \{t=0\} = \overline{\Omega_0} \times \{ t = 0 \}.
\end{equation}
To this end we consider the domain
$$
\Omega_\e:=\{x: d(x,\Omega_0)\leq 3\e\}
$$
for a given $\e>0$, and choose a point $x_0\in\partial\Omega_\e$.  By our assumption there exists $\delta>0$ depending on $\e$ such that  $\rho_0\leq 1-2\delta$ in $B_{2\e}(x_0)$, and thus
\begin{equation}\label{surrounding_density}
\rho^E \leq 1-\delta\hbox{ in } B_{2\e}(x_0)\times [0,t_1]\hbox{ for some }t_1>0.
\end{equation}
 Let us now consider the radial function $\phi(x,t)$ in $B_{2\e}(x_0)-B_{\e(t)}(x_0)$  such that $\phi=0$ on $\partial B_{\e(t)}(x_0)$, $\phi=1$ on $B_{2\e}(x_0)$ and
$$
-\Delta \phi(x) = G(0) \hbox{ in } B_{2\e}(x_0)-B_{\e(t)}(x_0).
$$

Note that we have $|D\phi| \leq M/\e$ on $\partial B_{\e(t)}(x_0)$ where $M$ is independent of $\e$ as long as $\e(t) \geq \e/2$. Combining this fact and \eqref{surrounding_density}, it follows that
if we choose $\e(t)=(\e-\frac{M}{\e\delta} t)$ and $\rho^E_{\phi}(0)=1-2\delta$, then  $(\phi, \rho^E_{\phi})$ is a supersolution of \eqref{FB} in $B_{2\e}(x_0)\times [0, t_\e]$, where $t_\e = \min[\frac{\e^2\delta}{M},t_1]$. This and Theorem~\ref{thm:perthame:ext} yields that
$$
\rho_2 \leq \rho_{\phi}^E <1\hbox{ in } B_{\e/2}(x_0)\times [0,t_\e].
$$
This concludes \eqref{claim00} and therefore (a).

\medskip

2. Next we prove (b). To this end we need to ensure that $p_m$ does not vanish inside of $\Omega_0$. Again this follows from Theorem~\ref{thm:perthame:ext}, since at each interior point $x_0\in \Omega_0$ with $B_r(x_0)\in\Omega_0$ for some $r>0$ we can consider radial solution of \eqref{FB} with $\rho^E_{\phi}=0$ and apply Theorem~\ref{thm:perthame:ext} to show that the corresponding solutions $\tilde{p}_m$ of \eqref{pressure} uniformly converges to $\phi$. Now we can conclude since $p_m \geq \tilde{p}_m$ by the comparison principle of \eqref{pressure}.

\medskip

3. Now we are ready to prove (b). Fix $\e>0$ and define
$$
\Omega_f := \{x: \dist(x, \R^n \setminus \Omega_0) > \e \}\hbox{ and }\Omega_g:=\Omega_\e=\{x:\dist(x,\Omega_0) \leq \e\}.
$$
In view of  (a) and step 2.,  there exist $\delta = \delta(\e)>0,  t_0=t_0(\e)>0$ and $M$ such that
for $m>M$ and $0\leq t\leq t_0$ the following holds:  $p_m \leq \delta$ on $\partial\Omega_g$ , $p_m \geq \delta$  in
$\Omega_f$. Let us consider $f$ and $g$ defined by

\begin{equation*}
- \Delta f = G(f)-\e \hbox{ in } \Omega_f  \ \text{ and} \  f=\delta\hbox{ on }\partial\Omega_f,
\end{equation*}
and
\begin{equation*}
 -\Delta g = G(g)+\e\hbox{ in }\Omega_g  \ \text{ and} \  g=\delta\hbox{ on }\partial\Omega_g.
\end{equation*}

%$$
%- \Delta f = G(f)-\e \hbox{ in } \Omega_f \hbox{ and } -\Delta g = G(g)+\e\hbox{ in }\Omega_g
%$$
%with boundary data
%$$
%f=\delta\hbox{ on }\partial\Omega_f\hbox{ and }g=\delta\hbox{ on }\partial\Omega_g.
%$$
%
Let
$$
\phi(x,t):= a(t)f(x) \hbox{ and } \psi(x,t) := b(t) g(x),
$$
where
$$
  a(t):= \min [\delta e^{\frac{m}{2}\e t} ,1] \hbox{ and } b(t) := \max[\delta^{-1}e^{-\frac{m}{2}\e\delta t},1].
$$
 Note that the gradient of $f$ is bounded from above in $\Omega_f$. Using this fact, direct calculations then yield that for sufficiently large $m$ and $\phi$ and $\psi$  are respectively subsolution and supersoluton to \eqref{pressure} in $\Omega_f\times (0,t_0]$ and $\Omega_g \times (0,t_0]$.   Thus the comparison principle for \eqref{pressure} and the choice of $\delta$ and $t_0$ yield
$$
\psi\leq p_m \hbox{ in } \Omega_g \times [0,t_0] \hbox{ and } p_m \leq \phi\hbox{ in }\Omega_f \times [0,t_0].
$$
Letting $m\to\infty$ and using arbitrarily small $\e>0$,  we conclude that the  $p_m$'s converge  uniformly  to the solution of the elliptic equation $\Omega_0$ with zero boundary data.

\smallskip

\end{proof}

\bigskip

Theorem~\ref{th:convergence} and Lemma~\ref{le:initial} together yield our main result:

\begin{cor}\label{cor:main}
Let $p$ be the unique lower-semicontinuous viscosity solution of \eqref{FB} given by Theorem~\ref{th:hs-well-posedness}. Then the following holds as $m\to\infty$:

\begin{itemize}
\item[(a)] $\halflimsup p_m = p^*$ and $\halfliminf p_m = p_*$.
\item[(b)] $\rho_m$ uniformly converges to $\rho:= \chi_{\{p>0\}} + \rho^E \chi_{\set{p = 0}}$ away from $\partial\{p>0\}$.\\
\end{itemize}
\end{cor}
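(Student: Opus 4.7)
The strategy is to sandwich the half-relaxed limits between the USC and LSC envelopes of the unique solution $p$ of \eqref{FB} by applying the comparison machinery of Section~2 to the two endpoints provided by Theorem~\ref{th:convergence}, with initial data matched via Lemma~\ref{le:initial}. The trivial inequality
\[
p_* \;\leq\; p_1 \;=\; \halfliminf p_m \;\leq\; \halflimsup p_m \;=\; \tilde p_2 \;\leq\; p^*
\]
is what we need; once this is in place, part (a) follows by taking USC/LSC envelopes, and part (b) is extracted from (a) together with the barrier arguments already developed in Lemma~\ref{easy}(c) and Theorem~\ref{thm:perthame:ext}.

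\emph{Lower bound $p_* \leq p_1$.} By Theorem~\ref{th:convergence}, $p_1 \in \supers(g; Q)$ with $g = 1/(1-\rho^E)$; by Lemma~\ref{le:initial}, $\{(p_1)_{*,Q}(\cdot,0)>0\} = \Omega_0$. Since $p_1$ is already lower-semicontinuous (it is a $\halfliminf$), $(p_1)_{*,Q} = p_1$. Applying Theorem~\ref{th:comparisonhsg} with $u = p$ and $v = p_1$ yields $p_* \leq p_1$ and $p^* \leq (p_1)^*$ directly.

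\emph{Upper bound $\tilde p_2 \leq p^*$.} This is the delicate direction, because Theorem~\ref{th:convergence} only gives $p_2^\sigma \in \subs(g^\sigma; Q)$ for the \emph{perturbed} velocity $g^\sigma = 1/(1-\rho^{\sigma,E}) \geq g$, so we cannot compare against $p$ directly. I follow the time-shift perturbation from the uniqueness proof of Theorem~\ref{th:hs-well-posedness}: for $\alpha>1$ and $\tau>0$ the function $\tilde p(x,t) := p(x,\alpha t + \tau)$ lies in $\supers(\tilde g; Q)$ with $\tilde g(x,t) = \alpha\, g(x,\alpha t+\tau)$. Fixing a compact $K$, Lemma~\ref{le:g-shift-big} produces $\hat r>0$ with $\tilde g$ dominating $g$ in the sense of \eqref{conv-order} on $K$. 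Because $\rho_0^E$ is continuous, $\rho^{\sigma,E} \to \rho^E$ locally uniformly as $\sigma\to0$, so for $\sigma$ sufficiently small we also have the averaged inequality $\sup g^\sigma \leq \inf \tilde g$ with buffer $\hat r$ on $K$. The initial ordering $\{(p_2^\sigma)^{*,Q}(\cdot,0)>0\} = \Omega_0 \subset\subset \{\tilde p(\cdot,0)>0\} = \Omega_\tau(p)$ follows from Lemma~\ref{le:initial} together with the initial expansion estimate $\Omega_0\subset\subset\Omega_\tau(p)$ established in step~2 of the uniqueness proof in Theorem~\ref{th:hs-well-posedness}. Theorem~\ref{th:comparison} then gives $(p_2^\sigma)^* \prec \tilde p$ in $K$. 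Sending first $\sigma\to 0^+$ (using $p_2^\sigma \downarrow \tilde p_2$ along appropriate compacts, since $\rho^\sigma_m$ decreases to $\rho_m$ as $\sigma\to0$), then $\alpha\to 1^+$ and $\tau\to 0^+$, and using the USC of $p^*$ at $t$, one concludes $\tilde p_2 \leq p^*$.

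\emph{Conclusion.} The sandwich above shows $\halfliminf p_m \leq p_* \leq p^* \leq \halflimsup p_m$ after taking envelopes, while the reverse inequalities are built in, proving (a). For (b), on any compact $K \subset \{p>0\}$ the uniform lower bound $p_m \geq \epsilon>0$ coming from $\halfliminf p_m = p_* > 0$ gives $\rho_m = \bigl(\tfrac{m-1}{m}\bigr)^{1/(m-1)} p_m^{1/(m-1)} \to 1$ uniformly. On any compact $K \subset \operatorname{int}\{p=0\}$, part (a) forces $p_m \to 0$ uniformly, and the supersolution barrier built in Lemma~\ref{easy}(c) (together with the Lipschitz comparison from Theorem~\ref{thm:perthame:ext}) upgrades $\rho_m \to \rho^E$ to uniform convergence on $K$. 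The principal obstacle is the upper-bound step, in particular coordinating the three perturbations $\sigma$, $\alpha$, $\tau$ so that both the velocity ordering \eqref{conv-order} and the strict initial-support ordering hold on the relevant compact set.
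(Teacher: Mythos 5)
Your overall strategy (sandwich the half\-/relaxed limits between $p_*$ and $p^*$ using the comparison machinery) is the right one, and your route for the upper bound is a genuine alternative to the paper's. The paper first takes $\bar p := (\liminf_{\sigma\to 0} p_2^\sigma)_*$, invokes stability of viscosity solutions to conclude that $\bar p$ is a subsolution with the \emph{unperturbed} coefficient $g$, and then applies the ready-made Theorem~\ref{th:comparisonhsg} to compare $\bar p$ with $p_1$. You instead keep $\sigma$ fixed and feed $g_1 = g^\sigma$, $g_2 = \tilde g = \alpha g(\cdot,\alpha\cdot+\tau)$ into the raw Theorem~\ref{th:comparison}. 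This is legitimate in principle, but it needs an extension of Lemma~\ref{le:g-shift-big} that you only sketch: one must verify \eqref{conv-order} with $g^\sigma$ in place of $g$, i.e., that the enlarged blow-up set $\{g^\sigma=\infty\}$ is still separated by a uniform buffer $\hat r$ from $\{\tilde g < \infty\}$ on the relevant compact; this is exactly the place where the paper's route is cleaner. A minor slip: $p_2^\sigma$ decreases to $p_2$, not to $\tilde p_2$, as $\sigma\to 0$ (only the inequality $\tilde p_2 \le p_2$ from Lemma~\ref{le:press-ordered} is available), though this doesn't break your bound $\tilde p_2 \le p^*$.

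There is, however, a real gap in your concluding paragraph, and it is the step that the paper's construction of $\bar p$ is designed to cover. You write that the sandwich ``shows $\halfliminf p_m \leq p_* \leq p^* \leq \halflimsup p_m$ after taking envelopes, while the reverse inequalities are built in.'' This is backwards: what your two comparison steps produce is $p_* \leq p_1 = \halfliminf p_m$ and $\tilde p_2 = \halflimsup p_m \leq p^*$, i.e., the half-relaxed limits are squeezed \emph{between} $p_*$ and $p^*$. The ``built-in'' inequality is only $p_1 \leq \tilde p_2$; there is no free inequality $\halfliminf p_m \leq p_*$. Getting $p^* \le \tilde p_2$ can be salvaged by applying Theorem~\ref{th:comparisonhsg} with $u = p$, $v = p_1$ to get $p^* \le (p_1)^* \le \tilde p_2$; but to force $p_1 \leq p_*$ one genuinely needs to know that $p_1$ is a \emph{sub}solution as well as a supersolution (so that, as a full solution with the correct initial data, it must coincide with $p_*$ by the uniqueness Theorem~\ref{th:hs-well-posedness}). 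The paper gets this by sandwiching $(p_2)_* \leq \bar p \leq p_1 \leq p_2$ and concluding $p_1 = \bar p = (p_2)_*$, so that $p_1$ inherits the subsolution property from $\bar p$. Your route produces only a one-sided bound $\tilde p_2 \le p^*$ and does not yield a subsolution to pin $p_1$ down from below, so part (a) is not fully proved.

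Finally, for (b), the paper's argument away from $\overline{\{p>0\}}$ is a tight two-sided squeeze: Lemma~\ref{le:order} gives $\overline{\{p>0\}} = \{\rho_2 = 1\}$, then Lemma~\ref{easy}(c) gives $\rho_2 \le \rho^E$ and Lemma~\ref{easy}(b) gives $\rho_1 \ge \min[1,\rho^E]$, so $\rho_1 = \rho_2 = \rho^E < 1$ on compacts away from $\overline{\{p>0\}}$, and equality of the two half-relaxed limits yields locally uniform convergence. You invoke Theorem~\ref{thm:perthame:ext} here, which is not needed at this stage (it is the input to Lemma~\ref{easy}, not a tool to be used again in the corollary).
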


\begin{proof}
From Theorem ~\ref{th:convergence} and the stability property of viscosity solutions of \eqref{FB}, it follows that $\bar{p}:=(\liminf_{\sigma\to 0} p_2^{\sigma})_*$ is a supersolution of \eqref{FB} with $g = \frac{1}{1-\rho^E}$. Due to Lemma~\ref{le:initial} (a) and the convergence of $\Omega^{\sigma}_2(0)$ to $\Omega_0$ in Hausdorff distance, we conclude that $\bar{p}(\cdot,t)$ uniformly converges to $p_0(\cdot,0)$ as $t\to 0$.

From the comparison principle Theorem~\ref{th:comparisonhsg} it follows that $\bar{p} \leq p_1$. Since $p_2 \leq p_2^{\sigma}$ for any $\sigma>0$, it follows that  $(p_2)_* \leq \bar{p}\leq  p_1$. Since $p_1\leq p_2$ by definition, this means $(p_1)^* = (p_2)^*$ and $(p_1)_*=(p_2)_*$. This yields that $p=p_1=(p_2)_*$ is a viscosity solution of \eqref{FB} with surrounding density $\rho^E$, and this yields (b). The convergence of $\rho_m$ in the interior of $\{p>0\}$ then follows from (b).

\medskip

It remains to show that $\rho_m$ converges to $\rho^E$ away from $\{p>0\}$. Note that due to
Lemma~\ref{le:order}
$$
\overline{\{p>0\}}=\overline{\{p_2>0\}} = \{\rho_2=1\}.
$$
This and Lemma~\ref{easy} (c) yields that $\halflimsup_{m\to\infty}\rho_m =\rho_2\leq \rho^E$ away
from $\overline{\{p>0\}}$. Now we conclude by Lemma~\ref{easy}(b), which says $\halfliminf_{m\to\infty} \rho_m =\rho_1\geq \min[1,\rho^E]$.

\end{proof}

Recall that an ``almost'' contraction property is available for any two solutions $\rho_m$, $\hat
\rho_m$ of \eqref{pme} from \cite[(2.12)]{PQV} in the form
\begin{equation}\label{contraction}
\norm{\rho_m(t) - \hat \rho_m(t)}_1 \leq e^{G(0) t} \norm{\rho_m(0) - \hat\rho_m(0)}_1 \quad \hbox{ for any } t>0.
\end{equation}

Using the above formula as well as the uniform convergence result obtained in Corollary~\ref{cor:main} and Corollary~\ref{cor:00}, we have the following convergence result for general approximating initial data $\rho_{0,m}$:

\begin{cor}\label{cor:general}
Let $\rho_0 := \chi_{\Omega_0} + \rho^E_0 \chi_{\Omega_0^c}$ with $\Omega_0$, $\rho^E_0$ as given in
\eqref{initial0}, with Lipschitz continuous $\rho_0^E$. Suppose that $\rho_{0,m}$ converge to $\rho_0$ in $L^1(\R^n)$. Then the corresponding solution $\rho_m$ of \eqref{pme} with the initial data $\rho_{0,m}$ converges to $\rho$ as given in Corollary~\ref{cor:main} in the following sense:
$$
\norm{\rho_m(t) - \rho(t)}_1 \to 0 \hbox{ as } m\to \infty \hbox{ for a.e. } t>0.
$$
\end{cor}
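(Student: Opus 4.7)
The plan is to reduce to the well-prepared case already covered by Corollary~\ref{cor:main} by interpolating through $\tilde\rho_m$, the solution of \eqref{pme} whose initial datum $\tilde\rho_{0,m}$ is the matched approximation \eqref{matched} (for instance with $\rho^E_{0,m} = \rho^E_0 * \eta_{1/m}$). Since $\tilde\rho_{0,m} \to \rho_0$ in $L^1(\R^n)$, applying the $L^1$-contraction \eqref{contraction} to the pair $\rho_m, \tilde\rho_m$ gives, for every $t > 0$,
\[
\norm{\rho_m(t) - \tilde\rho_m(t)}_1 \leq e^{G(0) t}\norm{\rho_{0,m} - \tilde\rho_{0,m}}_1 \longrightarrow 0 \quad \text{as } m \to \infty.
\]
By the triangle inequality the problem therefore reduces to showing $\norm{\tilde\rho_m(t) - \rho(t)}_1 \to 0$ for a.e. $t > 0$.

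For this reduced statement I would combine the two refined results already in hand. Corollary~\ref{cor:main}(b) gives $\tilde\rho_m \to \rho$ locally uniformly away from $\partial\set{p > 0}$, while Corollary~\ref{cor:00} (which uses precisely the Lipschitz hypothesis on $\rho^E_0$) ensures that $\partial\set{p > 0}$ has Lebesgue measure zero in $\R^n \times [0,\infty)$. By Fubini, for a.e. $t$ the spatial slice $(\partial\set{p > 0})_t$ has measure zero in $\R^n$, and for any such $t$ we obtain pointwise a.e.\ convergence $\tilde\rho_m(\cdot, t) \to \rho(\cdot, t)$ on $\R^n$. Together with the uniform bound $\tilde\rho_m \leq 1 + o(1)$ from Lemma~\ref{easy}(a), the dominated convergence theorem then gives $L^1(B_R)$ convergence for every fixed $R > 0$.

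The main obstacle is the upgrade from $L^1_{\textup{loc}}$ to $L^1(\R^n)$, i.e.\ uniform integrability of $\set{\tilde\rho_m(t)}$ at infinity. I would again exploit \eqref{contraction}: comparing $\tilde\rho_m$ with the solution $\tilde\rho_m^R$ of \eqref{pme} starting from the truncation $\tilde\rho_{0,m}\chi_{B_R}$ yields
\[
\norm{\tilde\rho_m(t) - \tilde\rho_m^R(t)}_1 \leq e^{G(0) t}\norm{\tilde\rho_{0,m}}_{L^1(\R^n \setminus B_R)},
\]
and since $\set{\tilde\rho_{0,m}}$ converges in $L^1$ it is uniformly integrable, so the right-hand side is uniformly small in $m$ as $R \to \infty$; analogous tightness of $\rho(\cdot, t)$ follows from $\rho \leq \chi_{\Omega(t)} + \rho^E$ together with $\rho_0 \in L^1$. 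Splitting $\R^n = B_R \cup B_R^c$ and sending first $m \to \infty$ (to kill the $B_R$-part by dominated convergence) and then $R \to \infty$ (to kill the tails) produces $\norm{\tilde\rho_m(t) - \rho(t)}_1 \to 0$ for a.e.\ $t > 0$, which combined with the first step completes the argument.
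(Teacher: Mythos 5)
The paper does not give a detailed argument for this corollary: it simply points to the contraction inequality \eqref{contraction} together with Corollary~\ref{cor:main} and Corollary~\ref{cor:00}. Your proposal fills in those details along exactly the lines the authors intend: reduce to the well-prepared data by \eqref{contraction}, pass to pointwise a.e.\ convergence via Corollary~\ref{cor:main}(b), Corollary~\ref{cor:00} and Fubini, and then upgrade to $L^1$ convergence via dominated convergence plus tightness. In that sense the strategy matches the paper's.

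There is, however, a genuine gap in the tightness step. After establishing
$\norm{\tilde\rho_m(t) - \tilde\rho_m^R(t)}_1 \leq e^{G(0)t}\norm{\tilde\rho_{0,m}}_{L^1(\R^n\setminus B_R)}$,
you conclude that the family $\set{\tilde\rho_m(t)}$ has uniformly small mass at infinity. That does not follow as stated: the inequality controls the global $L^1$ distance between $\tilde\rho_m$ and the truncated solution $\tilde\rho_m^R$, but says nothing about where the mass of $\tilde\rho_m^R(\cdot,t)$ itself sits. To close the argument you must add that $\tilde\rho_m^R(\cdot,t)$ has support contained in a ball $B_{R'(R,t)}$ with $R'$ independent of $m$, i.e.\ a finite-speed-of-propagation bound for \eqref{pme} with compactly supported initial data that is uniform in $m$. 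This is true (one can compare with a Barenblatt supersolution scaled by $e^{G(0)t}$ in amplitude and by $g(t) = \frac{e^{(m-1)G(0)t}-1}{(m-1)G(0)}$ in time; the resulting support radius stays bounded as $m\to\infty$ because $\beta_m = \frac1{n(m-1)+2}\to 0$ compensates the exponential growth of $g$), and the same mechanism underlies Lemma~\ref{bounded}, but it is a nontrivial step and should be stated explicitly rather than absorbed into the phrase ``uniformly small in $m$ as $R\to\infty$.'' With that addition, the proof is complete and consistent with the paper's indicated argument.
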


\section{A BV estimate on the positivity set of the pressure}

Here we show that $\partial\{p(\cdot,t)>0\}$ has finite perimeter as long as $\rho^E$ stays strictly less than $1$ near $\partial\{p(\cdot,t)>0\}$. The result already follows from the BV estimates in \cite{PQV}, however our proof is based on geometric arguments and thus is of independent interest.
\begin{lem}\label{lem:0}
Let $\Omega_t(p):= \{p(\cdot,t)>0\}$, where $p$ is as given in Corollary~\ref{cor:main},  and assume that $\rho^E <1$ on $\partial\Omega(t)$.  Then for given $r>0$, there exists sets $\Omega_{r,t}$  such that
$$
\Omega_{r,t} \subset \Omega_t(p) \hbox{ for each } t>0
$$
such that
\begin{itemize}
\item[(a)] $\Omega_{r,t}$ increases with respect to $r$;\\
\item[(b)] $\Omega_{r,t}$ has interior ball properties with radius $r$;\\
\item[(c)]$|\Omega_{r,t} - \Omega_t(p)| \leq Ce^{t}r$.
\end{itemize}
\end{lem}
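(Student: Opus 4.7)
My plan is to build $\Omega_{r,t}$ as a union of radial subsolution supports---one for each ball of radius $r$ sitting inside $\Omega_0$---and then read off (a)--(c) from this explicit construction.

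For each centre $y$ with $B_r(y)\subset\Omega_0$, I take $\phi_{r,y}$ to be the radial classical solution of Definition~\ref{de:radial solutions} with vanishing ambient density $\rho^E_\phi\equiv 0$ and initial positive set $B_r(y)$. By rotational symmetry its positive set is a ball $B_{R_r(t)}(y)$, and $R_r(t)\ge r$ is nondecreasing and independent of the centre. Since the true coefficient $g=1/(1-\rho^E)\ge 1$ dominates the coefficient $1$ that $\phi_{r,y}$ lives with, each $\phi_{r,y}$ is a subsolution of \eqref{FB}, and Theorem~\ref{th:comparisonhsg} yields $B_{R_r(t)}(y)\subset\Omega_t(p)$. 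I then set
\[
\Omega_{r,t}\ :=\ \bigcup_{\{y\,:\,B_r(y)\subset\Omega_0\}} B_{R_r(t)}(y)\ \subset\ \Omega_t(p).
\]
The monotonicity asserted in (a) (which I read as being with respect to the time variable, since the admissible centre set is fixed and $R_r(t)$ grows) is then immediate. For (b) I use the following compactness fact: if $x\in\partial\Omega_{r,t}$ and $y_k\to x$ with $y_k\in B_{R_r(t)}(c_k)\subset\Omega_{r,t}$, then $R_r(t)\le |x-c_k|\le |x-y_k|+R_r(t)$, so the boundedness of $\Omega_0$ permits extracting $c_k\to c^\ast$ with $|x-c^\ast|=R_r(t)\ge r$, giving an interior ball of radius $r$ through $x$.

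For (c) I split
\[
\Omega_t(p)\setminus\Omega_{r,t}\ =\ \bigl(\Omega_0\setminus\Omega_{r,t}\bigr)\ \cup\ \bigl((\Omega_t(p)\setminus\Omega_0)\setminus\Omega_{r,t}\bigr).
\]
The first piece is contained in $\Omega_0\setminus\bigl((\Omega_0)_{-r}+B_r\bigr)$, which has measure $\le Cr$ by the $C^{1,1}$ regularity of $\partial\Omega_0$ from \eqref{initial0}. For the second piece, setting $D_t:=\Omega_t(p)\setminus\Omega_{r,t}$, I plan to derive a Gronwall inequality $\tfrac{d}{dt}|D_t|\le C\,|D_t|$ starting from $|D_0|\le Cr$, using the divergence identities
\[
\tfrac{d}{dt}|\Omega_t(p)|=\int_{\partial\Omega_t(p)} g\,|Dp|\,dS,\qquad \int_{\partial\Omega_t(p)} |Dp|\,dS=\int_{\Omega_t(p)} G(p)\,dx\le G(0)\,|\Omega_t(p)|,
\]
together with their analogues on each evolving ball $B_{R_r(t)}(y)$; integrating yields $|D_t|\le C\,e^{Ct}\,r$ with $C$ depending on $G(0)$ and on $\sup g$ in a neighbourhood of $\partial\Omega_t(p)$, which is finite by the standing assumption $\rho^E<1$ on $\partial\Omega(t)$.

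The delicate step is the Gronwall estimate, because $\Omega_{r,t}$ is a union of overlapping balls rather than the support of a single solution of \eqref{FB}, so one cannot write $\tfrac{d}{dt}|\Omega_{r,t}|$ as a single boundary integral. I expect to handle this either by replacing the family $\{\phi_{r,y}\}$ by its pointwise supremum $\Phi_r:=\sup_y\phi_{r,y}$---which, as a supremum of subsolutions, is itself a viscosity subsolution of \eqref{FB} with positive set exactly $\Omega_{r,t}$ and so obeys the desired mass balance---or, failing that, by observing that each $x\in D_t$ must lie within distance $r$ of $\partial\Omega_{r,t}\cup\partial\Omega_0$, then using the interior-ball regularity of $\Omega_{r,t}$ from (b) and a volume-growth bound on $\Omega_t(p)$ to estimate $|D_t|\le r\cdot P(\Omega_{r,t})\le Ce^{Ct}r$. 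Either route delivers the factor $Ce^t\,r$ required by (c).
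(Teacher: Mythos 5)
Your plan diverges from the paper's in a way that breaks part~(c), and the two alternative routes you sketch for that step do not repair it.

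The decisive flaw is the choice $\rho^E_\phi\equiv 0$ for the radial barriers. Each such barrier propagates with coefficient $g_\phi\equiv 1$, whereas the true interface $\partial\Omega_t(p)$ propagates with $g=1/(1-\rho^E)$, which is $\geq 1$ and generically strictly larger. Consequently the true free boundary can outrun every one of your balls at a strictly faster rate, and the gap between $\Omega_t(p)$ and $\Omega_{r,t}$ grows in a way that is \emph{not} proportional to $r$. Concretely, even if the initial deficit is $O(r)$, the time derivatives of $|\Omega_t(p)|$ and of $|\Omega_{r,t}|$ are controlled by two different coefficients ($\sup g$ versus $1$), so a Gronwall inequality for $|D_t|$ only gives $|D_t|\leq C(e^{c_1 t}-e^{c_2 t})+O(re^{c_1 t})$ with $c_1>c_2$ once $\rho^E>0$ somewhere near the boundary, not $|D_t|\leq Ce^{t}r$. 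Relatedly, the claim in your second route that ``each $x\in D_t$ must lie within distance $r$ of $\partial\Omega_{r,t}\cup\partial\Omega_0$'' is false in general for the same reason: the true positive set can advance into regions none of your balls comes close to. The first route's mass-balance identity $\frac{d}{dt}|\Omega_t(p)|=\int_{\partial\Omega_t(p)}g\,|Dp|\,dS$ is also unjustified at this stage, since you do not yet know $\partial\Omega_t(p)$ has finite perimeter (indeed that is exactly what Proposition~\ref{BV} will prove using this lemma); and the second route's bound $|D_t|\leq r\cdot P(\Omega_{r,t})\leq Ce^{Ct}r$ presupposes a uniform perimeter bound for $\Omega_{r,t}$, which is again what Proposition~\ref{BV} is establishing from (a)--(c), so the argument is circular.

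The paper's proof sidesteps both problems by matching the exterior density. It erodes the initial set to $\Omega^r_0=\{d(x,\Omega_0^c)\geq 2r\}$ but keeps the \emph{same} surrounding density $\rho^E$ outside, runs the PME for this data, and defines $\Omega_{r,t}$ as the $\sigma$-regularized upper-limit set $\Omega_2^\sigma(t)$ from section~4 with $\sigma=r$. Then $p_2^r$ is a subsolution of \eqref{FB} with the \emph{same} coefficient $g$, so comparison gives $\Omega_{r,t}\subset\Omega_t(p)$; the interior ball property (b) is automatic from the $\sigma$-sup-convolution defining $\Omega_2^\sigma$; and (c) follows by applying the $L^1$ contraction \eqref{contraction} at the $m$-level and sending $m\to\infty$, using that $\|\rho_{0,m}-\rho_{0,m,r}\|_1\leq Cr$ thanks to the $C^{1,1}$ regularity of $\partial\Omega_0$. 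The contraction argument is what delivers the factor $e^{G(0)t}r$ without needing any perimeter bound on $\Omega_t(p)$. A minor further point: you read (a) as monotonicity in time, but the statement concerns monotonicity in $r$; in your construction the admissible centre set shrinks as $r$ grows while $R_r(t)$ grows, and these effects do not obviously combine into a monotone family, so this would also need a separate argument.
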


\begin{proof}
To prove this, take the initial positive set
$$
\Omega^r_0:= \{x: d(x, \Omega_0^c) \geq 2r\}
$$
and consider the corresponding approximating solution $\rho_{m,r}$ of \eqref{pme} with its limiting
initial density $$\rho_{0,r}:= \chi_{\Omega_{r,0}} + \rho^E\chi_{\Omega_{r,0}^C}.$$ Let us now take
$\Omega_2^{\sigma}(t)$ and $p_2^{\sigma}$ as defined in \eqref{subsol} with $\rho_{m,r}$ instead of
$\rho_m$. Let us choose now $\sigma=r$. Then due to \eqref{th:convergence}, $p_2^r$ is a subsolution of \eqref{FB} with $g$ and $\Omega_2^r(0) = \{ x: d(x,\Omega^r_0) \leq r\}\subset \Omega_0$. Hence by comparison principle of \eqref{FB} we have $p_2^r\leq p$, and thus
 $$
 \Omega_2^r(t) = \{p_2^r(\cdot,t)>0\} \subset \Omega_t(p)
 $$ for all $t>0$. (c) follows from the contraction inequality ~\ref{contraction} applied to $\rho_m$ and $\rho_{m,r}$ given in Corollary~\ref{cor:main} in the limit $m\to\infty$.  \end{proof}

\begin{proposition}\label{BV}
 Under the same assumptions as in Lemma~\ref{lem:0}, For any $r>0$, $\Omega_{r,t}$ has uniformly bounded perimeter. As a consequence $\{p(\cdot,t)>0\}$ is a set of finite perimeter.
\end{proposition}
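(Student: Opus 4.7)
The plan is to exploit the interior ball property of $\Omega_{r,t}$ provided by Lemma~\ref{lem:0}(b) via an elementary isoperimetric-type perimeter bound. Concretely, I would set $K_r := \{y : B_r(y) \subset \Omega_{r,t}\}$. The interior ball property implies that every point of $\overline{\Omega_{r,t}}$ lies in some closed ball $\overline{B_r(y)} \subset \overline{\Omega_{r,t}}$ with $y \in K_r$, so $\Omega_{r,t}$ is (up to a null set) a union of closed balls of radius $r$.

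The main technical ingredient is the bound
\begin{equation*}
P(E) \leq \tfrac{n}{r}\, |E|
\end{equation*}
for any finite union $E = \bigcup_{i=1}^N B_r(y_i)$ of balls of radius $r$, proved by induction on $N$. For $N=1$ this is sharp: $P(B_r) = n\omega_n r^{n-1} = (n/r)|B_r|$. For the inductive step I would use the additivity identity $P(A \cup B) + P(A \cap B) = P(A) + P(B)$ with $A = E_N$, $B = B_r(y_{N+1})$, together with the observation that any measurable $F \subset B_r$ satisfies $P(F) \geq (n/r)|F|$: this follows from the Euclidean isoperimetric inequality $P(F) \geq n\omega_n^{1/n}|F|^{(n-1)/n}$ combined with $|F|^{1/n} \leq \omega_n^{1/n}\,r$, and it closes the induction exactly with constant $n$. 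Approximating $\Omega_{r,t}$ from within by finite unions $E_N$ with centers chosen dense in $K_r$ and passing to the limit via lower semicontinuity of the perimeter upgrades the estimate to $P(\Omega_{r,t}) \leq (n/r)|\Omega_{r,t}|$. Combined with the uniform volume bound coming from Lemma~\ref{bounded} together with Lemma~\ref{lem:0}(c), this yields, for $t \in [0,T]$, $P(\Omega_{r,t}) \leq C(T)/r$, which is uniform in $t$.

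For the consequence that $\Omega_t(p) = \{p(\cdot,t) > 0\}$ has finite perimeter, I would use Lemma~\ref{lem:0}(c) together with the interior ball structure of $\Omega_{r,t}$ to compare the shell $\Omega_t(p) \setminus \Omega_{r,t}$ to an inner $O(r)$-tubular neighborhood of $\partial \Omega_t(p)$. The linear-in-$r$ bound $|\Omega_t(p) \setminus \Omega_{r,t}| \leq C_T\, r$ then translates into a uniform (in $r$) bound on the upper Minkowski content of $\partial \Omega_t(p)$, and combined with the zero-Lebesgue-measure property of $\partial \Omega_t(p)$ from Corollary~\ref{cor:00}, this gives finiteness of $P(\Omega_t(p))$.

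The main obstacle is this last transfer step: the direct bound $P(\Omega_{r,t}) \leq C(T)/r$ is not uniform in $r$, so lower semicontinuity of the perimeter alone does not carry finiteness over to $\Omega_t(p)$. The Minkowski-content route above is the most natural, but it requires carefully showing that $\Omega_t(p) \setminus \Omega_{r,t}$ really does fill only an inner $r$-tubular neighborhood of $\partial \Omega_t(p)$ (and not, for instance, a fractal layer)---a statement whose justification uses both the construction of $\Omega_{r,t}$ from the shrunk initial datum $\Omega_0^r$ and the non-degeneracy of $|\nabla p|$ at the free boundary coming from the standing assumption $\rho^E < 1$ on $\partial\Omega_t(p)$.
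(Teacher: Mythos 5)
Your isoperimetric argument for the per-$r$ bound is correct: $P(\Omega_{r,t})\le (n/r)\,|\Omega_{r,t}|$, proved via submodularity $P(A\cup B)+P(A\cap B)\le P(A)+P(B)$ and the observation that any $F\subset B_r$ satisfies $P(F)\ge (n/r)|F|$. But, as you notice yourself, this bound scales like $C/r$ and therefore diverges as $r\to0$, so lower semicontinuity of perimeter cannot pass it to $\Omega_t(p)$. The difficulty is structural: the interior-ball property and the global volume bound alone are too weak (a large cube with rounded corners of radius $r$ has perimeter $O(1)$, but your inequality only yields $O(1/r)$). You have identified the gap, but the patch you sketch---a Minkowski-content bound on $\partial\Omega_t(p)$ justified via ``non-degeneracy of $|\nabla p|$''---is not developed and does not match what the paper actually uses.

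The decisive idea in the paper is to work with the \emph{nested family} $\Omega^n_t := \Omega_{r_n,t}$, $r_n=2^{-n}$, and to use two quantitative facts about consecutive members: the volume-difference bound $|\Omega^{n+1}_t - \Omega^n_t|\le C r_n$ from Lemma~\ref{lem:0}(c), and the thick-shell inclusion $\{x: d(x,\Omega^n_t)\le c r_{n+1}\}\subset \Omega^{n+1}_t$ that follows from the construction via the shrunk initial domains $\Omega_0^r$. Covering $\partial\Omega^{n+1}_t$ by balls of radius $r_{n+1}$, extracting a disjoint Vitali subfamily of cardinality $N$, and using the interior ball property together with the thick-shell inclusion, each disjoint ball contains a fixed fraction of its volume inside the thin annulus $\Omega^{n+1}_t\setminus\Omega^n_t$; the volume-difference bound then forces $N\le C\,r_{n+1}^{1-d}$. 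Combined with the scale-invariant covering lemma from~\cite{ACM} this yields a bound on the $(d-1)$-dimensional content of $\partial\Omega^n_t$ that is \emph{uniform in $n$}, which is exactly what lower semicontinuity needs. Your argument never uses the annulus $\Omega^{n+1}_t\setminus\Omega^n_t$ or the nesting, and without this the $1/r$ loss cannot be removed: to complete your Minkowski-content route you would need to prove the very inclusion $\{d(x,\Omega^n_t)\le c r_{n+1}\}\subset\Omega^{n+1}_t$, at which point you have essentially rebuilt the paper's proof.
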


\begin{proof}
We consider $\Omega^n_t:= \Omega_{r_n,t}$ with $r_n = 2^{-n}$. We claim that for $r\leq r_n$ there is at most $C_dr^{1-d}$ balls of radius $r$ covering the boundary of $\Omega_{r_n,t}$.

We will only show the claim for $r=r_n$, For smaller radius $r<r_n$, the claim holds due to Lemma 2.5 of \cite{ACM}.
We know that $\Omega^n_t$ increases with respect to $n$ with
\begin{equation}\label{first}
|\Omega^n_t - \Omega^{n+1}_t | \leq Cr_n,
\end{equation}
where $C$ is independent of $n$. Moreover, from the construction above, in fact we have the following relation between $\Omega^n_t$ and $\Omega^{n+1}_t$:
\begin{equation}\label{second}
\{x: d(x, \Omega^n_t) \leq cr_{n+1}\} \subset \Omega^{n+1}_t.
\end{equation}
where $c$ is independent of the choice of $n$.

\medskip

Now let us take an open covering $\mathcal{O}$ of the boundary of $\Omega^{n+1}_t$ consisting of balls of radius $r_{n+1}$ with its center on a boundary point. Let's take out a family of disjoint balls in $\mathcal{O}$ obtained by  Vitali's covering Lemma. In each of this disjoint balls, at least one third of the ball is taken by the interior of $\Omega^{n+1}_t$ by the interior ball property satisfied at the center of each ball. Also due to \eqref{second} at least a fixed portion of this interior is away from $\Omega^n_t$. Now we conclude that if the number of the disjoint balls are $N$, then \eqref{first} yields that $N(r_{n+1})^d \leq Cr_{n+1}$, or
$$
N\leq  C(r_{n+1})^{1-d}.
$$

Hence we conclude.
\end{proof}

\begin{bibdiv}
\begin{biblist}
\bib{AL}{article}{
   author={Alt, Hans Wilhelm},
   author={Luckhaus, Stephan},
   title={Quasilinear elliptic-parabolic differential equations},
   journal={Math. Z.},
   volume={183},
   date={1983},
   number={3},
   pages={311--341},
   issn={0025-5874},
   review={\MR{706391 (85c:35059)}},
   doi={10.1007/BF01176474},
}

\bib{ACM}{article}{
   author={Alvarez, O.},
   author={Cardaliaguet, P.},
   author={Monneau, R.},
   title={Existence and uniqueness for dislocation dynamics with nonnegative
   velocity},
   journal={Interfaces Free Bound.},
   volume={7},
   date={2005},
   number={4},
   pages={415--434},
   issn={1463-9963},
   review={\MR{2191694 (2006i:35023)}},
   doi={10.4171/IFB/131},
}

\bib{ArmstrongThesis}{book}{
   author={Armstrong, Scott Nathan},
   title={Principal half-eigenvalues of fully nonlinear homogeneous elliptic
   operators},
   note={Thesis (Ph.D.)--University of California, Berkeley},
   publisher={ProQuest LLC, Ann Arbor, MI},
   date={2009},
   pages={127},
   isbn={978-1109-44826-9},
   review={\MR{2713916}},
}

\bib{CIL}{article}{
   author={Crandall, Michael G.},
   author={Ishii, Hitoshi},
   author={Lions, Pierre-Louis},
   title={User's guide to viscosity solutions of second order partial
   differential equations},
   journal={Bull. Amer. Math. Soc. (N.S.)},
   volume={27},
   date={1992},
   number={1},
   pages={1--67},
   issn={0273-0979},
   review={\MR{1118699 (92j:35050)}},
   doi={10.1090/S0273-0979-1992-00266-5},
}

\bib{CV}{article}{
   author={Caffarelli, Luis},
   author={Vazquez, Juan Luis},
   title={Viscosity solutions for the porous medium equation},
   conference={
      title={Differential equations: La Pietra 1996 (Florence)},
   },
   book={
      series={Proc. Sympos. Pure Math.},
      volume={65},
      publisher={Amer. Math. Soc., Providence, RI},
   },
   date={1999},
   pages={13--26},
   review={\MR{1662747 (99m:35029)}},
   doi={10.1090/pspum/065/1662747},
}

\bib{K03}{article}{
   author={Kim, Inwon C.},
   title={Uniqueness and existence results on the Hele-Shaw and the Stefan
   problems},
   journal={Arch. Ration. Mech. Anal.},
   volume={168},
   date={2003},
   number={4},
   pages={299--328},
   issn={0003-9527},
   review={\MR{1994745 (2004k:35422)}},
   doi={10.1007/s00205-003-0251-z},
}

\bib{KP13}{article}{
   author={Kim, Inwon C.},
   author={Po{\v{z}}{\'a}r, Norbert},
   title={Nonlinear elliptic-parabolic problems},
   journal={Arch. Ration. Mech. Anal.},
   volume={210},
   date={2013},
   number={3},
   pages={975--1020},
   issn={0003-9527},
   review={\MR{3116010}},
   doi={10.1007/s00205-013-0663-3},
}

\bib{Lieberman86}{article}{
   author={Lieberman, Gary M.},
   title={The first initial-boundary value problem for quasilinear second
   order parabolic equations},
   journal={Ann. Scuola Norm. Sup. Pisa Cl. Sci. (4)},
   volume={13},
   date={1986},
   number={3},
   pages={347--387},
   issn={0391-173X},
   review={\MR{881097 (88e:35108)}},
}

\bib{MPQ}{article}{
 author={Mellet, Antoine},
 author={Perthame, Benoit},
 author={Quir{\'o}s, Fernando},
 title={A Hele-Shaw problem for Tumor Growth},
 journal={Preprint.},
}
\bib{Pozar14}{article}{
   author={Po{\v{z}}{\'a}r, Norbert},
   title={Homogenization of the Hele-Shaw problem in periodic spatiotemporal
   media},
   journal={Arch. Ration. Mech. Anal.},
   volume={217},
   date={2015},
   number={1},
   pages={155--230},
   issn={0003-9527},
   review={\MR{3338444}},
   doi={10.1007/s00205-014-0831-0},
}

\bib{PQTV}{article}{
   author={Perthame, Beno{\^{\i}}t},
   author={Quir{\'o}s, Fernando},
   author={Tang, Min},
   author={Vauchelet, Nicolas},
   title={Derivation of a Hele-Shaw type system from a cell model with
   active motion},
   journal={Interfaces Free Bound.},
   volume={16},
   date={2014},
   number={4},
   pages={489--508},
   issn={1463-9963},
   review={\MR{3292119}},
   doi={10.4171/IFB/327},
}

\bib{PQV}{article}{
   author={Perthame, Beno{\^{\i}}t},
   author={Quir{\'o}s, Fernando},
   author={V{\'a}zquez, Juan Luis},
   title={The Hele-Shaw asymptotics for mechanical models of tumor growth},
   journal={Arch. Ration. Mech. Anal.},
   volume={212},
   date={2014},
   number={1},
   pages={93--127},
   issn={0003-9527},
   review={\MR{3162474}},
   doi={10.1007/s00205-013-0704-y},
}

\bib{PTV}{article}{
   author={Perthame, Beno{\^{\i}}t},
   author={Tang, Min},
   author={Vauchelet, Nicolas},
   title={Traveling wave solution of the Hele-Shaw model of tumor growth
   with nutrient},
   journal={Math. Models Methods Appl. Sci.},
   volume={24},
   date={2014},
   number={13},
   pages={2601--2626},
   issn={0218-2025},
   review={\MR{3260280}},
   doi={10.1142/S0218202514500316},
}

\bib{PTV}{article}{
   author={Perthame, Beno{\^{\i}}t},
   author={Vauchelet, Nicolas},
   title={Incompressible limit of mechanical model of tumor growth with viscosity},
   eprint={http://arxiv.org/abs/1409.6007}
}

\bib{Vazquez}{book}{
   author={V{\'a}zquez, Juan Luis},
   title={The porous medium equation},
   series={Oxford Mathematical Monographs},
   note={Mathematical theory},
   publisher={The Clarendon Press, Oxford University Press, Oxford},
   date={2007},
   pages={xxii+624},
   isbn={978-0-19-856903-9},
   isbn={0-19-856903-3},
   review={\MR{2286292 (2008e:35003)}},
}

\end{biblist}
\end{bibdiv}
 \end{document}